\newtheorem{thm}{Theorem}[section]
\newtheorem{prop}[thm]{Proposition}
\newtheorem{lem}[thm]{Lemma}
\newtheorem{cor}[thm]{Corollary}
\theoremstyle{definition}
\newtheorem{exm}[thm]{Example}
\newtheorem{defn}[thm]{Definition}
\theoremstyle{remark}
\newtheorem{remk}[thm]{Remark}
\newtheorem{remks}[thm]{Remarks}
\newtheorem{exms}[thm]{Examples}
\newtheorem{notat}[thm]{Notation}
\newtheorem{ack}{Acknowledgements}
\numberwithin{equation}{section}
\newcommand{\sC}{{\mathcal C}}
\newcommand{\sF}{{\mathcal F}}
\newcommand{\sG}{{\mathcal G}}
\newcommand{\sN}{{\mathcal N}}
\newcommand{\sO}{{\mathcal O}}
\newcommand{\sZ}{{\mathcal Z}}
\newcommand{\wt}{\widetilde}
\newcommand{\A}{{\mathbb A}}
\newcommand{\C}{{\mathbb C}}
\newcommand{\G}{{\mathbb G}}
\newcommand{\Q}{{\mathbb Q}}
\newcommand{\R}{{\mathbb R}}
\newcommand{\Z}{{\mathbb Z}}
\newcommand{\fm}{{\mathfrak m}}
\newcommand{\fg}{{\mathfrak g}}
\newcommand{\inj}{\hookrightarrow}
\newcommand{\Wedge}{{\Lambda}}
\newcommand{\ds}{{/\kern-3pt/}}
\newcommand{\ov}{\overline}
\newcommand{\wh}{\widehat}
\begin{document}
\title{Completions of Higher Equivariant $K$-theory}
\author{Amalendu Krishna}

\keywords{Equivariant $K$-theory, Higher Chow groups, Algebraic groups}

\subjclass{Primary 14C40, 14C35; Secondary 14C25}
\baselineskip=10pt 
                                              
\begin{abstract}
For a linear algebraic group $G$ acting on a smooth variety $X$ over an
algebraically closed field $k$ of characteristic zero, we prove 
a version of non-abelian localization theorem for the rational
higher equivariant $K$-theory of $X$. This is then used to 
establish a Riemann-Roch theorem for the completion of the
rational higher equivariant K-theory at a maximal ideal of
the representation ring of $G$. 
\end{abstract}

\maketitle   

\section{Introduction}
Ever since Grothendieck proved his very general Riemann-Roch theorem,
the Riemann-Roch problem has by now become one of the most important tools
in algebraic geometry for studying the sheaves and bundles on varieties
and analytic spaces in terms of the various cohomology theories on the
variety such as the motivic and the singular cohomology. The well known
Riemann-Roch theorem of Baum, Fulton and MacPherson \cite{BFM} states 
that the Grothendieck group of coherent sheaves on an algebraic variety
has a natural isomorphism with the Chow groups of algebraic cycles on
the variety, when these groups are taken with the rational coefficients.
This result was later generalized by Bloch \cite{Bloch} when he founded
the theory of higher Chow groups and showed that these higher Chow groups
form a cohomology theory. Moreover, when considered with the 
rational coefficients, they completely describe the higher K-theory of 
coherent sheaves of a quasi-projective variety. 

When a linear algebraic group $G$ acts on a variety $X$, it becomes very 
natural to ask if the Grothendieck group or more generally, the $K$-theory 
of the equivariant coherent sheaves has similar description in terms of the 
geometry of the variety. Such a description becomes important even from the 
point of view of the representation theory of the group and gives a more 
concrete and geometric way of understanding the representation theory of 
linear algebraic groups. However, it turns out, as is evident from the work 
of Edidin and Graham \cite{ED0}, that if at all there is one, such a 
description can not be as direct as in the non-equivariant case. The basic 
reason for this is that the equivariant (higher) Chow groups are always 
complete with respect to certain linear topology while this is certainly not 
the case with the equivariant K-theory.

A {\sl variety} in this paper will mean a reduced, connected and
separated scheme of finite type with an ample line bundle over a field $k$.
This base field $k$ will be fixed throughout this paper
and will be assumed to be algebraically closed and of characteristic zero
(except in Section~2, where $k$ is arbitrary).
Let $G$ be a connected linear algebraic group over $k$ acting on such a 
variety $X$. Recall that this action on $X$ is said to be linear if $X$ 
admits a $G$-equivariant ample line bundle, a condition which is always 
satisfied if $X$ is normal \cite[Theorem~2.5]{Sumihiro}. All $G$-actions in 
this paper will be assumed to be linear.  
For $i \ge 0$, let $K^G_i(X)$ (resp. $G^G_i(X)$) denote the $i$th 
homotopy group of the $K$-theory spectrum of $G$-equivariant vector 
bundles (resp. coherent sheaves) on $X$. The $G$-equivariant higher Chow
groups $CH^j_G(X, i)$ of $X$ were defined by Edidin and Graham 
({\sl cf.} \cite{ED2}, also see Section~5 for more detail) as the ordinary 
higher Chow groups of the quotient space $X \stackrel{G}{\times} U$, where
$U$ is an open subscheme of a representation of $G$ on which the action of
$G$ is free, and its complement is of sufficiently high codimension.
The representation ring $R(G)$ of $G$ is simply the group $K^G_0(k)$. 
The ring structure is given by the tensor
product of representations. In this paper, all the (equivariant) 
$K$-groups and higher Chow groups will be considered with the rational
coefficients. In particular, the representation ring $R(G)$ will actually
mean $R(G){\otimes}_{\Z} {\Q}$. For any $\Q$-algebra $A$ and a $\Q$-vector 
space $V$, the group $V_A$ will mean the group $V {\otimes}_{\Q} A$. These 
notations will be very often employed for the equivariant $K$-groups and higher
Chow groups. 

In view of the completeness of the product of equivariant Chow groups as 
stated above, Edidin and Graham showed in {\sl loc. cit.} that if 
$G^G_0(X)$ is completed with respect to the maximal ideal of the the 
virtual rank zero representations in $R(G)$, then this completion is 
isomorphic to the product of the equivariant Chow groups via a 
Riemann-Roch map. This was the first result of Riemann-Roch type in the
equivariant $K$-theory. Subsequently in \cite{ED3}, Edidin and Graham
studied the problem of describing the completion of $G^G_0(X)$ with
respect to any given maximal ideal of $R(G)$ in terms of algebraic cycles,
in the case when the base field is $\C$ and all the Grothendieck groups,
Chow groups and the representation rings are considered with the complex 
coefficients. 
To state their result in a concise form, recall from \cite{ED1} that if
$G$ is a complex algebraic group, every maximal ideal 
${\mathfrak m}_{\Psi}$ of $R_{\C}(G)$ is given as the ideal of the 
representations whose characters vanish at a unique semi-simple conjugacy 
class $\Psi$ in $G$. 
\begin{thm}[Edidin-Graham]\label{thm:ED} Let $G$ act on a complex 
algebraic space $X$. For any maximal ideal ${\mathfrak m}_{\Psi}$ of
$R_{\C}(G)$, let $Z$ denote the centralizer of an element $h$ of 
$\Psi$ and let $X^h$ be the fixed subspace of $X$ for $h$.
Then there is a Riemann-Roch isomorphism
\[
{\tau}^{\Psi}_X : 
{\widehat{\left(G^G_0(X)_{\C}\right)}}_{{\mathfrak m}_{\Psi}} 
\to \stackrel{\infty}{\underset {j=0}{\prod}}{CH^j_Z(X^h)}_{\C}.
\]    
Moreover, for smooth $X$, this map is described in terms of 
Chern characters and Todd classes.
\end{thm}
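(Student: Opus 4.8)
The plan is to deduce $\tau^{\Psi}_X$ from the Riemann--Roch isomorphism that Edidin and Graham had already established at the \emph{augmentation} ideal: for any $G$ acting on a variety $Y$,
\[
{\widehat{\left(G^G_0(Y)_{\C}\right)}}_{\mathfrak{m}_1} \;\xrightarrow{\ \sim\ }\; \prod_{j\ge 0} CH^j_G(Y)_{\C},
\]
where $\mathfrak{m}_1\subset R_{\C}(G)$ is the ideal of virtual rank-zero representations and the map is built from the equivariant Chern character. So the task is to move from the completion at $\mathfrak{m}_{\Psi}$ to a completion at an augmentation ideal, and this is carried out in two stages: a concentration on the fixed locus, followed by an untwist.

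\textbf{First (concentration on the fixed locus).} Fix a semisimple $h\in\Psi$, let $S=\overline{\langle h\rangle}$ be the diagonalizable group it generates, and put $Z=Z_G(h)=Z_G(S)$, so $X^h=X^S$. Thomason's concentration theorem for the action of the diagonalizable group $S$ shows that the closed immersion $i\colon X^h\hookrightarrow X$ induces an isomorphism on suitably localized $S$-equivariant $G$-theory, the point being that the $K$-theory Euler class $\lambda_{-1}(N^\vee)$ of the normal bundle $N=N_{X^h/X}$ becomes a unit after localizing at a prime lying over $\mathfrak{m}_{\Psi}$, since $h$ acts on $N$ with no trivial eigencharacter. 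Propagating this from $S$ to $G$ uses the non-abelian localization package: the Morita identification $G^G_*(G\times^Z W)\cong G^Z_*(W)$ for $Z$-schemes $W$, together with a Luna-slice description of the $G$-action in a formal neighborhood of $X^h$, which shows that $R_{\C}(G)$-locally at $\mathfrak{m}_{\Psi}$ a neighborhood of $X^h$ in $X$ is induced from $Z$. The outcome is a natural isomorphism $G^G_*(X)_{\mathfrak{m}_{\Psi}}\cong G^Z_*(X^h)_{\mathfrak{n}}$, where $\mathfrak{n}=\mathfrak{m}_{\Psi}\cap R_{\C}(Z)$ is the maximal ideal of $R_{\C}(Z)$ cut out by $h$, now a \emph{central} element of $Z$.

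\textbf{Next (untwisting the central element).} Because $S$ is central in $Z$, every $Z$-equivariant coherent sheaf on $X^h$ carries a locally constant $S$-action and hence splits into $S$-eigensheaves; dually, $\widehat{R_{\C}(Z)}_{\mathfrak{n}}$ decomposes according to characters of $S$, and $h$ is a unit in it. Translation of characters, $\chi\mapsto\chi(h)^{-1}\chi$, then extends to an isomorphism of completed modules ${\widehat{\left(G^Z_0(X^h)_{\C}\right)}}_{\mathfrak{n}}\cong{\widehat{\left(G^Z_0(X^h)_{\C}\right)}}_{\mathfrak{m}_1}$ --- the standard change of support point in Atiyah--Segal-type completions. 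Composing this with the concentration isomorphism of the previous step and with the Edidin--Graham isomorphism above, applied to $Z$ acting on $X^h$, produces $\tau^{\Psi}_X$; naturality in $X$ follows from the naturality of each ingredient.

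\textbf{The smooth case, and the main obstacle.} When $X$ is smooth, $X^h$ is smooth (fixed locus of the linearly reductive $S$), $G$-theory agrees with $K$-theory, and one unwinds the three maps: the Edidin--Graham map on $X^h$ is the equivariant Chern character, the untwist multiplies $h$-eigencomponents by the corresponding powers of $h$, and the Gysin map $i_*$ of the concentration step contributes, by Riemann--Roch for the regular closed immersion $i$, multiplication by the inverse of the $Z$-equivariant Todd/Euler class of $N$ --- the ``localized Todd class'' whose denominators $\lambda_{-1}(N^\vee)$ are invertible precisely by the eigencharacter condition. The genuine difficulty lies in the concentration step: Thomason's theorem applies directly only to the diagonalizable $S$, so passing simultaneously from $S$ to $G$ and from $X$ to $X^h$ demands a careful slice analysis of the $G$-action near the fixed locus and the compatibility of the localizations along $R_{\C}(G)\to R_{\C}(Z)\to R_{\C}(S)$, with the excess-intersection identity $i^*i_*=(-)\cdot\lambda_{-1}(N^\vee)$ as its technical core. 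Everything afterwards is formal $\lambda$-ring bookkeeping together with the already known augmentation-ideal Riemann--Roch theorem.
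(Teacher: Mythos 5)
Your three-step architecture --- concentration at the fixed locus, untwisting by characters of the central element $h$, then invoking the augmentation-ideal Riemann--Roch of Edidin--Graham --- is exactly the strategy the paper attributes to \cite{ED3} (and which Sections~3--5 of the paper adapt to higher $K$-theory). In particular, your ``translation of characters $\chi\mapsto\chi(h)^{-1}\chi$'' is precisely the twist automorphism $t_{\wt{\fm}}$ of Section~5, and your final composition with \cite{ED0} on $Z$ acting on $X^h$ is the right closing move. So the skeleton is faithful.

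Where you diverge from the actual argument is in how the concentration isomorphism $G^G_*(X)_{{\fm}_\Psi}\cong G^Z_*(X^h)_{\mathfrak n}$ is promoted from the diagonalizable $S$ to the non-abelian $G$. You propose Morita plus a Luna-slice analysis of the $G$-action in a formal neighborhood of $X^h$, asserting that locally at ${\fm}_\Psi$ the $G$-space near $X^h$ is induced from $Z$. Neither \cite{ED3} nor this paper proceeds that way, and the slice picture has a real gap: at a point of $X^h$ the isotropy group contains $h$ but need not equal $Z_G(h)$, so the \'etale-local model $G\times^{H}V$ provided by Luna does not directly compare $G$-theory with $Z$-theory near $X^h$, and you give no mechanism for assembling these pointwise slices into a statement about completions at ${\fm}_\Psi$. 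The actual route is purely algebraic, not geometric: for $G$ with simply connected derived subgroup one uses Merkurjev's isomorphism $K^G_i(X)\otimes_{R(G)}R(T)\xrightarrow{\;\sim\;}K^T_i(X)$ to identify $K^G_i(X)$ and $K^Z_i(X)$ with Weyl-group invariants of $K^T_i(X)$, reduces to Thomason's torus localization via the splitting $p^!\colon K^G\to K^T$ coming from $G/T$, and uses Proposition~\ref{prop:weyl} together with the orbit--stabilizer bookkeeping on $\Spec R(T)\to\Spec R(G)$ to match the completions; the general $G$ is then handled by a closed embedding into a product of $GL_n$'s (Lemma~\ref{lem:incl}) and Morita. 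If you want your outline to be a proof, you must either replace the slice step by this representation-theoretic reduction or else supply the missing argument that Luna slices really do produce the required isomorphism on ${\fm}_\Psi$-adic completions.
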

Edidin and Graham prove this result by extending Thomason's 
localization theorem \cite{Thom0} for the algebraic
$K$-theory for the action of diagonalizable groups to the case of
general complex algebraic groups ({\sl cf. loc. cit.}, Theorem~4.3).
This localization theorem implies in particular that in the situation
of Theorem~\ref{thm:ED}, the restriction map of the completions of
the equivariant $K$-theory induces an isomorphism
\begin{equation}\label{eqn:EDLoc}
i^{!} : {\widehat{\left(G^G_i(X)_{\C}\right)}}_{{\mathfrak m}_{\Psi}}    
\to {\widehat{\left(G^Z_i(X^h)_{\C}\right)}}_{h},
\end{equation} 
where the term on the right is the completion of the equivariant $K$-groups
with respect to the maximal ideal corresponding to the conjugacy class
in $Z$ consisting of the single element $h$.

The results of this paper were motivated by some of the questions raised in
\cite{ED3}. The first natural question is the formulation of the 
Edidin-Graham's completion theorem for the rational $K$-theory rather
than the complex $K$-theory of varieties with group actions. The
second and the more important question is whether it is possible to describe
the completion of the higher equivariant $K$-theory with respect to
the various maximal ideals of $R(G)$ in terms of the
algebraic cycles on some subspaces. Our main purpose in this paper is to answer
these questions for the rational equivariant higher $K$-theory of
smooth varieties.

We fix some notations before we state our first result. For any
finitely generated abelian group $N$ and any field $l$ of characteristic
zero, let $T = D_l(N)$ denote the unique split diagonalizable group over $l$ 
whose character group is given by $N$. $D_l(N)$ is the spectrum of the group 
algebra $l[N]$. It is well known that $D_l(N)$ is a torus if and only if $N$ 
has no torsion. There is also a canonical isomorphism of $\Q$-algebras
${\Q}[N] \to R(T)$. By \cite[Lemme~1.1, Proposition~1.2]{Thom0}, every
maximal ideal $\mathfrak m$ of $R(T)$ corresponds to a unique 
diagonalizable closed subgroup $T_{\fm} = D_{l}(N/{N_{\fm}})$ of $T$ where
$N_{\fm}$ is the kernel of the natural map of abelian groups
\[
N \xrightarrow{\phi} {\left({R(T)}/{\fm}\right)}^* \ {\rm given \ by} \ 
n \mapsto [n] \ mod \ {\fm}.
\]
$T_{\fm}$ is called the {\sl support} of $\fm$. Note that $T_{\fm}$ is 
connected if and only if the image of $\phi$ has no torsion. 
Let $G$ be a connected linear algebraic group over $k$ with a fixed maximal 
torus $T = D_k(N)$ of rank $r$. It follows from Proposition~\ref{prop:finiteR}
below that the natural map $R(G) \to R(T)$ of $\Q$-algebras is finite.
In particular, every maximal ideal $\fm$ of $R(G)$ can be lifted to a 
(not necessarily unique) maximal ideal $\wt{\fm}$ of $R(T)$. There is a
distinguished maximal ideal $I_G$ of $R(G)$, which is the ideal of the rank 
zero virtual representations, i.e., $I_G$ is the kernel of the rank map 
$R(G) \to \Q$. It is called the augmentation ideal of $R(G)$.
It is easy to see that ({\sl cf.} Corollary~\ref{cor:maximal2}) $I_T$ is the
only lift of $I_G$ in $R(T)$ and the support of $I_T$ is the identity
subgroup of $T$.

Suppose now that $G$ acts linearly on a smooth quasi-projective variety
$X$ over $k$. It is then well known that the natural map $K^G_*(X) \to 
G^G_*(X)$ is an isomorphism.
Hence we shall not make any distinction between these groups for smooth
varieties in this paper. It is also well known that $K^G_*(X)$ and
$G^G_*(X)$ are naturally modules for the $\Q$-algebra $K^G_0(X)$ and hence 
they are also modules for the $\Q$-algebra $R(G)$. For any maximal ideal 
$\fm$ of a commutative ring $A$ and for any $A$-module $M$, let 
${\widehat{M}}_{\fm}$ denote the $\fm$-adic completion of $M$.
We fix a maximal ideal $\fm$ of $R(G)$ and choose its lift $\wt{\fm}$ as
a maximal ideal of $R(T)$. Let $T_{\wt{\fm}} \subset T$ denote the support 
of $\fm$ and let $Z = Z_G(S)$ denote the centralizer of 
$S = {T_{\wt{\fm}}} {\otimes}_{\Q} k$ in $G$ under the inclusion 
$S \subset T \subset G$.
Let $\ov{\fm}$ be the restriction of $\wt{\fm}$ to $R(Z)$ under the natural
restriction maps $R(G) \to R(Z) \to R(T)$.
Our first main result deals with the problem of representing the  
completions ${\wh{K^G_i(X)}}_{\fm}$ of the equivariant higher $K$-theory
of $X$ in terms of algebraic cycles. For such a variety $X$, put
\[CH^*_G(X,i) = {\underset {j \ge 0}{\bigoplus}} {CH^j_G(X,i)} {\otimes}_{\Z}
{\Q} \ \ {\rm for} \ \
i \ge 0.\]
It is known \cite{ED2} that the term on the right is an infinite sum in 
general. Put $S(G) = CH^*_G(k, 0)$. Then $S(G)$ is a graded $\Q$-algebra 
with $S(G)_0 = CH^0_G(k) = {\Q}$, where the graded ring structure is given 
by the intersection product on the equivariant Chow groups of the classifying 
space $BG$. Moreover, this intersection product and the pullback maps on the 
equivariant Chow groups make $CH^*_G(X, i)$ a graded module for the ring 
$S(G)$. Let $J_G$ denote the maximal ideal 
${\underset {j \ge 1}{\bigoplus}} {CH^j_G(k,0)}$ and let
$\wh{CH^*_G(X,i)}$ denote the $J_G$-adic completion of the $S(G)$-module
$CH^*_G(X,i)$.   
It was shown in \cite[Theorem~1.2]{KV} that if $G$ acts on a smooth
quasi-projective variety $X$, then there is a Chern character
map 
\begin{equation}\label{eqn:KV}
{ch}^X : {\wh{K^G_i(X)}}_{I_G} \to \wh{CH^*_G(X,i)}
\end{equation}
which is an isomorphism. This result was proved for $i = 0$ by Edidin and
Graham \cite[Theorem~4.1]{ED0}. This shows that the completion of the
higher $K$-groups at the augmentation ideal of $R(G)$ can be represented by 
the equivariant higher Chow groups via Chern characters. Our purpose here is 
to give a similar representation of the completion of the rational
equivariant higher $K$-groups at other maximal ideals of the representation 
ring. This in particular extends the result of \cite[Theorem~5.5]{ED3} to the
higher equivariant $K$-theory of smooth varieties and hence answers
a question of Edidin and Graham in {\sl loc. cit.}. The following result
in particular shows that although it is impossible to prove the direct 
equivariant analogue of the Riemann-Roch Theorem of Bloch 
\cite[Theorem~9.1]{Bloch} showing the isomorphism between the rational higher 
$K$-theory and higher Chow groups, it is indeed possible to prove such an 
isomorphism whenever the equivariant higher $K$-groups are completed at a 
given maximal ideal of the representation ring of the underlying group. 
\begin{thm}\label{thm:TRR}
Let the group $G$ act on a smooth quasi-projective variety $X$
and let $\fm$ be a given maximal ideal of $R(G)$. Assume that the support
$T_{\wt{\fm}}$ is a subtorus of the maximal torus $T$ of $G$. 
Let $L = {R(T)}/{\wt{\fm}}$ be the residue field of $\wt{\fm}$ and
let $f: X^{\wt{\fm}} \inj X$ denote the inclusion of the fixed locus for
the action of $T_{\wt{\fm}}$ on $X$. Then for every $i \ge 0$, there is a 
Chern character map
\[
{ch}^X_{\fm} : {\wh{K^G_i(X)}}_{\fm , L} \to
\wh{CH^*_Z(X^{\wt{\fm}},i)}_L
\]
which is an isomorphism. 
\end{thm}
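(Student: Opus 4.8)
The plan is to realize ${ch}^X_{\fm}$ as a composite of three isomorphisms, the last of which is the known Riemann--Roch isomorphism \eqref{eqn:KV} of \cite{KV} at the augmentation ideal. \emph{Step 1 (localization onto the fixed locus).} By \propref{prop:finiteR} the ring map $R(G)\to R(T)$ is finite, so $\fm$ has the lift $\wt\fm$ with residue field $L$, and $\ov\fm$ is its restriction to $R(Z)$, a maximal ideal with residue field $L'\subseteq L$. Since $X$ is smooth and $S=T_{\wt\fm}$ is a subtorus of $T$, the fixed locus $Y:=X^{\wt\fm}=X^{S}$ is a smooth closed subvariety of $X$, the inclusion $f$ is a regular closed immersion, and $T_{\wt\fm}$ acts on the normal bundle $N_f$ without nonzero fixed vectors, so the $K$-theoretic Euler class of $N_f$ becomes invertible after completing at $\fm$ (resp.\ $\ov\fm$). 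I would then invoke the non-abelian localization theorem established earlier in the paper: combining restriction from $G$ to $Z$ with the pullback along $f$ (or the Gysin map $f^{!}$, which differs from $f^{*}f_{*}$ by that Euler class), one obtains for every $i\ge 0$ an isomorphism $\wh{K^G_i(X)}_{\fm}\xrightarrow{\ \sim\ }\wh{K^Z_i(Y)}_{\ov\fm}$, hence $\wh{K^G_i(X)}_{\fm,L}\xrightarrow{\ \sim\ }\wh{K^Z_i(Y)}_{\ov\fm,L}$ after the indicated extension of scalars.

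\emph{Step 2 (translation onto the augmentation ideal).} The torus $S$ is central in $Z=Z_G(S)$ and acts trivially on $Y$, so every $Z$-equivariant coherent sheaf on $Y$ --- and every finite-dimensional representation of $Z$ --- splits into its $S$-isotypic components, which are $Z$-stable because $S$ is central; thus $R(Z)$ and $K^Z_i(Y)$ carry gradings by the character group $\wh S$. The residue map $R(T)\to L$ restricts on $R(S)=\Q[\wh S]$ to the homomorphism determined by $\phi\colon \wh S\to L^{*}$, that is, to evaluation at a point $t_0$ of $S$ with values in $L$; this $t_0$ yields an $L$-linear ``translation'' automorphism $\theta_{t_0}$ of $K^Z_i(Y)\otimes_{\Q}L$, multiplying the $\chi$-isotypic part by $\chi(t_0)\in L^{*}$ and covering the corresponding ring automorphism of $R(Z)\otimes_{\Q}L$, which carries the augmentation ideal $I_Z$ onto $\ov\fm$ --- this last being exactly the classical description of $\ov\fm$ as the ideal of virtual representations whose character vanishes at $t_0$. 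Hence $\theta_{t_0}$ induces an isomorphism $\wh{K^Z_i(Y)}_{I_Z,L}\xrightarrow{\ \sim\ }\wh{K^Z_i(Y)}_{\ov\fm,L}$. The hypothesis that $T_{\wt\fm}$ is connected is used precisely here, so that $S$ is a torus, $Z$ is connected, and $S$ is central in $Z$; in particular $Z$ then satisfies the hypotheses under which \eqref{eqn:KV} holds.

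\emph{Step 3 and conclusion.} Applying \cite[Theorem~1.2]{KV} to the connected group $Z$ acting on the smooth quasi-projective variety $Y=X^{\wt\fm}$ gives an isomorphism ${ch}^{Y}\colon\wh{K^Z_i(Y)}_{I_Z}\xrightarrow{\ \sim\ }\wh{CH^*_Z(Y,i)}$; since $L$ is a finite-dimensional $\Q$-vector space, $-\otimes_{\Q}L$ commutes with the inverse limits defining the $I_Z$- and $J_Z$-adic completions, giving $\wh{K^Z_i(Y)}_{I_Z,L}\xrightarrow{\ \sim\ }\wh{CH^*_Z(X^{\wt\fm},i)}_L$. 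Composing the isomorphisms of Steps 1--3,
\[
\wh{K^G_i(X)}_{\fm,L}\ \xrightarrow{\ \sim\ }\ \wh{K^Z_i(X^{\wt\fm})}_{\ov\fm,L}\ \xrightarrow[{\theta_{t_0}^{-1}}]{\ \sim\ }\ \wh{K^Z_i(X^{\wt\fm})}_{I_Z,L}\ \xrightarrow[{{ch}^{X^{\wt\fm}}}]{\ \sim\ }\ \wh{CH^*_Z(X^{\wt\fm},i)}_L ,
\]
defines ${ch}^X_{\fm}$, which is an isomorphism by construction; unwinding Steps 1 and 2 shows it is given by Chern characters and Todd classes of $N_f$, in the spirit of \thmref{thm:ED}, although only the isomorphism property is asserted. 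The one genuinely hard input is the non-abelian localization theorem of Step 1 --- the main technical contribution of the paper; granting it, the only delicate points left in the present argument are the bookkeeping of coefficients among $\Q$, $L'=R(Z)/\ov\fm$ and $L$ (and of the maximal ideals of $R(Z)\otimes_\Q L$ lying over $\ov\fm$), and the verification that $\theta_{t_0}$ really does match the two completions.
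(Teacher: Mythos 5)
Your proposal follows essentially the same three-step structure as the paper: (1) apply the non-abelian completion theorem (\thmref{thm:NAL}) to pass from $\wh{K^G_i(X)}_{\fm}$ to $\wh{K^Z_i(X^{\wt\fm})}_{\ov\fm}$; (2) use the central action of $S$ on $Z$ and the triviality of the $S$-action on $X^{\wt\fm}$ to build a translation automorphism that exchanges the $\ov\fm$-adic and $I_Z$-adic topologies after tensoring with $L$; (3) invoke the equivariant Riemann--Roch of \cite[Theorem~1.2]{KV} at the augmentation ideal. This is exactly the paper's argument (non-abelian completion, then Corollary~\ref{cor:TRR**} built from Proposition~\ref{prop:Dec1} and Proposition~\ref{prop:match}, then \eqref{eqn:KV}). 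One small bookkeeping slip: you assert that the ring automorphism covering $\theta_{t_0}$ ``carries $I_Z$ onto $\ov\fm$'' and then apply $\theta_{t_0}^{-1}$; in fact the automorphism multiplying the $\chi$-isotypic part by $\chi(t_0)$ carries $\ov\fm$ onto $I_Z$ (if $V$ has $S$-isotypic decomposition $\bigoplus V_\chi$, then $\theta_{t_0}(V)$ has virtual rank $\sum_\chi \chi(t_0)\dim V_\chi = \chi_V(t_0)$, which vanishes precisely when $V\in\ov\fm$), so it is $\theta_{t_0}$ itself, not $\theta_{t_0}^{-1}$, that gives $\wh{K^Z_i(X^{\wt\fm})}_{\ov\fm,L}\to \wh{K^Z_i(X^{\wt\fm})}_{I_Z,L}$ --- matching Corollary~\ref{cor:TRR**}, which sends $\fm^j M$ to $I^j M$. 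This does not affect the validity of the construction. You also correctly flag that some care is needed to justify that the $S$-isotypic decomposition of coherent sheaves is well-defined and $Z$-stable on the non-affine $X^{\wt\fm}$ (this is Proposition~\ref{prop:Dec1} and Lemma~\ref{lem:trivial*} in the paper, the latter extending Thomason's affine result via a Mayer--Vietoris induction using the $T$-invariant affine covering guaranteed by normality and \cite{Sumihiro}).
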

As in the proof of the twisted Riemann-Roch theorem in {\sl loc. cit.}
for the Grothendieck group of equivariant coherent sheaves, we deduce 
the above result by first proving the following version of the non-abelian
completion theorem for the rational equivariant $K$-theory. We first 
generalize the completion result of Edidin and Graham for the complex 
equivariant $K$-theory to the case of $K$-theory with coefficients in the 
base field $k$. This is then used to prove the following result for the
rational equivariant $K$-theory by an extension and descent argument. 
To state the result, we fix a maximal ideal 
$\fm$ of $R(G)$ with a lift $\wt{\fm}$ as a maximal ideal of $R(T)$. 
Let $\ov{\fm}$ be the restriction of $\wt{\fm}$ to $R(Z)$ in the notations
of Theorem~\ref{thm:TRR}. We have the following restriction map of various 
completions.
\begin{equation}\label{eqn:res}
{res}_{\fm} : {\wh{K^G_i(X)}}_{\fm} \to {\wh{K^Z_i(X)}}_{\fm} \to
{\wh{K^Z_i(X)}}_{\ov{\fm}}.
\end{equation}  
\begin{thm}\label{thm:NAL}
Let $G$ act on a smooth quasi-projective variety $X$ and let
$\fm$ be a given maximal ideal of $R(G)$. Assume that the support
$T_{\wt{\fm}}$ is a subtorus of the maximal torus $T$ of $G$ and 
let $f: X^{\wt{\fm}} \inj X$ denote the inclusion of the fixed locus for
the action of $T_{\wt{\fm}}$ on $X$.
Then for every $i \ge 0$, all the horizontal arrows in the commutative
diagram
\begin{equation}\label{eqn:NAL*}
\xymatrix{
{\wt{f}}^!_{\fm} : K^G_i(X) {\otimes}_{R(G)} {\wh{R(G)}}_{\fm}
\ar[d] \ar[r]^{{\wt{res}}_{\fm}} & 
K^Z_i(X) {\otimes}_{R(Z)} {\wh{R(Z)}}_{\ov{\fm}}
\ar[r]^{{\wt{f}}^*} & K^Z_i(X^{\wt{\fm}}) 
{\otimes}_{R(Z)} {\wh{R(Z)}}_{\ov{\fm}} \ar[d] \\
f^!_{\fm} :  {\wh{K^G_i(X)}}_{\fm} \ar[r]_{{res}_{\fm}} &
{\wh{K^Z_i(X)}}_{\ov{\fm}} \ar[r]_{f^*} & 
{\wh{K^Z_i(X^{\wt{\fm}})}}_{\ov{\fm}}}
\end{equation}  
are isomorphisms of $\wh{R(G)}_{\fm}$-modules and the vertical arrows
are injective.
\end{thm}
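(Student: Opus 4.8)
The plan is to reduce Theorem~\ref{thm:NAL} to the two known inputs: the
Chern character isomorphism~\eqref{eqn:KV} at the augmentation ideal
(the case $\fm = I_G$, proved in \cite{KV}), and a localization statement for
the restriction to the fixed locus of the subtorus $T_{\wt{\fm}}$. The first
step is to set up the reduction from $G$ to its subgroup $Z = Z_G(S)$. Since
$S = T_{\wt{\fm}}\otimes_{\Q} k$ is a central subtorus of $Z$, and $Z$ acts on
$X^{\wt{\fm}}$, the pair $(Z, X^{\wt{\fm}})$ together with the maximal ideal
$\ov{\fm}$ of $R(Z)$ is in the situation where the support is the \emph{whole}
central subtorus acting trivially on the fixed locus. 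In that situation one
expects the completion at $\ov{\fm}$ to be governed by the completion at the
augmentation ideal $I_{Z/S}$ of $R(Z/S)$, after a change of rings
$R(Z)\to R(Z/S)$ localized appropriately; this is the mechanism by which the
right-hand vertical arrow and the right square of~\eqref{eqn:NAL*} will be
controlled. The first key step, then, is to establish that $f^*$ (and its
uncompleted version $\wt f^*$) is an isomorphism after completion at
$\ov{\fm}$: this is precisely the ``non-abelian'' version of Thomason's
concentration theorem, and I would prove it by first doing the case of a
diagonalizable group (Thomason's theorem~\cite{Thom0}, which applies since
$T_{\wt{\fm}}$ is diagonalizable and its fixed locus is the relevant one),
then propagating to $Z$ via the fibration
$X^{\wt{\fm}}\stackrel{Z}{\times}U \to X^{\wt{\fm}}\stackrel{Z}{\times}(U/T)$
type argument, or equivalently via the finite morphism $R(Z)\to R(T_Z)$
(Proposition~\ref{prop:finiteR}) and faithfully flat descent after completion.

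The second step is to handle the \emph{left} square of~\eqref{eqn:NAL*}, i.e.
the restriction map $\mathrm{res}_{\fm}\colon \wh{K^G_i(X)}_{\fm}\to
\wh{K^Z_i(X)}_{\ov{\fm}}$. Here the point is purely algebraic: $R(G)\to R(Z)$
is finite (Proposition~\ref{prop:finiteR}), so $\wh{R(G)}_{\fm}\to
\prod_{\ov{\fm}\mid\fm}\wh{R(Z)}_{\ov{\fm}}$ is the completion at the fiber
over $\fm$, and one must show that only \emph{one} maximal ideal of $R(Z)$
over $\fm$ actually contributes to $K^Z_i(X)$ after completion — namely
$\ov{\fm}$ — because the other lifts have support a \emph{different}
(conjugate, but not equal) subtorus whose fixed-locus contribution is killed
by the concentration theorem of Step~1 applied relative to $Z$. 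Combined with
the flat base-change identity $\wh{K^G_i(X)}_{\fm} = K^G_i(X)\otimes_{R(G)}
\wh{R(G)}_{\fm}$ — valid because $K^G_i(X)$ is a finitely generated
$R(G)$-module once we know $R(G)$ is Noetherian and $X$ is smooth quasi-projective,
so completion is exact and commutes with the tensor product — this gives that
the left vertical arrow is an isomorphism (not merely injective) and that
$\wt{\mathrm{res}}_{\fm}$ agrees with $\mathrm{res}_{\fm}$ under it. The
vertical arrows are then injective because $\wh{R(G)}_{\fm}$ is a flat (indeed
faithfully flat on the relevant support) $R(G)_{\fm}$-algebra and the
localization $K^G_i(X)_{\fm}\to \wh{K^G_i(X)}_{\fm}$ is injective by Krull's
intersection theorem on the Noetherian local ring $R(G)_{\fm}$.

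The third step assembles the two squares: once $\wt f^*\circ\wt{\mathrm{res}}_{\fm}$
and $f^*\circ\mathrm{res}_{\fm}$ are each identified with an isomorphism and the
diagram is seen to commute (which is a formal naturality check for restriction
and pullback in equivariant $K$-theory, together with compatibility of the
change-of-rings maps), the two-out-of-three property forces every horizontal
arrow to be an isomorphism and pins down the composite $f^!_{\fm}$. The main
obstacle I expect is Step~1 in the non-diagonalizable generality: Thomason's
concentration theorem is stated for diagonalizable groups, and transporting it
to $Z$ requires either an equivariant approximation/Totaro-style argument on
the Borel construction $X\stackrel{Z}{\times}U$ (so that one is reduced to the
non-equivariant higher Chow / $K$-theory of the quotient, where Bloch's
comparison applies) or a careful unwinding of how the fixed locus $X^{\wt{\fm}}$
and the subtorus $S$ sit inside $Z$ so that the support of $\ov{\fm}$ really is
central. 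A secondary technical point is checking that the higher $K$-groups
$K^G_i(X)$ are genuinely finitely generated over $R(G)$ — needed to commute
completion past $\otimes$; this should follow from the noetherianity of $R(G)$
(finite over $R(T)=\Q[N]$) together with the spectral sequence / niveau
argument reducing to finite-dimensional pieces of the classifying-space
approximation, but it must be stated carefully since in the higher case the
relevant sums can be infinite.
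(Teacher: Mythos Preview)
Your proposal has two genuine gaps that prevent it from going through.

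\textbf{First gap (Step~2, the restriction map).} You correctly observe that $R(G)\to R(Z)$ is finite, so $\wh{K^Z_i(X)}_{\fm}$ splits as a product over the maximal ideals of $R(Z)$ lying over $\fm$. But even granting that only $\ov\fm$ contributes, this would give $\wh{K^Z_i(X)}_{\fm}\cong\wh{K^Z_i(X)}_{\ov\fm}$; it says nothing about why the map $\wh{K^G_i(X)}_{\fm}\to\wh{K^Z_i(X)}_{\fm}$ itself is an isomorphism. That is the heart of the matter, and you never address it. The paper handles this by an entirely different mechanism: it invokes Merkurjev's theorem that $K^G_i(X)\otimes_{R(G)}R(T)\xrightarrow{\cong}K^T_i(X)$ when the derived group $G'$ is simply connected (Proposition~\ref{prop:NACS}), takes Weyl-group invariants, and uses Proposition~\ref{prop:weyl} to identify the stabilizer of $\wt\fm$ with the Weyl group $W'$ of $Z$. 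The general case is then reduced to the simply connected case by embedding $G$ into a product of general linear groups $H$ chosen so that $\fm$ is the unique lift of $\fm\cap R(H)$ (Lemma~\ref{lem:incl}), together with a Morita-type comparison (Lemma~\ref{lem:quotient}, Proposition~\ref{prop:NAL*0}). None of this machinery appears in your outline.

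\textbf{Second gap (finite generation and the vertical arrows).} You assert that $K^G_i(X)$ is finitely generated over $R(G)$ and deduce that the vertical arrows are \emph{isomorphisms}, and separately that Krull's intersection theorem gives injectivity. Neither is justified: finite generation of higher equivariant $K$-groups over $R(G)$ is not established anywhere (your own hedge at the end acknowledges the difficulty), and Krull's intersection theorem requires finite generation. The paper only claims \emph{injectivity} of the vertical arrows, and proves it by the elementary Lemma~\ref{lem:inj*}: for any module $M$ over a noetherian ring $A$, the map $M\otimes_A\wh A\to\wh M$ is injective, proved by writing $M$ as a filtered colimit of finitely generated submodules. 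This avoids any finite-generation hypothesis on $K^G_i(X)$. Your claim that the vertical arrows are isomorphisms is stronger than what the theorem asserts and is not supported.

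Your Step~1 is in roughly the right spirit (reduce to the torus via the splitting $K^G\to K^T\to K^G$; this is Lemma~\ref{lem:Local} in the paper), but the remainder of the argument needs the Merkurjev/simply-connected/embedding machinery above rather than the concentration-plus-descent route you sketch.
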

We now make some remarks about the above results. We first point out
that our results are stated for the action of reductive groups.
However, the general case can be easily deduced from this by choosing
a Levi subgroup $L$ containing $T$, which always exists in 
characteristic zero as can be seen using the Lie algebras. The proofs are then
completed by Proposition~\ref{prop:NR}. We do not say more on this.
The more important remark we want to make is that our version of non-abelian 
completion and Riemann-Roch theorems are proved for the completion of the 
rational equivariant $K$-theory at the maximal ideals
of the representation ring. As the reader would observe, many of the 
intermediate results proved in this paper hold for all prime ideals of the 
representation ring although they have been stated only for the maximal
ideals. We expect that the main results of this paper can be proved for 
all prime ideals using these intermediate results and some more 
analysis.  
The reader can also see that even though the non-abelian completion
holds for the rational $K$-theory, one has to replace the field of rationals
by the residue field of the given maximal ideal for proving the Riemann-Roch 
theorem. In fact, our proof of Theorem~\ref{thm:TRR} (especially 
Lemma~\ref{lem:twist}) would show that it may not be possible to avoid doing 
this if one wants to represent the equivariant higher $K$-theory by the 
algebraic cycles.

We end the introduction with a brief outline of the various sections of this
paper. As in \cite{ED3}, our proof of the Riemann-Roch theorem 
for equivariant higher $K$-theory is based on the non-abelian completion
theorem for the rational equivariant $K$-theory. In order to prove this,
we devote our next section to the study of the geometric aspects of the 
variety defined by the representation ring of the underlying algebraic group 
$G$ defined over an arbitrary field $k$ (of any characteristic).
This helps in particular to determine all the maximal ideals of $R(G)$ as well
as $R_k(G)$. The results of this section do not depend on the nature of the
field $k$ and hence can be helpful in pursuing some of the questions 
discussed above. Section~3 is devoted to proving most of the preliminary
results one needs to prove Theorem~\ref{thm:NAL}. We finally prove this theorem
in Section~4 by first proving it for those groups whose derived groups
are simply connected and then deducing the general case from this through
some reduction steps. Section~5 is devoted to the proof of our Riemann-Roch
theorem. We do this by first defining an automorphism of the higher
equivariant $K$-theory which converts the completion with respect to a given
maximal ideal to the completion with respect to the augmentation ideal of 
the representation ring. This is then combined with Theorem~\ref{thm:NAL}
and a version of Riemann-Roch theorem for the higher equivariant $K$-theory, 
proved recently in \cite{KV}. 
\section{Geometry of the Representation Ring $R_k(G)$}
In this section, $k$ will denote any field with arbitrary characteristic.
Let $G$ be a linear algebraic group over a field $k$.
Recall that $R_k(G)$ denotes the representation
ring of $G$ with coefficients in $k$, i.e., $R_k(G) = R(G) {\otimes}_{\Z} k$.
We establish the preliminary results about $R_k(G)$ in this section. Our main
purpose here is to identify this $k$-algebra as the invariant subalgebra
of the coordinate ring of the group $G$ for the adjoint action of $G$ on 
itself. This is then used to describe the maximal ideals of $R_k(G)$ in terms 
of the closed points of the group $G$ itself.
Let $k$ be any given field. We begin with the following finiteness 
result about the maps between the integral representation rings. When $k$ is 
the field of complex numbers, such a result was proved for the rings
$R_{\C}(G)$ in \cite{ED1} using the corresponding result of Segal 
\cite{Segal} for the compact Lie groups.  
\begin{prop}\label{prop:finiteR}
Let $G$ be a connected linear algebraic group over $k$ and let $H$ be a
closed connected subgroup of $H$. Then the restriction map of integral 
representation rings $R(G) \to R(H)$ is finite.
\end{prop}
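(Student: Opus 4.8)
The plan is to reduce the finiteness of $R(G) \to R(H)$ to two classical facts: first, that $R(G)$ is a finitely generated $\Z$-algebra, and second, that over the common maximal torus the restriction map is integral. Note first that there is an apparent typo in the statement — $H$ should be a closed connected subgroup of $G$, not of $H$ — and I will read it that way. I would begin by recalling that for a connected reductive group $G$ with maximal torus $T$ and Weyl group $W$, the restriction $R(G) \to R(T)$ identifies $R(G)$ with the invariant subring $R(T)^W$; since $R(T) = \Z[N]$ is a finitely generated $\Z$-algebra and $W$ is finite, $R(T)$ is a finite $R(T)^W$-module, so $R(G) \to R(T)$ is finite and $R(G)$ is finitely generated over $\Z$. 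For a general connected linear algebraic group one uses the Levi decomposition $G = L \ltimes R_u(G)$ (available in characteristic zero, though one must be careful in positive characteristic — the paper allows arbitrary $k$ here, so the argument for non-reductive $G$ needs the fact that the unipotent radical has trivial representation ring, i.e.\ every representation of a split unipotent group is trivial after filtering, giving $R(G) \cong R(L)$; over non-perfect fields one passes to $G^{\mathrm{red}}$ appropriately). So without loss of generality $G$ is reductive, and likewise we may replace $H$ by a maximal torus argument on its side.

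The heart of the matter is then the following: with $T_H \subset H$ a maximal torus and $T_G \subset G$ a maximal torus containing a conjugate of $T_H$, we have a commutative square relating $R(G) \to R(H)$ to $R(T_G)^{W_G} \hookrightarrow R(T_H)^{W_H}$ via the inclusion $T_H \hookrightarrow T_G$. The restriction $R(T_G) \to R(T_H)$ corresponds to the surjection of character groups $N_G \twoheadrightarrow N_H$ induced by $T_H \hookrightarrow T_G$ — wait, it is the other direction: the inclusion of tori gives a surjection $X^*(T_G) \to X^*(T_H)$, hence a surjection of group algebras $\Z[N_G] \twoheadrightarrow \Z[N_H]$, so this map is certainly finite (even surjective). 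Thus $R(T_G) \to R(T_H)$ is finite. Combining: $R(G) = R(T_G)^{W_G} \subset R(T_G)$ is finite, $R(T_G) \to R(T_H)$ is finite, hence $R(T_G)^{W_G} \to R(T_H)$ is finite, and since $R(H) = R(T_H)^{W_H} \subset R(T_H)$ with $R(T_H)$ a finitely generated algebra over the Noetherian ring $R(T_H)^{W_H}$ and finite over it, we can conclude $R(G) \to R(H)$ is finite by the standard transitivity and "finite subalgebra" lemmas once we check $R(H)$ itself is module-finite over the image of $R(G)$. Concretely: $R(H)$ is a finite $R(G)$-module because it is a submodule of the finite $R(G)$-module $R(T_H)$ over the Noetherian ring $R(G)$ (here one uses that $R(G)$ is Noetherian, being finitely generated over $\Z$).

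The main obstacle I anticipate is the non-reductive and arbitrary-characteristic bookkeeping: over a general field the Levi decomposition may fail, maximal tori need not contain conjugates of each other in the naive way when groups are non-split, and one must be careful that "connected" is genuinely used (for disconnected $G$ one would additionally need finiteness of $G/G^\circ$, but that is excluded by hypothesis). I would handle this by base-changing to $\bar k$ where the structure theory is clean, proving finiteness there, and then descending: $R(G_{\bar k}) = R(G) \otimes_\Z \bar k$ compatibly, and finiteness of a map of $\Z$-algebras can be detected after the faithfully flat base change $\Z \to \bar k$ — actually more carefully, one proves $R(G)$ is a finitely generated $\Z$-algebra and $R(H)$ is a finitely generated $R(G)$-algebra that is integral over $R(G)$, both of which are statements about generators and monic relations that can be verified after tensoring with $\bar k$. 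The integrality is the crux, and it follows from the torus computation above together with the fact that $R(T_H)$ is integral over $R(G)$, which in turn rests on $W_G$ being finite — the one place where connectedness and the maximal-torus theory are really used.
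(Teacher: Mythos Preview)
Your argument over an algebraically closed field is essentially the paper's: reduce to reductive groups, identify $R(G)$ with $R(T_G)^{W_G}$, use that $R(T_G)$ is finite over its Weyl invariants and that $R(T_G) \twoheadrightarrow R(T_H)$, and conclude by observing that $R(H) \subset R(T_H)$ is a submodule of a finite module over the Noetherian ring $R(G)$. Two points deserve correction.

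First, the reduction to reductive groups: the Levi decomposition is not available in positive characteristic, and since the proposition is asserted for arbitrary $k$ you should instead argue directly, as the paper does, that the unipotent radical $R_uG$ acts trivially on every irreducible representation of $G$ (its invariants form a nonzero $G$-stable subspace), so that $R(G) \cong R(G/R_uG)$ in all characteristics.

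Second, and more seriously, your descent from $\bar k$ to $k$ does not work as written. The claimed identity $R(G_{\bar k}) = R(G) \otimes_{\Z} \bar k$ is false in general (and conflates a $\Z$-algebra with a $\bar k$-algebra); for a non-split group the base-change map $R(G) \to R(G_{\bar k})$ is typically a proper inclusion of free abelian groups, and there is no relation between $R(G) \otimes_{\Z} \bar k$ and $R(G_{\bar k})$ that lets you ``verify monic relations after tensoring with $\bar k$'' and then transport finiteness of $R(G_{\bar k}) \to R(H_{\bar k})$ back to $R(G) \to R(H)$. The paper sidesteps this by embedding $G$ (and hence $H$) into $GL_n$, which is split, so that $R(GL_n) \cong R(GL_{n,\bar k})$. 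The algebraically closed case then shows $R(H_{\bar k})$ is finite over $R(GL_{n,\bar k}) = R(GL_n)$, and a Galois-descent argument together with torsion-freeness of representation rings gives $R(H) \hookrightarrow R(H_{\bar k})$, exhibiting $R(H)$ as an $R(GL_n)$-submodule of a finite module over the Noetherian ring $R(GL_n)$. This anchoring in a split ambient group is the idea missing from your descent step.
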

\begin{proof} We first prove the proposition when $k$ is algebraically
closed and $H$ is a maximal torus $T$ of $G$. If $V$ is an irreducible 
representation of $G$ and if the unipotent radical $R_uG$ of $G$ acts 
non-trivially on $V$, then the invariant subspace of $V$ for this action 
is a non-zero subspace ({\sl cf.} \cite[Corollary~10.5]{Borel})
which is invariant under $G$ as well since 
$R_uG$ is normal in $G$. This contradicts the ireducibility of $V$. Hence
$R_uG$ must act trivially on $V$. Since $R(G)$ is a free abelian group
on the set of irreducible representations of $G$, we see that the
map $R(G) \to R(G/{R_uG)}$ is an isomorphism. Hence we can assume that 
$G$ is reductive. Let $W$ denote the Weyl group of $G$. Then $W$ acts
naturally on $T$ by inner automorphisms which induces an action of $W$ on
$R(T)$. Moreover, \cite[Th{\'e}oreme~4]{Serre} implies that
and the natural map of the representation rings induces a ring isomorphism
\begin{equation}\label{eqn:inv}
R(G) \xrightarrow{\cong} {\left(R(T)\right)}^W,
\end{equation}
where the term on the right is the ring of invariants for the Weyl
group action. Since the natural map ${\Z}[N] \to R(T)$ is an isomorphism
of rings (where $N$ is the character group of $T$), we see that $R(T)$
is a finite type algebra over $\Z$. In particular, it is finite over 
${\left(R(T)\right)}^W$ which is then also a finite type $\Z$-algebra 
({\sl cf.},\cite[Lemma~4.7]{KV}) and hence noetherian.
Now if $H$ is a closed connected subgroup of $G$, we choose a maximal
torus $S$ of $H$. Since $k$ is algebraically closed, there is a maximal
torus $T$ of $G$ containing $S$. Then we get a commutative diagram
of representation rings
\[
\xymatrix{
R(G) \ar[r] \ar[d] & R(T) \ar[d] \\
R(H) \ar[r] & R(S)}
\]
We have shown above that the horizontal arrows are finite and injective 
maps. Since $S$ is a subtorus of $T$, there is a decomposition $T = S 
\times S'$ and hence the map $R(T) \to R(S)$ is surjective. In particular,
$R(S)$ is finite over $R(G)$. Since we have also shown above that $R(G)$
is noetherian, we conclude that $R(H)$ is finite over $R(G)$.

Finally, we consider the case when $k$ is not necessarily algebraically
closed. We can embed $G$ as a closed subgroup of $GL_n$ over $k$ for 
some $n$ and then it suffices to show that $R(H)$ is finite over $R(GL_n)$.
For any algebraic group $G$ over $k$, let $G_{\ov k}$ denote the base change
to the algebraic closure of $k$. Then we get the following commutative
diagram of representation rings.
\[
\xymatrix{
R(GL_n) \ar[r] \ar[d] & R(GL_{n, \ov{k}}) \ar[d] \\
R(H) \ar[r] & R(H_{\ov k})}
\]
By the Galois descent for the rational algebraic $K$-theory, it is easy
to see that when tensored with the field of rational numbers,
the terms on the left are the Galois invariants of the corresponding
terms on the right ({\sl cf.} \cite[Lemma~8.4]{Gubel}). In particular,
the kernels of the horizontal maps are torsion. On the other hand,
a representation ring is the free abelian group on the set of irreducible
representations and hence has no torsion. We conclude that $R(H)$ is a 
$R(G)$-submodule of $R(H_{\ov k})$. Since $R(GL_n)$ is known be noetherian,
we only now need to show that $R(H_{\ov k})$ is finite over $R(GL_n)$.
But this is immediate from the case of algebraically closed fields and
the fact that $R(GL_n) \cong  R(GL_{n, \ov{k}})$.
\end{proof}  
\begin{remk}\label{remk:SHfinite}
It is shown in \cite{KV} that for the groups $G$ and $H$ as
in Proposition~\ref{prop:finiteR}, map $S(G) \to S(H)$ of the equivariant 
Chow rings is finite with the rational coefficients. It is still not known
if this holds with the integral coeffients as well.
\end{remk} 

We next state a completely known result about the semi-simple conjugacy 
classes in a connected reductive group.
\begin{lem}\label{lem:SSimple}
Let $T$ be a maximal torus of a connected reductive group $G$ over an
algebraically closed field $k$. Let $N$ denote the normalizer of $T$ in
$G$. Then any two elements of $T$ which are conjugate in $G$, are also 
conjugate by an element of $N$. In particular, the set of semi-simple conjugacy
classes in $G$ is in natural bijection with the set $T/W$ of $W$-orbits in $T$,
where $W$ is the Weyl group of $G$.
\end{lem}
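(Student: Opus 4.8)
The plan is to reduce everything to the conjugacy of maximal tori inside a centralizer. Suppose $s,t\in T$ are conjugate in $G$, say $t=gsg^{-1}$ for some $g\in G$; equivalently $s=g^{-1}tg\in g^{-1}Tg$, so $s$ lies in both $T$ and $g^{-1}Tg$. Let $H=Z_G(s)^{\circ}$ be the identity component of the centralizer of $s$, a connected linear algebraic group. First I would check that $T$ and $g^{-1}Tg$ are both maximal tori of $H$: each is connected and abelian and contains $s$, hence is contained in $Z_G(s)$ and therefore (being connected) in $H$; and each is already maximal as a torus of $G$, so a fortiori maximal in $H$. By the conjugacy of maximal tori in the connected group $H$, there is $z\in H$ with $z(g^{-1}Tg)z^{-1}=T$. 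Setting $n:=gz^{-1}$ one computes $n^{-1}Tn=z(g^{-1}Tg)z^{-1}=T$, so $n\in N=N_G(T)$, and since $z\in Z_G(s)$ one gets $nsn^{-1}=gz^{-1}szg^{-1}=gsg^{-1}=t$. This proves the first assertion; note it uses only the conjugacy of maximal tori in $H$, not the reductivity of $G$ itself.

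For the second assertion I would argue as follows. Every semisimple element of $G$ lies in some maximal torus of $G$, and all maximal tori of $G$ are conjugate, so every semisimple element is $G$-conjugate to an element of $T$; thus the assignment sending $t\in T$ to its $G$-conjugacy class is a surjection onto the set of semisimple conjugacy classes of $G$. By the first part, two elements of $T$ are $G$-conjugate precisely when they are conjugate by some $n\in N$. Because $G$ is reductive, $Z_G(T)=T$, so $N$ acts on $T$ by conjugation through the quotient $W=N/T$, and the $N$-orbits in $T$ are exactly the $W$-orbits. Hence the surjection above factors through and induces a bijection $T/W\xrightarrow{\ \cong\ }\{\text{semisimple conjugacy classes of }G\}$; naturality is immediate since the map is the one induced by the inclusion $T\hookrightarrow G$.

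The only real content — so that the lemma is indeed ``completely known'' — is packaged into a handful of classical structural facts invoked above: that a semisimple element lies in a maximal torus, that maximal tori of a connected linear algebraic group are conjugate, that $Z_G(T)=T$ for a maximal torus of a reductive group, and (implicitly, as the reason $H$ is well-behaved) that $Z_G(s)^{\circ}$ is reductive for semisimple $s$. The main thing to be careful about in the write-up is purely bookkeeping: tracking consistently whether one conjugates by $g$ or $g^{-1}$, and verifying that the element $n$ produced genuinely normalizes $T$ rather than merely carrying $T$ to some other maximal torus. Beyond that the proof is just an assembly of the cited facts.
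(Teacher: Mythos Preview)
Your proof is correct and follows essentially the same approach as the paper's: both reduce to the conjugacy of maximal tori inside the identity component of the centralizer of one of the two elements (you use $Z_G(s)^\circ$, the paper uses $Z_G(t)^\circ$, a purely cosmetic difference), and then deduce the bijection with $T/W$ in the same way. One small remark: the reductivity of $Z_G(s)^\circ$ that you flag as ``implicitly'' needed is actually not required, since conjugacy of maximal tori holds in any connected linear algebraic group---as you yourself note earlier in the write-up.
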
 
\begin{proof}(Sketch) Suppose $s, t \in T$ are such that $t = hsh^{-1}$ in
$G$. It is then easy to check directly that
$hTh^{-1}$ is contained in $Z_G(t)$ and hence $T$ and $hTh^{-1}$ are 
maximal tori of the identity component of $Z_G(t)$. Hence they are conjugate
by an element $g$ of $Z_G(t)$ ({\sl cf.} \cite[Corollary~11.3]{Borel}). But 
this implies that $gh \in N$ and $(gh)s{(gh)}^{-1} = gtg^{-1} = t$.  
 
For the second assertion, first note that every semi-simple element lies in a
maximal torus and hence its conjugacy class is same as the conjugacy class
of an element of $T$. Moreover, the above assertion shows that two such
conjugacy classes are distinct if and only if they are not conjugate by
an element of $N$. On the other hand, $W = N/T$ acts on $T$ by conjugation.
This implies that $G$-conjugacy classes of two elements of $T$ are distinct
if and only if they are in distinct $W$-orbits.
\end{proof} 

Let $G$ be a connected reductive group over a field $k$.
For any affine $k$-variety $X$, let $k[X]$ denote the coordinate ring of $X$. 
Let ${Sch}_k$ denote the category of finite type $k$-schemes. Note then that
there is a natural isomorphism of $k$-algebras
\begin{equation}\label{eqn:CRing}
k[X] \xrightarrow{\cong} {Hom}_{{Sch}_k}(X, {\A}^1_k).
\end{equation}
Recall that the algebraic group $G$ is given by the data of $k$-morphisms
\[
G {\times}_k G \xrightarrow{\mu} G, \ \ G \xrightarrow{\iota} G, \ \
Spec(k) \xrightarrow{e} G,
\]
which define the multiplication, inverse and the identity section
respectively. This data is equivalent to the following data of
morphisms of the Hopf algebras over $k$.
\[
k[G] \xrightarrow{\wh{\mu}} k[G] {\otimes}_k k[G], \ \ 
k[G] \xrightarrow{\wh{\iota}} k[G], \ \
k[G] \xrightarrow{\wh{e}} k.
\]
Let $G \xrightarrow{\Delta} G \times G$ denote the diagonal map and
let $G \times G \xrightarrow{sw} G \times G$ denote the map which
switches the coordinates.  
The {\sl adjoint} action of $G$ on itself by the composite map
`ad' given by
\begin{equation}\label{eqn:adj}
G {\times} G \xrightarrow{(\Delta, id)} G {\times} G {\times} G
\xrightarrow{(id, sw)} G {\times} G {\times} G \xrightarrow{(\mu, id)}
G {\times} G \xrightarrow{\mu} G.
\end{equation} 
In terms of the dual action of Hopf algebras, we denote it by
\begin{equation}\label{eqn:adj1} 
k[G] \xrightarrow{\wh{ad}} k[G] {\otimes}_k k[G].
\end{equation}
It is clear from the definition that when $k$ is algebraically closed,
the action is indeed the classical adjoint action $(g, h) \mapsto ghg^{-1}$
of $G$ on itself. To show that ~\ref{eqn:adj} indeed defines an
action, one needs to check that the following diagrams of the maps
of Hopf algebras commute.
\[
\xymatrix{
k[G] \ar[r]^{\wh{ad}} \ar[d]_{\wh{ad}} & k[G] {\otimes}_k k[G] 
\ar[d]^{{\wh{ad}} \otimes id} \\
k[G] {\otimes}_k k[G] \ar[r]^{id \otimes {\wh{ad}}} &
k[G] {\otimes}_k k[G] {\otimes}_k k[G]}
\]
\[
\xymatrix{
k[G] \ar@/_1pc/[rr]_{id} \ar[r]^{\wh{ad}} &
k[G] {\otimes}_k k[G] \ar[r]^{{\wh{e}} \otimes id} & k[G]}  
\]
However, the injectivity of the natural map $Hom_{sch_k} (X, Y) \to
Hom_{sch_{\ov k}} (X_{\ov k}, Y_{\ov k})$ (which can be checked locally)
implies that it is enough to check the commutativity when $k$ is algebraically
closed. But this is a straightforward checking since the adjoint
action in this case is just the conjugation.

We recall from \cite[Section~1]{GIT} that an action of $G$ on an affine
$k$-scheme $X$ is equivalent to the map of $k$-algebras
\begin{equation}\label{eqn:action}
\wh{\alpha} : k[X] \to  k[G] {\otimes}_k k[X]
\end{equation}
which satisfies the commutativity of certain diagram of maps of
$k$-algebras similar to the above. We refer the reader to {\sl loc. cit.},
Section~1 for more detail. 
\begin{defn}\label{defn:inv}
We shall say that an element $f$ of $k[X]$ is $G$-invariant for this action 
if ${\wh{\alpha}}(f) = 1 {\otimes} f$. 
\end{defn}
It is easy to check that the set of $G$-invariant elements in $k[X]$ is
a $k$-sub-algebra and will be denoted by ${k[X]}^G$. This is the
coordinate ring of the universal geometric quotient of $X$ for the action of 
$G$ on $X$. In particular, if $k$ is algebraically closed, then ${k[X]}^G$
is the invariant elements in $k[X] = Hom_{sch_k}(X, {\A}^1_k)$ for the 
action of $G(k)$ on $k[X]$ given by 
\begin{equation}\label{eqn:action1}
G \times k[X] \to k[X] 
\end{equation}
\[
(g, f) \mapsto f^g,
\]
where $f^g(x) = f \circ {\alpha}\left((g,x)\right)$.

We now consider the adjoint action of $G$ on itself as described in
~\ref{eqn:adj}. In this case, we shall often write ${k[X]}^G$
as $C[G]$. In order to understand the geometry of the representation ring
of $G$, our first step is to define a $k$-algebra map from $R_k(G) \to C[G]$ 
which has some interesting properties.  
So let $(V, \rho)$ be an $n$-dimensional representation of $G$ given
by the morphism of $k$-algebraic groups $G \xrightarrow{\rho} GL(V)$.
Let ${\chi}: GL(V) \subset {\rm End}(V) \to {\A}^1_k$ denote the
character morphism described algebraically by the composite map
\[
k[t] \xrightarrow{\wh{\chi}} k[X_{ij}] \inj k[X_{ij}, 1/{\rm det}],
\]
\[
{\wh{\chi}}(t) = \stackrel{n}{\underset {i=0}{\Sigma}}{X_{ii}}.
\]
Composing this map with $\rho$, we get the character map
\begin{equation}\label{eqn:character} 
k[t] \xrightarrow{{\wh{\chi}}_{\rho}} k[G].
\end{equation}
It is easy to check from above that if $(V_1, {\rho}_1)$ and 
$(V_2, {\rho}_2)$ are two representations of $G$, then
${{\wh{\chi}}_{{\rho}_1 \oplus {\rho}_2}}(t) =
{{\wh{\chi}}_{{\rho}_1}}(t) + {{\wh{\chi}}_{{\rho}_2}}(t)$
and ${{\wh{\chi}}_{{\rho}_1 \otimes {\rho}_2}}(t)
= {{\wh{\chi}}_{{\rho}_1}}(t) \cdot {{\wh{\chi}}_{{\rho}_2}}(t)$.
In other words, the assignment $(V, \rho) \mapsto {\wh{\chi}}_{{\rho}}(t)$
induces a $k$-algebra morphism $R_k(G) \xrightarrow{{\phi}_G} k[G]$.
\begin{prop}\label{prop:character1}
Let $G$ be a connected reductive group over a field $k$
such that it admits a maximal torus $T$ which is split over $k$. Then
the map ${\phi}_G$ induces an isomorphism of $k$-algebras
\begin{equation}\label{eqn:char}
{\phi}_G : R_k(G) \to C[G].
\end{equation}
\end{prop}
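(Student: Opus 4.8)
The plan is to prove that $\phi_G \colon R_k(G) \to C[G]$ is an isomorphism by reducing to the split torus case via the Weyl-invariant descriptions on both sides, using \eqref{eqn:inv}. First I would set up the restriction-to-$T$ machinery: for a representation $(V,\rho)$ of $G$, the character $\widehat{\chi}_\rho(t) \in k[G]$ restricts, under the inclusion $T \hookrightarrow G$, to the character of $V|_T$, which (since $T$ is split) is the honest character $\sum_{n} (\dim V_n)\,[n] \in k[N] = R_k(T)$, where $V = \bigoplus_n V_n$ is the weight decomposition. This gives a commutative square relating $\phi_G$ and $\phi_T$ through the restriction maps $R_k(G) \to R_k(T)$ and $k[G] \to k[T]$. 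The key point is that the image of $\phi_G$ already lands in $C[G] = k[G]^G$ (this is clear: characters are conjugation-invariant, which one checks over $\overline{k}$ using the injectivity of $\Hom_{\Sch_k}$ into $\Hom_{\Sch_{\overline k}}$ as the paper does for the adjoint action), so it suffices to show $\phi_G$ maps onto $C[G]$ and is injective.

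The second step is the torus case $G = T$. Here $C[T] = k[T]^T = k[T] = k[N]$ since the adjoint action of an abelian group on itself is trivial, and the map $\phi_T \colon R_k(T) = k[N] \to k[T] = k[N]$ sends $[n]$ to the coordinate function it represents — this is visibly the identity, so $\phi_T$ is an isomorphism. Now I would invoke the restriction isomorphism $R_k(G) \xrightarrow{\cong} R_k(T)^W$ (the $k$-linear extension of \eqref{eqn:inv}; note one must check the Weyl-invariant statement survives $\otimes_{\Z} k$, which holds because $R(T)$ is free abelian and $W$ is finite, so taking $W$-invariants commutes with the flat base change $\Z \to k$). On the geometric side, I need the parallel statement: restriction induces an isomorphism $C[G] = k[G]^G \xrightarrow{\cong} (k[T]^T)^W = k[T]^W = k[N]^W$. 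This is the Chevalley-type restriction theorem for the reductive group $G$ with split maximal torus $T$; the surjectivity direction is the substantive input, and injectivity follows because conjugates of $T$ are dense in $G$ (every semisimple class meets $T$, and semisimple elements are dense), so a $G$-invariant function vanishing on $T$ vanishes identically.

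Assembling these: the square
\[
\xymatrix{
R_k(G) \ar[r]^{\phi_G} \ar[d]_{\cong} & C[G] \ar[d]^{\cong} \\
R_k(T)^W \ar[r]_{\phi_T^W} & k[N]^W
}
\]
commutes, the vertical maps are isomorphisms by the two restriction theorems, and the bottom map is an isomorphism because $\phi_T$ is the identity and hence restricts to an isomorphism on $W$-invariants. Therefore $\phi_G$ is an isomorphism.

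I expect the main obstacle to be the geometric restriction statement $k[G]^G \cong k[T]^W$ over a general (possibly imperfect, positive characteristic) field $k$ with $T$ merely assumed split: over $\overline{k}$ this is classical (Steinberg, or \cite{GIT}-style arguments via the quotient $G \to G/\!/G \cong T/\!/W$), and the descent to $k$ should go through because $C[G] = k[G]^G$ is a geometric-quotient coordinate ring compatible with base change and $T$ split implies $k[T]^W$ is likewise stable under base change to $\overline k$, so the $\overline k$-isomorphism descends by faithful flatness. I would also need to double-check that the identification $C[G]\otimes_k \overline k = \overline k[G_{\overline k}]^{G_{\overline k}}$ holds — this is where the hypothesis that $T$ is split (equivalently, that things are "sufficiently rational") does its real work, and is the one place I would write the argument out carefully rather than cite.
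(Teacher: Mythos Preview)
Your approach is correct and uses the same commutative square as the paper, but there is one genuine difference worth noting. You invoke the full Chevalley restriction isomorphism $C[G] \xrightarrow{\cong} k[T]^W$ as an external input, identifying the surjectivity direction as ``the substantive input.'' The paper's argument is more economical on exactly this point: once the left vertical $R_k(G) \to R_k(T)^W$ and the bottom $\phi_T^W$ are known to be isomorphisms, the composite $\eta_C \circ \phi_G$ is already an isomorphism, so $\eta_C$ is automatically \emph{surjective}. The paper therefore only needs to prove that $\eta_C$ is \emph{injective}, which is the easy direction (your own density-of-semisimples argument). In other words, the paper obtains the Chevalley restriction isomorphism as a \emph{byproduct} of the proposition rather than as an input to it.

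This also dissolves the obstacle you flag at the end. Since only injectivity of $\eta_C$ is needed, the base-change issue reduces to the trivial observation that $C[G]_{\overline k} \hookrightarrow k[G]_{\overline k} \cong \overline k[G_{\overline k}]$ lands inside $C[G_{\overline k}]$, which is immediate; one never needs the equality $C[G] \otimes_k \overline k = C[G_{\overline k}]$. Your route works, but it leans on a harder theorem (Steinberg/Chevalley restriction, plus the GIT base-change statement for invariants of reductive groups) that the paper's diagram chase sidesteps entirely.
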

\begin{proof} We first have to show that the image of
${\phi}_G$ is contained in $C[G]$. 
For this, it suffices to show that for any representation $(V, \rho)$,
one has $\wh{ad} \circ {\wh{\chi}}_{\rho} = 1 \otimes 
{\wh{\chi}}_{\rho}$. Since the map $k[G] {\otimes}_k k[G] \to
{\ov k}[G_{\ov k}] {\otimes}_{\ov k} {\ov k}[G_{\ov k}]$ is injective,
it suffices to show that $\wh{ad}_{\ov k}\circ{\wh{\chi}}_{{\rho}_{\ov k}}
= 1 \otimes {\wh{\chi}}_{{\rho}_{\ov k}}$. Thus we can assume that
$k$ is algebraically closed. In this case, we can use ~\ref{eqn:action1} 
and the discussion following ~\ref{eqn:adj1}
to reduce to showing that ${\chi}_{\rho}(ghg^{-1}) =
{\chi}_{\rho}(h)$. But this follows immediately from the standard
properties of the trace of matrices.

To prove the isomorphism of ${\phi}_G$, note that as $G$ is $k$-split,
it is uniquely described by a root system over $k$ and hence its Weyl
group $W$ is a constant finite group which does not depend on the
base change of $G$ by any field extension of $k$. Moreover, $W$ acts on
$T$ and hence on $R_k(T)$ and $C[T] = k[T]$ such that the map ${\phi}_T$
is $W$-equivariant. Thus we have the following commutative diagram.
\begin{equation}\label{eqn:char1}
\xymatrix{
R_k(G) \ar[r]^{{\phi}_G} \ar[d]_{{\eta}_R} & C[G] \ar[d]^{{\eta}_C} \\
{R_k(T)}^W \ar[r]_{{\phi}^W_T} & {C[T]}^W}
\end{equation}
Since $T$ is a split torus, ${\phi}^W_T$ is an isomorphism.
The left vertical map is an isomorphism by \cite[Th{\'e}oreme~4]{Serre}.
This implies that the composite map ${{\eta}_C} \circ {\phi}_G$ is an
isomorphism. Thus we only need to show that ${{\eta}_C}$ is injective.

We first observe that the isomorphism $k[G]_{\ov k} 
\to {\ov k}[G_{\ov k}]$ induces a map $C[G]_{\ov k} \to 
C[G_{\ov k}]$ which is injective since the map ${C[G]}_{\ov k} \to
k[G]_{\ov k}$ is so. Now the commutative diagram
\[
\xymatrix{
C[G] \ar@{^{(}->}[r] \ar[d] & {C[G]}_{\ov k} \ar@{^{(}->}[r] &
C[G_{\ov k}] \ar[d] \\
C[T] \ar@{^{(}->}[r] & C[T_{\ov k}] \ar@{=}[r] & C[T_{\ov k}]}
\]
reduces the problem to showing that the right vertical map is injective.
Hence we can assume that $k$ is algebraically closed.

We have seen above that in this case, $C[G]$ is same as the those
functions on $G$ which are invariant under the adjoint action of
$G(k)$ on $k[G]$ given by conjugation.
 Suppose now that $f$ and $f'$ are two functions
in $C[G]$ such that they define the same elements of ${C[T]}^W$.
That is, $f$ and $f'$ define same function on the $T/W$.
We conclude from Lemma~\ref{lem:SSimple} that they take same value
on any given semi-simple element of $G$. Since the set of semi-simple
elements contains a dense open subset in $G$ ({\sl cf.} 
\cite[Theorem~11.10]{Borel}), we see that $f$ and $f'$ are same.
This proves the desired injectivity of ${{\eta}_C}$. 
\end{proof}
\begin{cor}\label{cor:charC}
Let $G$ be a connected reductive group over an algebraically closed field
$k$. Then $R_k(G)$ is naturally isomorphic to the $k$-algebra $C[G]$ of
class functions on $G$. In particular, $C[G]$ is a finite type $k$-algebra
and hence noetherian.
\end{cor}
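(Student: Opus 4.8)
The plan is to read the corollary off directly from \propref{prop:character1}, together with the finiteness facts already assembled for $R(G)$. First I would observe that over an algebraically closed field every torus is split, so any maximal torus $T$ of the connected reductive group $G$ is split over $k$; hence the hypothesis of \propref{prop:character1} is satisfied and $\phi_G \colon R_k(G) \to C[G]$ is an isomorphism of $k$-algebras. By its construction $\phi_G$ sends a representation $(V,\rho)$ to its character $\wh{\chi}_{\rho}(t) \in k[G]$, so the identification is the natural one. It then remains only to recognise $C[G]$ as the ring of class functions: by the discussion following \defref{defn:inv}, for algebraically closed $k$ the subalgebra $C[G] = k[G]^G$ consists precisely of the regular functions $f$ on $G$ with $f(ghg^{-1}) = f(h)$ for all $g,h \in G(k)$, i.e.\ of the class functions on $G$. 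This gives the asserted natural isomorphism $R_k(G) \cong C[G]$.

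For the last sentence I would show that $R_k(G)$ is a finitely generated $k$-algebra and transport this along $\phi_G$. By \eqref{eqn:inv} we have $R(G) \cong R(T)^W$, and since $R(T) \cong \Z[N]$ is a finite type $\Z$-algebra carrying an action of the finite group $W$, the invariant ring $R(T)^W$ is again a finite type $\Z$-algebra --- this is exactly the invariant-theoretic fact quoted in the proof of \propref{prop:finiteR}. Base changing to $k$, it follows that $R_k(G) = R(G) \otimes_{\Z} k$ is a finitely generated $k$-algebra, hence Noetherian by the Hilbert basis theorem; via the isomorphism $\phi_G$ the same holds for $C[G]$.

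There is essentially no obstacle here, since everything reduces to \propref{prop:character1}. The only point worth a word of care is that "class function" must be read as an element of the scheme-theoretic invariant ring $k[G]^G$ rather than merely as a set-theoretic function on $G(k)$ invariant under conjugation; since $k$ is algebraically closed the remark after \eqref{eqn:action1} identifies the two, so no ambiguity arises and the isomorphism produced is the expected one, induced by sending a representation to its character.
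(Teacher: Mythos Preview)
Your proof is correct and follows essentially the same approach as the paper: both apply \propref{prop:character1} (noting that any maximal torus over an algebraically closed field is split), identify $C[G]$ with the class functions via the remark after \eqref{eqn:action1}, and deduce finite generation of $R_k(G)$ from the isomorphism $R(G)\cong R(T)^W$ together with the standard fact that the invariants of a finite group action on a finite type $\Z$-algebra are again of finite type. The paper cites \cite[Lemma~4.7]{KV} for this last step, which is the same invariant-theoretic fact you invoke from the proof of \propref{prop:finiteR}.
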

\begin{proof} We have already seen above that in this case, $C[G]$ is
same as the algebra of class functions, i.e., those functions on $G$
which take a constant value on a conjugacy class. The corollary now
follows directly from Proposition~\ref{prop:character1}. For the last part,
we only need to show that $R_k(G)$ has the desired property. But this
follows from ~\ref{eqn:inv} and \cite[Lemma~4.7]{KV}. 
\end{proof}
\begin{cor}\label{cor:char2}
Let $G$ be a connected split reductive group of rank $r$ over a field $k$. 
Then $C[G]$ has a $k$-basis consisting of the characters of irreducible
representations of $G$. If $G$ is simply connected, then $C[G]$ is
isomorphic to the polynomial algebra $k[{\chi}_1, \cdots, {\chi}_r]$ 
in the characters of the irreducible representations with highest
fundamental weights.
\end{cor}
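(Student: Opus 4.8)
The plan is to deduce everything from the isomorphism $\phi_G\colon R_k(G)\xrightarrow{\ \cong\ }C[G]$ of Proposition~\ref{prop:character1} together with the classical structure of the integral representation ring of a split reductive group. The key observation is that, by the very definition \eqref{eqn:character} of the character map, $\phi_G$ sends the class $[V,\rho]$ of a representation to the function $g\mapsto\tr(\rho(g))$ on $G$; that is, $\phi_G$ carries the ``integral'' character of $(V,\rho)$ to its incarnation as an element of $k[G]$. So both assertions become statements about $R(G)$, transported through $\phi_G$ and then tensored with $k$.

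For the first assertion, I would use that, since $G$ is split, the simple rational $G$-modules $L(\lambda)$ are parametrized by the dominant weights $\lambda\in X^*(T)_+$, and that the classes $\{[L(\lambda)]\}_\lambda$ form a $\Z$-basis of $R(G)$ --- this is the usual fact that the Grothendieck group of an abelian category all of whose objects have finite length is free on the classes of the simple objects, applied to finite-dimensional rational $G$-modules. Tensoring with $k$, the $[L(\lambda)]$ form a $k$-basis of $R_k(G)$, and applying the isomorphism $\phi_G$ yields the first claim: the irreducible characters $\chi_\lambda:=\phi_G([L(\lambda)])$ form a $k$-basis of $C[G]$.

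For the second assertion, assume in addition that $G$ is semisimple simply connected of rank $r$, so that $X^*(T)=P$ is the full weight lattice, with $\Z$-basis the fundamental weights $\omega_1,\dots,\omega_r$; in particular each $\omega_i$ is an honest character of $T$, so $\chi_i:=\chi_{\omega_i}$ is a genuine irreducible character of $G$. It then suffices to prove that $R(G)=\Z[\chi_{\omega_1},\dots,\chi_{\omega_r}]$ is a polynomial ring, for then tensoring with $k$ and applying $\phi_G$ gives $C[G]\cong k[\chi_1,\dots,\chi_r]$. I would prove this by the standard triangularity argument inside $R(T)=\Z[P]$: writing $m_\mu$ for the sum over the $W$-orbit of a dominant weight $\mu$, one has $\chi_\lambda=m_\lambda+\sum_{\mu<\lambda}c_{\lambda,\mu}\,m_\mu$ because the weights of $L(\lambda)$ are $\le\lambda$ in the dominance order and $\lambda$ occurs with multiplicity one; and for $\lambda=\sum_i a_i\omega_i$ the monomial $\chi_{\omega_1}^{a_1}\cdots\chi_{\omega_r}^{a_r}$, being $W$-invariant with highest weight $\lambda$ occurring with coefficient one, has the same shape $m_\lambda+\sum_{\mu<\lambda}c'_{\lambda,\mu}\,m_\mu$. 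Hence the monomials $\{\chi_{\omega_1}^{a_1}\cdots\chi_{\omega_r}^{a_r}\}$ and the irreducible characters $\{\chi_\lambda\}$ are related by a matrix that is unitriangular for the dominance order, so the former are also a $\Z$-basis of $R(G)$; in particular the $\chi_{\omega_i}$ are algebraically independent and $R(G)=\Z[\chi_{\omega_1},\dots,\chi_{\omega_r}]$.

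The only substantive point is this last triangularity/polynomiality statement; the reduction via $\phi_G$ and the freeness of $R(G)$ on the simples are formal once Proposition~\ref{prop:character1} is available. What needs care is that the argument is genuinely characteristic-free: since $G$ is split, the root datum, the Weyl group, the partially ordered set of dominant weights, and the multiplicity-one property of the highest weight of $L(\lambda)$ are all unchanged under base change, so the combinatorics giving the unitriangular transition matrix goes through verbatim over an arbitrary field $k$ (the matrix entries themselves may depend on $\Char k$, but its unitriangularity does not). Here I read ``simply connected'' in the semisimple sense, which is forced by the conclusion being a polynomial ring in exactly $r$ variables; I would only spell out that case.
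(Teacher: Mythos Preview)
Your proof is correct and follows the same route as the paper: both reduce everything to the structure of $R_k(G)$ via the isomorphism $\phi_G$ of Proposition~\ref{prop:character1}, invoking that $R(G)$ is free on the classes of simples and, in the simply connected case, a polynomial ring on the fundamental characters. The only difference is that the paper simply cites Chevalley's theorem \cite{Che} for the polynomiality, whereas you supply the standard unitriangularity argument yourself; your care about characteristic-freeness is appropriate here since Section~2 works over an arbitrary field.
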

\begin{proof} The first part follows from Proposition~\ref{prop:character1}
since $R_k(G)$ has the desired property. The second part also follows 
from this proposition and Chevalley's theorem \cite{Che} that $R_k(G)$ has the
desired form if $G$ is simply connected.
\end{proof}
In the rest of this section, we shall assume $k$ to algebraically closed
(of any characteristic). Let $G$ be a connected linear algebraic group
and let $\Psi = C_G(g)$ be a semi-simple conjugacy class. Let ${\fm}_{\Psi}$
denote the kernel of the $k$-algebra map 
\begin{equation}\label{eqn:maximal}
R_k(G) \xrightarrow{{\chi}(g)} k.
\end{equation}
Since ${\chi}(g)$ takes the value one at the trivial character, we see that
${\fm}_{\Psi}$ is a maximal ideal of $R_k(G)$ with the residue field $k$.
We deduce some further consequences of Proposition~\ref{prop:character1}.   
\begin{cor}[\cite{ED1}]\label{cor:SSC}
Assume $G$ is connected and reductive. Then a conjugacy class $\Psi$ in $G$ 
is closed if and only if it is semi-simple.
\end{cor}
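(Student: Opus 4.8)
The plan is to deduce this directly from the identification of $R_k(G)$ with $C[G]$ in Proposition~\ref{prop:character1}, together with the classical structure theory of conjugacy classes (from Borel's book, already cited in the excerpt). The key mechanism is that a conjugacy class $\Psi = C_G(g)$ is closed in $G$ precisely when its ideal-theoretic closure, as detected by the class functions vanishing on it, coincides with $\Psi$ itself; and by Proposition~\ref{prop:character1} the class functions are exactly the elements of $R_k(G)$, i.e. $\Q$-combinations of characters of representations.

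First I would handle the easy direction. If $g$ is semisimple, then $g$ lies in a maximal torus $T$, and by Lemma~\ref{lem:SSimple} the intersection $\Psi \cap T$ is a single $W$-orbit in $T$, hence a finite (so closed) subset of $T$. One then shows $\Psi$ is closed in $G$: the morphism $G \to G$, $x \mapsto xgx^{-1}$ has image $\Psi$; since $Z_G(g)$ is the stabilizer and its identity component is reductive (as $g$ is semisimple), the orbit $\Psi \cong G/Z_G(g)$ is an affine variety, and by the standard fact that orbits of minimal dimension in their closure are closed, it suffices to rule out any orbit $\Psi' \subsetneq \overline{\Psi} \setminus \Psi$. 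But every element of $\overline{\Psi}$ has the same characteristic polynomial (in any faithful representation) as $g$, hence the same semisimple part up to conjugacy; combined with the fact that the closure of a conjugacy class contains a unique closed orbit, namely that of the semisimple part, and here the semisimple part of $g$ is $g$ itself, we get $\overline{\Psi} = \Psi$. Concretely: the functions $\chi_\rho$ separate semisimple conjugacy classes (this is essentially what Proposition~\ref{prop:character1} plus Lemma~\ref{lem:SSimple} give), so the Zariski closure of $\Psi$ can only add non-semisimple elements, and a dimension count on $G/Z_G(g)$ excludes those.

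For the converse, suppose $\Psi = C_G(g)$ is closed; I want to show $g$ is semisimple. Write $g = g_s g_u$ for the Jordan decomposition with $g_s$ semisimple and $g_u$ unipotent, commuting. A standard computation (the "contraction" argument using a cocharacter $\lambda : \mathbb{G}_m \to G$ landing in a maximal torus of $Z_G(g_s)$ that contracts the unipotent part) shows that $g_s$ lies in the closure $\overline{\Psi}$: one takes $\lim_{t \to 0} \lambda(t) g \lambda(t)^{-1} = g_s$. Since $\Psi$ is assumed closed, $g_s \in \Psi$, so $g_s$ is conjugate to $g$; comparing Jordan decompositions forces $g_u = e$, i.e. $g = g_s$ is semisimple. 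Alternatively, and perhaps cleaner in keeping with the algebraic spirit of this section: if $g$ were not semisimple then $g_s \neq g$ but $\chi_\rho(g) = \chi_\rho(g_s)$ for every representation $\rho$ (trace is insensitive to the unipotent part), so every class function agrees on $g$ and $g_s$; since $g_s \notin \Psi$ but no element of $R_k(G) = C[G]$ separates it from $\Psi$, the point $g_s$ lies in the Zariski closure of $\Psi$, contradicting closedness.

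The main obstacle, and the place to be careful, is the contraction/limit argument showing $g_s \in \overline{\Psi}$ — one must produce the right one-parameter subgroup and verify the limit exists and equals $g_s$, which relies on choosing $\lambda$ inside a suitable torus of $Z_G(g_s)$ relative to which $g_u$ is a product of root-subgroup elements for positive roots; this is routine but is the only genuinely geometric input. Everything else is bookkeeping with Jordan decomposition, Proposition~\ref{prop:character1}, and Lemma~\ref{lem:SSimple}. Since the corollary is attributed to \cite{ED1}, I would simply invoke that reference for the contraction step and present the character-theoretic reformulation via Proposition~\ref{prop:character1} as the new observation, keeping the proof short.
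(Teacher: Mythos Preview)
Your approach (a) via the cocharacter contraction argument is correct and is the classical route, but the paper proceeds differently. For the forward direction the paper simply cites \cite[Theorem~9.2]{Borel}. For the converse it follows \cite[Proposition~2.4]{ED1}: by Corollary~\ref{cor:charC} the ring $C[G]$ is a finite-type $k$-algebra, so the adjoint-action quotient $G \to \Spec(C[G])$ is a good GIT quotient; Mumford's result \cite[Chapter~1, Corollary~1.2]{GIT} then says the closed orbits are in bijection with the $k$-points of $\Spec(C[G]) \cong T/W$, and since the semisimple classes are already closed and account for all of $T/W$ (Lemma~\ref{lem:SSimple}), there are no other closed orbits. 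This avoids constructing any one-parameter subgroup.

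Your alternative argument (b) is closer in spirit to the paper's route, but as written it has a genuine gap. From $\chi_\rho(g) = \chi_\rho(g_s)$ for all $\rho$ you conclude that ``$g_s$ lies in the Zariski closure of $\Psi$''. That does not follow: the closure of $\Psi$ in $G$ is cut out by all of $k[G]$, not just by $C[G]$, so agreement of class functions only tells you $g_s$ and $\Psi$ have the same image in $\Spec(C[G])$. To get a contradiction you need precisely Mumford's separation statement---that invariants separate disjoint closed $G$-stable subsets of an affine $G$-variety---applied to the two closed orbits $\Psi$ and $C_G(g_s)$. That is exactly the ingredient the paper invokes, and once you add it your argument (b) becomes the paper's proof.
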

\begin{proof} It is known that the conjugacy class of a semi-simple element
is always closed ({\sl cf.} \cite[Theorem~9.2]{Borel}). The proof of the
converse follows exactly along the same line as in the proof of 
\cite[Proposition~2.4]{ED1}, once we use our Corollary~\ref{cor:charC}  
and observe that Mumford's result \cite[Chapter~1, Corollary~1.2]{GIT} 
holds over any algebraically closed field.
\end{proof}
\begin{cor}[\cite{ED1}]\label{cor:maximal1}
Let $G$ be as in Corollary~\ref{cor:SSC}. Then the correspondence
${\Psi} \mapsto {\fm}_{\Psi}$ gives a bijection between the set of
semi-simple conjugacy classes in $G$ and the maximal ideals of $R_k(G)$.
If the characteristic of $k$ is zero, the same conclusion holds for any
connected linear algebraic group.
\end{cor}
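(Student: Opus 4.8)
The plan is to treat the two statements of the corollary separately. For the reductive case, the key point is that by Corollary~\ref{cor:charC} the $k$-algebra $R_k(G)$ is isomorphic to $C[G]$, the algebra of class functions on $G$, and this is a finite type $k$-algebra. Since $k$ is algebraically closed, the maximal ideals of $R_k(G) \cong C[G]$ are in bijection with the $k$-points of the affine variety $\Spec(C[G])$, and this variety is the categorical (and, by Corollary~\ref{cor:SSC} together with Mumford's results, the geometric) quotient $G /\kern-3pt/ G$ for the adjoint action. First I would observe that the closed points of this quotient are in bijection with the closed orbits of the adjoint $G$-action on $G$, which by Corollary~\ref{cor:SSC} are precisely the semi-simple conjugacy classes. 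Unwinding the correspondence: the point of $\Spec(C[G])$ corresponding to a semi-simple class $\Psi = C_G(g)$ is exactly the maximal ideal of functions in $C[G]$ vanishing on $\Psi$, which under $\phi_G$ is the kernel ${\fm}_{\Psi}$ of the evaluation map $R_k(G) \xrightarrow{\chi(g)} k$ from~\eqref{eqn:maximal}. Conversely, every maximal ideal of $R_k(G)$ arises this way because every closed point of the quotient comes from a closed orbit, i.e.\ from a semi-simple class. Injectivity of $\Psi \mapsto {\fm}_{\Psi}$ is the statement that distinct closed orbits give distinct points of the quotient, which is the defining property of a geometric quotient (or can be seen directly: if $g, g'$ are semi-simple and non-conjugate, one produces a class function separating them, e.g.\ using Lemma~\ref{lem:SSimple} to reduce to distinct $W$-orbits in a maximal torus, where characters of $T$ separate points).

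For the second statement, suppose $\Char(k) = 0$ and $G$ is an arbitrary connected linear algebraic group. Here I would reduce to the reductive case in two steps. First, as in the proof of Proposition~\ref{prop:finiteR}, the unipotent radical $R_uG$ acts trivially on every irreducible representation of $G$, so the natural map $R_k(G) \to R_k(G/{R_uG})$ is an isomorphism; moreover in characteristic zero $R_uG$ is connected and its semi-simple conjugacy classes in $G$ are governed by those of the reductive quotient, since every semi-simple element of $G$ maps isomorphically onto a semi-simple element of $G/{R_uG}$ (a semi-simple element generates a reductive — in fact diagonalizable — subgroup which meets $R_uG$ trivially) and conjugate semi-simple elements of $G$ have conjugate images, while conversely semi-simple classes of $G/{R_uG}$ lift. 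Thus both sides of the desired bijection are unchanged by passing from $G$ to $G_{\red} := G/{R_uG}$, and the reductive case already handled applies to $G_{\red}$. I would then check that the bijection for $G_{\red}$ transports back to the one for $G$ by the evident compatibility: the composite $R_k(G) \cong R_k(G_{\red}) \xrightarrow{\chi(\bar g)} k$ agrees with $R_k(G) \xrightarrow{\chi(g)} k$ when $g$ maps to $\bar g$.

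The main obstacle I anticipate is the characteristic-zero step, specifically making precise the claim that semi-simple conjugacy classes of $G$ correspond bijectively to those of $G/{R_uG}$. The surjectivity direction (lifting a semi-simple class from the quotient) uses that in characteristic zero every linear algebraic group has a Levi decomposition $G = R_uG \rtimes L$, so a semi-simple element of $G_{\red} \cong L$ is literally an element of $G$; one must then argue that two such elements that are $G$-conjugate are already $L$-conjugate, or at least that the resulting map on classes is injective — this can be done by noting that conjugating a semi-simple element of $L$ by $R_uG \rtimes L$ and landing back in $L$ forces, after adjusting by $R_uG$, conjugation by $L$, since the $R_uG$-component of the conjugator is determined by the semi-simplicity. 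The rest is bookkeeping: identifying maximal ideals with closed points, invoking Corollary~\ref{cor:charC} and Corollary~\ref{cor:SSC}, and citing Mumford's GIT quotient results, all of which are available from the text above or are standard.
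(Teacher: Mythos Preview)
Your proposal is correct, but the reductive case takes a different route from the paper. You argue via GIT: identify $\Spec(C[G])$ with the categorical quotient $G/\kern-3pt/G$, invoke Mumford's result that closed points of an affine GIT quotient correspond to closed orbits, and then use Corollary~\ref{cor:SSC} to identify closed orbits with semi-simple classes. The paper instead bypasses GIT entirely: from Proposition~\ref{prop:character1} one has $R_k(G)\cong C[G]\cong k[T]^W$, so $\Spec(R_k(G))\cong T/W$, whose closed points (equal to $k$-points, as $k$ is algebraically closed) are the $W$-orbits in $T$, and these are exactly the semi-simple conjugacy classes by Lemma~\ref{lem:SSimple}. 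The paper's route is shorter and more elementary---it never needs to know that closed points of a GIT quotient parametrize closed orbits---while your route has the virtue of making Corollary~\ref{cor:SSC} do the work and of explaining conceptually why semi-simplicity appears. For the characteristic-zero non-reductive case both arguments reduce via a Levi decomposition; the paper simply records $R_k(G)\cong R_k(L)$ and defers to \cite[Proposition~2.5]{ED1}, whereas you spell out the bijection of semi-simple classes between $G$ and $G/R_uG$ in detail (your sketch of why $G$-conjugate semi-simple elements of $L$ are already $L$-conjugate is correct and is precisely what is needed).
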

\begin{proof} The identification of the maximal ideals of $R_k(G)$ is same
as identification of the closed points of ${\rm Spec}\left(R_k(G)\right)$
and the latter is isomorphic to ${\rm Spec}\left(C[G]\right) = T/W$ by 
Proposition~\ref{prop:character1}. On the other hand, the closed points
of $T/W$ are same as its $k$-rational points which are identified
with the semi-simple conjugacy classes of $G$ by Lemma~\ref{lem:SSimple}.  
If $k$ has characteristic zero, then $G$ has a Levi subgroup $L$ and
one has $R_k(G) \cong R_k(L)$ and the argument of 
{\sl loc. cit.}, Proposition~2.5 goes through.
\end{proof}
\begin{cor}[\cite{ED1}]\label{cor:maximal2}
Let $G \inj H$ be a closed embedding of connected reductive groups and let
${\Psi} = C_G(h)$ be a semi-simple conjugacy class in $H$. Then 
${R_k(G)}_{{\fm}_{\Psi}}$ is a semi-local ring with maximal ideals
$\left \{{\fm}_{{\Psi}_1}, \cdots, {\fm}_{{\Psi}_r}\right \}$ where
${\Psi}_1 \coprod \cdots \coprod {\Psi}_r = {\Psi} \cap G$ is the disjoint
union of the semi-simple classes in $G$. If the characteristic of $k$ is
zero, then the same conclusion holds for all connected linear algebraic 
groups. 
\end{cor}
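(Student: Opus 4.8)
The plan is to reduce the statement about $G$ to the statement about a reductive group via a Levi subgroup, and to reduce the statement about a general maximal ideal to a statement about the semi-local ring obtained by localizing and then analyzing the behavior under the finite extension $R_k(G) \hookrightarrow R_k(T)$ coming from \eqref{eqn:inv}. Concretely, first suppose $G$ and $H$ are connected \emph{reductive}. Fix a maximal torus $T$ of $H$; since we may conjugate, we may assume $h \in T$, so $\Psi \cap G$ is a union of semi-simple classes $\Psi_1, \dots, \Psi_r$ in $G$. We want to show that the maximal ideals of the localization $R_k(G)_{\fm_\Psi}$ are exactly the $\fm_{\Psi_j}$.

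First I would observe that the maximal ideals of $R_k(G)$ lying over $\fm_\Psi \subset R_k(H)$ under the restriction map $R_k(H) \to R_k(G)$ are precisely the maximal ideals of $R_k(G)_{\fm_\Psi}$, provided the restriction map is finite --- and this finiteness is exactly Proposition~\ref{prop:finiteR} (integrally, hence after $\otimes k$). So the task becomes: identify the maximal ideals of $R_k(G)$ contracting to $\fm_\Psi$. Now invoke Proposition~\ref{prop:character1} to replace $\mathrm{Spec}\,R_k(G) = T_G/W_G$ and $\mathrm{Spec}\,R_k(H) = T_H/W_H$ (choosing compatible split maximal tori, which exist over the algebraically closed $k$), with the restriction map corresponding geometrically to the map on adjoint quotients $G \to H$, i.e. $T_G/W_G \to T_H/W_H$ induced by the inclusion $T_G \subset T_H$ (after conjugating $T_G$ inside $H$ so that it lands in $T_H$). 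The maximal ideal $\fm_\Psi$ is the point $[h] \in T_H/W_H$, and by Corollary~\ref{cor:maximal1} (or directly Lemma~\ref{lem:SSimple}) its preimage in $T_G/W_G$ consists exactly of the $W_G$-orbits of elements $t \in T_G$ that are $H$-conjugate to $h$; these orbits are precisely the semi-simple $G$-conjugacy classes making up $\Psi \cap G = \Psi_1 \sqcup \dots \sqcup \Psi_r$. That $\Psi \cap G$ is a \emph{finite} union of $G$-classes is forced by the finiteness of the fiber of $T_G/W_G \to T_H/W_H$, which in turn follows from the module-finiteness of $R_k(G)$ over $R_k(H)$ (equivalently: a semi-simple conjugacy class in $H$ meets the reductive subgroup $G$ in finitely many $G$-classes, since $G$ has finitely many maximal tori up to conjugacy and each contributes finitely many points of $T_G$ in the fiber). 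Hence $R_k(G)_{\fm_\Psi}$ is semi-local with maximal ideals exactly $\{\fm_{\Psi_1}, \dots, \fm_{\Psi_r}\}$.

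For the general case in characteristic zero, I would run the reduction used already in Corollary~\ref{cor:maximal1}: a connected linear algebraic group $G$ over a field of characteristic zero has a Levi decomposition $G = L \ltimes R_u G$, and the argument at the start of the proof of Proposition~\ref{prop:finiteR} shows $R_k(G) \xrightarrow{\sim} R_k(L)$ (the unipotent radical acts trivially on every irreducible representation). Likewise $R_k(H) \xrightarrow{\sim} R_k(L_H)$ for a Levi $L_H$ of $H$, and one may arrange $L \subseteq L_H$ (extend a Levi of $G$ to one of $H$ using that all Levis are conjugate, or directly at the Lie-algebra level). The semi-simple conjugacy class $\Psi = C_H(h)$ with $h$ semi-simple has $h \in L_H$ up to conjugacy, and the class of $h$ in $H$ restricts compatibly to the reductive picture; so the reductive case applied to $L \hookrightarrow L_H$ gives the result, with $\Psi \cap G$ matching $\Psi_{L_H} \cap L$ under the identifications. (One small compatibility to check: that $\fm_\Psi$ and its restriction correspond to $\fm_{\Psi \cap L_H}$ under $R_k(H) \cong R_k(L_H)$ — this is immediate from Definition~\ref{defn:inv} / \eqref{eqn:maximal} since characters are unchanged.)

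The step I expect to be the main obstacle is verifying that $\Psi \cap G$ is a \emph{finite disjoint union} of semi-simple $G$-conjugacy classes and that these are exactly the maximal ideals appearing, i.e. controlling the fiber of $T_G/W_G \to T_H/W_H$ above $[h]$. The cleanest route is to push everything through Proposition~\ref{prop:character1} so that this fiber is literally the (zero-dimensional, hence finite by finiteness of $R_k(G)/R_k(H)$) fiber of a finite morphism of affine varieties, and then identify its $k$-points via Lemma~\ref{lem:SSimple}; the bijectivity part of Corollary~\ref{cor:maximal1} then labels each fiber point as an $\fm_{\Psi_j}$. The Levi reduction and the minor compatibility checks are routine given the machinery already developed in this section.
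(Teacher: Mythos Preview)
Your approach is essentially the same as the paper's. The paper's own proof is a one-line deferral: semi-locality comes from Proposition~\ref{prop:finiteR}, and ``the rest of the argument is same as in the proof of [ED1], Proposition~2.6 in view of Proposition~\ref{prop:character1} and Corollary~\ref{cor:maximal1}.'' You have correctly reconstructed that argument: finiteness of $R_k(H)\to R_k(G)$ gives semi-locality; the identification of $\mathrm{Spec}\,R_k(G)$ with $T_G/W_G$ via Proposition~\ref{prop:character1} lets you read the fiber over $[h]\in T_H/W_H$ as $W_G$-orbits of elements of $T_G$ that are $H$-conjugate to $h$; and Lemma~\ref{lem:SSimple}/Corollary~\ref{cor:maximal1} translates these into the semi-simple $G$-classes making up $\Psi\cap G$.

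One minor remark on the characteristic-zero reduction: your claim that one may arrange $L\subseteq L_H$ is correct but deserves a word of justification (a reductive subgroup of $H$ lies in a maximal reductive subgroup, and these are exactly the Levi subgroups by Mostow's conjugacy theorem). Alternatively, you can bypass this step entirely: Corollary~\ref{cor:maximal1} already covers the non-reductive case in characteristic zero, and the contraction $\fm_{\Psi'}\mapsto\fm_{C_H(g')}$ (for $g'\in\Psi'$) under $R_k(H)\to R_k(G)$ is immediate from~\eqref{eqn:maximal} since the character of a restriction is the restriction of the character. This gives the identification of the fiber directly, without passing through explicit Levi embeddings.
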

\begin{proof} The fact that ${R_k(G)}_{{\fm}_{\Psi}}$ is a semi-local ring
follows from Proposition~\ref{prop:finiteR}.
The rest of the argument is same as in the proof of {\sl loc. cit.},
Proposition~2.6 in view of Proposition~\ref{prop:character1}
and Corollary~\ref{cor:maximal1} above.
\end{proof}
\section{Preliminary Results}
For the rest of this paper, our ground field $k$ will be assumed to be 
algebraically closed and of characteristic zero. This field will be fixed
from now on. Following the approach of \cite{ED1}, we shall first prove the 
non-abelian completion theorem for the rational equivariant higher 
$K$-theory for those groups whose commutators are simply connected.  
In this section, we collect all the preliminary results needed in this
direction. 

For a finitely generated abelian group $N$ and a field $l$ of
characteristic zero, let $T = D_l(N)$ denote the split diagonalizable group 
over $l$ with the character group $N$. We shall sometime also write $T_l$ to 
emphasize the base field. Recall from \cite[Proposition~1.2]{Thom0} that
for every maximal ideal $\fm$ of the group algebra $l[N] = l[T]$, there is a 
unique diagonalizable closed subgroup $T_{\fm}$ of $T$ such that $\fm$ is the
inverse image of a maximal ideal of $l[T_{\fm}]$, and $T_{\fm}$ is the
smallest closed subgroup with this property. $T_{\fm}$ is called the 
{\sl support} of $\fm$ and its coordinate ring is the group algebra 
$l[N/{N_{\fm}}]$, where
\begin{equation}\label{eqn:max}
N_{\fm} = \left \{n \in N | 1-[n] \in {\fm}\right \},
\end{equation} 
which is same as the kernel of the map $N \xrightarrow{\phi}
{\left(l[N]/{\fm}\right)}^*$
given by $n \mapsto [n]$ modulo $\fm$.
We also recall that if $X$ is a variety over a field $l$, then for any
field extensions $l \subset l_1 \subset l_2$, there is a natural 
inclusion of sets $X(l_1) \inj X(l_2)$ of rational points of $X$. This map is  
$x \in X(l_1) \mapsto \ov{x}$, where $\ov x$ is represented by the diagonal map
${\rm Spec}(l_2) \to X {\times}_k {\rm Spec}(l_2)$ and the projection to the
first factor is ${\rm spec}(l_2) \to X$ whose image is the point $x$. 
The following lemma follows easily from Thomason's theorem in {\sl loc. cit.}.
\begin{lem}\label{lem:closure}
Let $T = D_{\Q}(N)$ be a split torus and let $t$ be a closed 
point of $T$ defined by a maximal ideal $\fm$ of ${\Q}[T]$ with residue field 
$l$. Let $L$ be an algebraically closed field containing $l$ and let 
$t_L \in T(L)$ denote the image of the point $x$ under the inclusion 
$T(l) \subset T(L)$. Then 
\[
T_{\fm} {\times}_{\Q} {\rm Spec}(L) \xrightarrow{\cong} S,
\]
where $S \subset T_L$ is the closure of the cyclic subgroup generated
by $t_L$.
\end{lem}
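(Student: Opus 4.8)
The plan is to reduce the statement to the standard dictionary between closed subgroups of a split diagonalizable group and quotients of its character lattice, with \cite[Proposition~1.2]{Thom0} supplying only the identification of the support $T_{\fm}$. First I would make the points explicit. Since $T_L = D_L(N) = \Spec L[N]$, an $L$-point of $T_L$ is the same thing as a group homomorphism $N \to L^{*}$. As $\fm$ has residue field $l$, the closed point $t$ is the $l$-point corresponding to $\phi\colon N \to l^{*} = (\Q[N]/\fm)^{*}$, $n \mapsto [n]\bmod\fm$, and the inclusion $T(l)\subset T(L)$ recalled above corresponds to post-composition with $l^{*}\inj L^{*}$; hence $t_L$ corresponds to $\phi_L := (l^{*}\inj L^{*})\circ\phi\colon N \to L^{*}$. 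Because $l^{*}\inj L^{*}$ is injective, $\ker\phi_L = \ker\phi$, which by \eqref{eqn:max} is precisely $N_{\fm}$.

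Next I would invoke the structure theory of split diagonalizable groups over $L$: every closed subgroup $H$ of $T_L = D_L(N)$ is of the form $D_L(N/M)$ for the unique subgroup $M = \{\,n\in N : \chi_n|_{H}=1\,\}\subseteq N$, the assignment $M\mapsto D_L(N/M)$ is inclusion-reversing, and $t_L\in D_L(N/M)(L)$ if and only if $\phi_L$ kills $M$, i.e. if and only if $M\subseteq N_{\fm}$. Since the Zariski closure of a subgroup of an algebraic group is again a closed subgroup, $S$ is the smallest closed subgroup of $T_L$ containing $t_L$. On the one hand $\phi_L$ factors through $N/N_{\fm}$, so $t_L\in D_L(N/N_{\fm})(L)$ and therefore $S\subseteq D_L(N/N_{\fm})$; on the other hand any closed subgroup of $T_L$ containing $t_L$ corresponds to some $M\subseteq N_{\fm}$ and hence contains $D_L(N/N_{\fm})$. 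Thus $S = D_L(N/N_{\fm})$.

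Finally I would match this with the left-hand side. By \cite[Proposition~1.2]{Thom0} the support $T_{\fm}$ has coordinate ring $\Q[N/N_{\fm}]$, i.e. $T_{\fm} = D_{\Q}(N/N_{\fm})$, so base change along $\Q\to L$ gives $T_{\fm}\times_{\Q}\Spec L = \Spec(\Q[N/N_{\fm}]\otimes_{\Q}L) = \Spec L[N/N_{\fm}] = D_L(N/N_{\fm})$, all compatibly with the closed immersions into $T_L$; combining with the previous paragraph yields the asserted isomorphism $T_{\fm}\times_{\Q}\Spec L\xrightarrow{\cong} S$. The argument is essentially formal; the only point deserving a word is that $S$, taken with its reduced closed subscheme structure, really coincides as a subscheme with $D_L(N/N_{\fm})$ --- but since $\Char k = 0$ the diagonalizable subgroup $D_L(N/N_{\fm})$ of the torus $T_L$ is smooth, so there is no discrepancy. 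I anticipate no serious obstacle.
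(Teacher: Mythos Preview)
Your proof is correct and follows essentially the same approach as the paper's: both identify $S$ as the smallest closed subgroup of $T_L$ containing $t_L$, compute its character lattice as $N/N_{\fm}$ via the observation that $\ker(N\to L^*) = \ker(N\to l^*) = N_{\fm}$ (because $l^*\inj L^*$), and then match this with $T_{\fm}\times_{\Q}\Spec L = D_L(N/N_{\fm})$. The only cosmetic difference is that the paper names the maximal ideal $\fm_L$ of $t_L$ in $L[N]$ and phrases the identification of $S$ as $S = T_{\fm_L}$ (the support of $\fm_L$), whereas you work directly with the lattice--subgroup dictionary; the content is the same.
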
   
\begin{proof} Let ${\fm}_L$ be the maximal ideal of $L[N]$ defining the closed
point $t_L$ of $T_L$. Since every closed subgroup of $T_L$ is diagonalizable,
we see from above that the support $T_{{\fm}_L}$ is the smallest closed 
subgroup of $T_L$ containing $t_L$ and hence is same as $S$. Thus we need to
show that $T_{\fm} {\otimes}_{\Q} L \xrightarrow{\cong} T_{{\fm}_L}$.

We see from ~\ref{eqn:max} above that $T_{\fm} = 
{\rm Spec}\left({\Q}\left[N/{N_{\fm}}\right]\right)$, where 
$N_{\fm} = {\rm Ker}\left(N \to l^*\right)$. Similarly we have
$T_{{\fm}_L} = {\rm Spec}\left(L\left[N/{N_{{\fm}_L}}\right]\right)$,
where $N_{{\fm}_L} = {\rm Ker}\left(N \to L^*\right)$. Thus it suffices to
show that $N_{\fm} = N_{{\fm}_L}$. But this follows immediately from the
commutative diagram
\begin{equation}\label{eqn:support}
\xymatrix{
& {\left({\Q}[N]\right)}^* \ar[dd] \ar[r] & l^* \ar@{^{(}->}[r] & L^*
\ar@{=}[dd] \\
N \ar[ur] \ar[dr] & & & \\
&  {\left(L[N]\right)}^* \ar[r] &  {\left(L\otimes L\right)}^* \ar[r] &
L^*.}
\end{equation}
\end{proof}
\begin{prop}\label{prop:uniqueM}
Let $G$ be a connected reductive group over $k$ and let $T$ be a maximal
torus of $G$. Let $\fm$ be a maximal ideal of $R(G)$ and let $\wt{\fm}$ be 
a maximal ideal of $R(T)$ whose inverse image in $R(G)$ is $\fm$. Assume that 
$T_{{\wt{\fm}}_k}$ is connected and is contained in the center of $G$. 
Then $\wt{\fm}$ is the unique maximal ideal of $R(T)$ which contracts to $\fm$.
\end{prop}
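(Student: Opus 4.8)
The plan is to exploit the Weyl-group description of $R(G)$ inside $R(T)$ together with the fact that, under the stated hypothesis, $\wt{\fm}$ ``sees'' only a central subtorus, on which $W$ acts trivially. Recall from ~\ref{eqn:inv} (using the split maximal torus $T$, which exists since $k$ is algebraically closed) that $R(G) \cong {\left(R(T)\right)}^W$, so that $\Spec R(T) \to \Spec R(G)$ is the quotient map by the Weyl group $W$. Consequently the maximal ideals of $R(T)$ lying over a fixed maximal ideal $\fm$ of $R(G)$ form exactly one $W$-orbit: this is the standard finiteness/going-up statement for the inclusion of a ring into its $W$-invariants (the map is finite by Proposition~\ref{prop:finiteR}, and $W$ acts transitively on the fibers of $\Spec$ of a ring over its invariant subring). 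Therefore it suffices to show that $\wt{\fm}$ is $W$-invariant, i.e. that $w \cdot \wt{\fm} = \wt{\fm}$ for every $w \in W$.

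To prove this, first I would translate everything into the language of Section~2: by Corollary~\ref{cor:maximal1}, the maximal ideal $\wt{\fm}$ of $R(T) = R_k(T)$ corresponds to a (semi-simple, since $T$ is a torus) conjugacy class in $T$, that is, to a closed point $t \in T(k)$, and $w \cdot \wt{\fm}$ corresponds to the point $w(t)$. So I must show $w(t) = t$ for all $w \in W$. Now bring in the support: by hypothesis $T_{{\wt{\fm}}_k}$ is connected (so a subtorus) and central in $G$. By the description of the support (see ~\ref{eqn:max} and the discussion preceding Lemma~\ref{lem:closure}), $T_{{\wt{\fm}}_k}$ is the smallest closed subgroup of $T$ through which the point $t$ factors; equivalently, $t$ lies in $T_{{\wt{\fm}}_k}$ and generates a dense subgroup of it. Since $T_{{\wt{\fm}}_k}$ is contained in the center $Z(G)$, we have $t \in Z(G) \cap T$. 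But $W = N_G(T)/T$ acts on $T$ by conjugation, and conjugation fixes every central element pointwise; hence $w(t) = t$ for all $w \in W$, as required.

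Putting the pieces together: $\wt{\fm}$ is fixed by $W$, the fiber of $\Spec R(T) \to \Spec R(G)$ over $\fm$ is a single $W$-orbit, and that orbit is $\{\wt{\fm}\}$; therefore $\wt{\fm}$ is the unique maximal ideal of $R(T)$ contracting to $\fm$. I expect the only delicate point to be the assertion that $W$ acts transitively on the maximal ideals of $R(T)$ over a given maximal ideal of ${\left(R(T)\right)}^W$; this is the classical fact that for a finite group $W$ acting on a ring $B$ with invariant subring $A = B^W$, the group $W$ permutes transitively the primes of $B$ over a given prime of $A$ (here all the rings in sight are finitely generated $\Q$-algebras by Corollary~\ref{cor:charC}, so there are no subtleties about integrality or finiteness), combined with the identification of $\Spec R(T)$ with the closed points of $T$. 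Everything else is a direct unwinding of the definitions of the support and of the correspondence in Corollary~\ref{cor:maximal1}.
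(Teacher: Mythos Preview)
Your strategy is the same as the paper's: both reduce to showing that the point of $T$ determined by $\wt{\fm}$ lies in $Z(G)$, hence is fixed by $W$, so the fiber (a single $W$-orbit) is a single point. The paper packages the fiber description via Corollary~\ref{cor:maximal2} (the fiber over $y_k$ is $C_G(t_k)\cap T$, which by Lemma~\ref{lem:SSimple} is precisely the $W$-orbit of $t_k$), while you invoke the transitivity of $W$ on fibers of $\Spec R(T)\to\Spec R(T)^W$ directly; these are equivalent formulations.

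There is one real slip: the identification ``$R(T)=R_k(T)$'' is false. In this paper $R(T)$ is the $\Q$-algebra $\Q[N]$, whereas $R_k(T)=k[N]$, and Corollary~\ref{cor:maximal1} concerns the latter. A maximal ideal $\wt{\fm}$ of $\Q[N]$ has residue field a finite extension $L/\Q$, not $k$, so it does not directly correspond to a single element of $T(k)$. The repair is the base-change step the paper itself carries out: choose an embedding $L\hookrightarrow k$ and let $t_k\in T(k)$ be the resulting $k$-point lying over $\wt{\fm}$; Lemma~\ref{lem:closure} then gives $t_k\in (T_{\wt{\fm}})_k\subset Z(G)$. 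Since the surjection $T(k)\to\{\text{closed points of }\Spec\Q[N]\}$ is $W$-equivariant and $w(t_k)=t_k$ for every $w\in W$, you get $w\cdot\wt{\fm}=\wt{\fm}$ as desired. With this correction your argument is complete.
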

\begin{proof} We first remark that there is always at least one maximal
ideal of $R(T)$ whose inverse image in $R(G)$ is $\fm$, since the map
$f: {\rm Spec}\left(R(T)\right) \to {\rm Spec}\left(R(G)\right)$ is 
finite and dominant by Proposition~\ref{prop:finiteR} and hence surjective.
To prove the proposition, let $t$ be the closed point of $X =   
{\rm Spec}\left(R(T)\right)$ defined by $\wt{\fm}$ and let $x = f(t)$
be the closed point of $Y = {\rm Spec}\left(R(G)\right)$ defined by $\fm$.

Suppose $t'$ is another closed point of $X$ such that $x = f(t')$ and
consider the base change map $f_k : X_k \to Y_k$. Note that $X_k$ is same
as $T$ and $Y_k = T/W$, where $W$ is the Weyl group of $G$. 
Let $t_k$ and $t'_k$ be the images of $t$ and $t'$ respectively under the
inclusion $X(l) \subset X(k)$, where $l = {\Q}(t, t')$. It is then easy
to see that $f_k(t_k) = y_k = f(t'_k)$. It suffices to show that 
$t_k = t'_k$. However, since $k$ is algebraically closed, we can apply 
Corollary~\ref{cor:maximal2} to find that 
$f_k^{-1}(y_k) = \left\{{\fm}_{s_1}, \cdots, {\fm}_{s_r}\right\}$, where 
$C_G(t_k) \cap T = C_T(s_1) \coprod \cdots \coprod C_T(s_r)$ is the
disjoint union of the conjugacy classes in $T$. Thus it suffices to show that
$t_k \in Z(G)$. Since $S = T_{{\wt{\fm}}_k}$ is the closure of the cyclic 
subgroup of $T$ generated by $t_k$ by Lemma~\ref{lem:closure} and since
$S \subset Z(G)$ by our assumption, we only need to see that 
$Z_G(S) = Z_G(t_k)$. But this follows easily by observing that for any 
$g \in Z_G(t_k)$, the subgroup $Z_G(g)$ is closed.
\end{proof}
We fix the following notations once and for all before proceeding further.
Let $G$ be a connected reductive group over $k$ as above and let 
$T = D_k(N)$ be a maximal torus of $G$. 
Let $f : X = {\rm Spec}\left(R(T)\right) \to Y = {\rm Spec}\left(R(G)\right)$
denote the natural map of $\Q$-schemes. We fix a maximal ideal 
$\fm$ of $R(G)$ and let $\wt{\fm}$ be a maximal ideal of $R(T)$ such that
$R(G) \cap \wt{\fm} = \fm$. Let $t$ (resp. $x$) denote the closed point
of $X$ (resp. $Y$) defined by $\wt{\fm}$ (resp. $\fm$). Thus we have
$f(t) = x$. Put ${\delta} = f^{-1}(x) = \left\{t_1, \cdots , t_r\right\}$ 
with $t_1 = t$. Assume that the support $T_{\wt{\fm}}$ of 
$\wt{\fm}$ is connected. Then $S = T_{{\wt{\fm}}_k}$ is a subtorus of $T$ and 
the following property of its centralizer $Z = Z_G(S)$ in $G$ is well known.
\begin{lem}\label{lem:cent}
$Z = Z_G(S)$ is connected and reductive.
\end{lem}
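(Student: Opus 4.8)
This is a classical structural fact, and the plan is to derive it from the root-space decomposition of $G$ relative to $T$ together with the standard connectedness theorem for centralizers of tori. First observe that because $S \subset T$ and $T$ is commutative, one has $T \subset Z := Z_G(S)$, so $T$ is a maximal torus of $Z$. For connectedness I would simply invoke the theorem that the centralizer of a torus in a connected linear algebraic group is connected (for instance \cite[Corollary~11.12]{Borel}), applied to the subtorus $S$ of the connected group $G$. (Alternatively one could choose $s \in S(k)$ for which $Z_G(s)$ has minimal dimension --- legitimate since there are only finitely many subtori of $S$ and each $Z_G(S')$ is cut out by the vanishing of finitely many roots --- so that $Z_G(s) = Z_G(S)$, and then argue via the known connectedness of centralizers of semisimple elements; but the direct citation is cleaner.)

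For reductivity, write $\fg = \operatorname{Lie}(G) = \operatorname{Lie}(T) \oplus \bigoplus_{\alpha \in \Phi} \fg_{\alpha}$ for the root-space decomposition under the adjoint $T$-action, where $\Phi$ is the set of roots of $T$ in $G$. Since $\Char k = 0$, the group $Z$ is smooth with $\operatorname{Lie}(Z) = \fg^{S} = \operatorname{Lie}(T) \oplus \bigoplus_{\alpha \in \Phi_S} \fg_{\alpha}$, where $\Phi_S = \{\alpha \in \Phi : \alpha|_S \text{ is trivial}\}$. The subset $\Phi_S$ is closed (if $\alpha,\beta \in \Phi_S$ and $\alpha+\beta \in \Phi$ then $(\alpha+\beta)|_S$ is trivial) and symmetric, and the standard description of centralizers of subtori (see \cite[\S13--14]{Borel}) identifies $Z$ with the subgroup generated by $T$ and the root subgroups $U_{\alpha}$, $\alpha \in \Phi_S$. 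It then remains only to check that the unipotent radical $R_u(Z)$ is trivial.

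This last step goes as follows: $R_u(Z)$ is a connected unipotent normal subgroup of $Z$, hence stable under conjugation by $T$, so it is directly spanned by the root subgroups $U_{\alpha}$ for $\alpha$ ranging over some closed subset $\Psi \subseteq \Phi_S$. If $\Psi$ were nonempty, pick $\alpha \in \Psi$; since $-\alpha \in \Phi_S$ as well, $U_{-\alpha} \subset Z$ normalizes $R_u(Z)$, and the commutator relations inside the rank-one subgroup $\langle U_{\alpha}, U_{-\alpha}\rangle$ (an isogeny image of $\SL_2$) force a nontrivial semisimple element of $T$ into $R_u(Z)$, contradicting unipotence. Hence $\Psi = \emptyset$, so $R_u(Z) = 1$ and $Z$ is reductive. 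The only genuinely nontrivial input is the connectedness of $Z_G(S)$, which is why I would quote it from the literature; the reductivity is then a direct reading of the root datum of $G$ restricted to $S$.
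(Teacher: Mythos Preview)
Your proof is correct. The paper's own proof consists solely of the citation ``See \cite[Corollary~11.12 and Section~13.17, Corollary~2]{Borel}'', so for connectedness you invoke exactly the same reference. For reductivity the paper again just cites Borel (\S13.17, Corollary~2), whereas you unpack the argument via the root decomposition and the triviality of $R_u(Z)$; your root-subgroup argument is essentially the content of that reference, so the approaches coincide in substance, with yours being more explicit. One small remark: the final step would read more cleanly if phrased as ``$R_u(Z)\cap\langle U_\alpha,U_{-\alpha}\rangle$ is a normal subgroup of the almost simple group $\langle U_\alpha,U_{-\alpha}\rangle$ containing $U_\alpha$, hence equals the whole group, so $R_u(Z)$ contains the coroot torus $\alpha^\vee(\G_m)$'' rather than via commutator relations, but the idea is the same.
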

\begin{proof} See \cite[Corollary~11.12 and Section~13.17, 
Corollary~2]{Borel}.
\end{proof} 
Let $W$ and $W'$ be the Weyl groups of $G$ and $Z$ respectively with respect 
to the maximal torus $T$. Then $W$ naturally acts on $T$ and hence on 
$X = {\rm Spec}\left(R(T)\right)$. 
\begin{prop}\label{prop:weyl}
For the action of the Weyl group $W$ on $X$ as above, one has $W{\fm}
= \ ({\rm orbit\ \  of} \ \ t)= {\delta}$ and the stabilizer of $t$ is 
the subgroup $W'$.
\end{prop}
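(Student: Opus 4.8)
The plan is to prove the two assertions — that the $W$-orbit of $t$ equals the fibre $\delta = f^{-1}(x)$, and that the stabiliser of $t$ in $W$ is $W'$ — separately, treating both as statements about the conjugation action of $W$ on $T$. It is harmless to base change to $k$ and work with a closed point $t_k \in T(k)$ lying over $\wt{\fm}$; by \lemref{lem:closure} every such $k$-point has the property that the Zariski closure of its cyclic subgroup is $S$, so the particular choice will be immaterial.

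For the first assertion, recall from \eqref{eqn:inv} that the canonical map identifies $R(G)$ with the ring of invariants $R(T)^W$, so (after base change to $k$) $f$ is the quotient morphism $T \to T/W$ for the finite group $W$. For the quotient of an affine scheme by a finite group, the fibres of the quotient morphism over closed points are exactly the orbits — equivalently, $W$ acts transitively on the set of primes of $R(T)$ lying over a given prime of $R(T)^W$ — and by \propref{prop:finiteR} the map $f$ is finite, in particular surjective, so $x$ does lie in its image. Hence $\delta = f^{-1}(x) = W\cdot t$, which is the asserted identification with the orbit of $t$.

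For the second assertion, choose $n \in N_G(T)$ representing $w \in W$. Since $t_k$ lies in the abelian group $T$, the action of $w$ sends $t_k$ to $n t_k n^{-1}$, independently of the chosen representative; thus $w$ stabilises $t$ if and only if $n \in Z_G(t_k)$, so that
\[
\mathrm{Stab}_W(t) \;=\; \bigl(N_G(T)\cap Z_G(t_k)\bigr)/T .
\]
It remains to identify $Z_G(t_k)$ with $Z = Z_G(S)$. By \lemref{lem:closure}, $S$ is the Zariski closure of the cyclic subgroup generated by $t_k$, so $Z = Z_G(S)\subseteq Z_G(t_k)$ trivially; conversely, if $g\in Z_G(t_k)$ then $t_k\in Z_G(g)$, and since $Z_G(g)$ is a closed subgroup of $G$ it contains $\overline{\langle t_k\rangle} = S$, whence $g\in Z_G(S) = Z$. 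Therefore $Z_G(t_k) = Z$; as $S\subseteq T$ and $T$ is abelian we have $T\subseteq Z$, and we conclude $\mathrm{Stab}_W(t) = (N_G(T)\cap Z)/T = N_Z(T)/T = W'$.

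The main point requiring care is the bookkeeping between the $\Q$-scheme $X = \Spec R(T)$ on which $W$ acts and the $k$-group $T$ where the arguments with centralisers and closures of cyclic subgroups live: one must read the orbit and the stabiliser at the geometric point $t_k$, and observe that the support $T_{\wt{\fm}}$, being intrinsic to $\wt{\fm}$, base changes to the closure $S$. Granting this, the mathematical crux is the identity $Z_G(t_k) = Z_G(S)$, which rests on \lemref{lem:closure} together with the closedness of centralisers in $G$.
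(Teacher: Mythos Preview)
Your proof is correct and follows essentially the same line as the paper's. The first assertion is handled identically via the identification $Y = X/W$. For the second, both arguments rest on the identity $Z_G(t_k) = Z_G(S)$, obtained from \lemref{lem:closure} and the closedness of centralisers, to get $\mathrm{Stab}_W(t) \subseteq W'$. The only difference is in the reverse inclusion: the paper invokes \propref{prop:uniqueM} applied to $Z$ (so that the $W'$-orbit of $t$ is a singleton), whereas you read it off directly from $Z = Z_G(S) \subseteq Z_G(t_k)$, which is slightly more economical and avoids that detour.
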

\begin{proof} Since the map $f : X \to Y$ is the quotient map for the
action of the finite group $W$ on $X$ by Proposition~\ref{prop:finiteR},
$Y$ is a universal geometric quotient of $X$. In particular, we have
$f^{-1} (x) = f^{-1} \left(f(t)\right) = Wt = W{\fm}$. This proves the first
part. To prove the second part, we first observe that $Z$ is a connected
reductive group by Lemma~\ref{lem:cent} and hence we can apply 
Proposition~\ref{prop:uniqueM} to see that $t$ is the only closed point
of $X$ which lies over $f'(t)$ under the quotient map $f' : X \to
Y' = {\rm Spec}\left(R(Z)\right) = X/{W'}$. Now we can use the first part 
of the proposition with $G$ replaced by $Z$ to conclude that
$W' \subset {\rm St}_W(t)$. Thus we need to show the reverse inclusion to
finish the proof. So let $w \in {\rm St}_W(t)$. Then $w$ also stabilizes
$t_k \in T_{{\wt{\fm}}_k} = S \subset T$. Since $W = N_G(T)/T$, we can lift
$w$ to an element $\ov{w}$ of $G$ and then this lift acts on $T$ by 
conjugation.
In particular, $w \in {\rm St}_W(t_k)$ implies that $\ov{w} \in Z_G(t_k)$
and the latter is same as $Z_G(S) = Z$ as we have seen above. Hence 
$w \in N_Z(T)/T = W'$. This proves the proposition.
\end{proof}

In the next few paragraphs, we recollect some basic facts about
the fundamental groups of linear algebraic groups which are relevant to us
in this paper. Our main interest is Proposition~\ref{prop:SCC}
below which will be needed in the next section. Let $G$ be a connected
reductive group over $k$ with a fixed maximal torus $T$ and the Weyl group
$W$. Let $X^*(T)$ and $X_*(T)$ denote the groups of characters and 
cocharacters of $T$ respectively. Recall that since $T$ is a split torus, 
the group $G$ is uniquely described by its root system 
$\left(G, T, {\Phi}\right)$. The set $\Phi$ is the set of non-zero elements 
$\alpha \in X^*(T)$ such that for the action of $T$ on the Lie algebra $\fg$ 
of $G$ via the adjoint representation, the subspace 
\[
{\fg}_{\alpha} = \left\{v \in \fg | tv = {\alpha}(t)v \ \forall \
t \in T\right\}
\]
is not zero. It is clear from this that $\Phi$ is a finite set.
If we put $V = X^*(T) {\otimes}_{\Z} {\R}$, then recall that $V$ has a 
$W$-invariant inner product $<,>$ and this makes $(V, \Phi)$ an abstract
root system. If ${\Phi}^{\vee} \subset V^*$ is set of all elements
${\alpha}^{\vee}$ such that ${\alpha}^{\vee}(v) = 
\left <\frac{2}{<\alpha, \alpha>} \alpha, v \right>$ for $\alpha \in \Phi$,
then $(V^*, {\Phi}^{\vee})$ is called the dual root system.
Furthermore, there is a perfect pairing $X^*(T) \times X_*(T)
\to {\Z}$ given by $(f, g) \mapsto <f, g> = f \circ g \in X_*({\G}_m)
= \Z$, and this identifies $X_*(T) {\otimes}_{\Z} {\R}$ with $V^*$ and the
integrality condition of the root system implies that the subset
\[
\Lambda \left({\Phi}^{\vee}\right) = \left\{v \in V^* |
\left<{\alpha}^{\vee}, v\right> \in \Z\right\}
\]
is a lattice in $V^*$ and is in fact contained in $X_*(T)$.

The inclusion $T \inj G$ defines a natural map
$X_*(T) = {\pi}_1(T) \to {\pi}_1(G)$ which induces an isomorphism
({\sl cf.} \cite[Section~31.8]{Hump})
\begin{equation}\label{eqn:Pi1}
\frac{X_*(T)}{\Lambda \left({{\Phi}'}^{\vee}\right)} \xrightarrow{\cong}
{\pi}_1(G),
\end{equation}
where $\left(G', T', {\Phi}'\right)$ is the root system of the derived
subgroup $G'= (G, G)$ of $G$. It is easy to see that in case $G$ is a
complex reductive group, then ${\pi}_1(G)$ coincides with the topological
fundamental group of $G$. The second part of the following result was proved in
\cite[Lemma~2.5]{ED3} for the complex algebraic groups by purely topological
means.
\begin{lem}\label{lem:SC1}
Let $G$ be as above. Then ${\pi}_1(G)$ is a finitely generated abelian group
and it is finite if and only if $G$ is semi-simple. Furthermore, the derived 
subgroup $G'$ of $G$ is simply connected if and only if ${\pi}_1(G)$ is 
torsion-free.
\end{lem}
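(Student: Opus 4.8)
The plan is to use the structural description of $\pi_1(G)$ given in \eqref{eqn:Pi1}, namely the isomorphism $X_*(T)/\Lambda({{\Phi}'}^{\vee}) \xrightarrow{\cong} \pi_1(G)$, where $\Phi'$ is the root system of the derived group $G'$. Since $X_*(T)$ is the cocharacter lattice of a split torus, it is a free abelian group of finite rank, and $\Lambda({{\Phi}'}^{\vee})$ is a sublattice; hence the quotient is a finitely generated abelian group, proving the first assertion. First I would observe that $\Lambda({{\Phi}'}^{\vee})$ spans the same $\R$-subspace of $V^* = X_*(T)\otimes_{\Z}\R$ that is spanned by the coroots ${{\Phi}'}^{\vee}$, namely $V'^* := X_*(T')\otimes_{\Z}\R$ where $T' = T \cap G'$ is a maximal torus of $G'$; this is because a root system is nondegenerate, so the coroots span the dual space of the ambient space of the root system $\Phi'$.

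Next I would determine when $\pi_1(G)$ is finite. A finitely generated abelian group $M$ is finite iff it has rank zero, i.e. $M\otimes_{\Z}\Q = 0$. Applying this to $M = X_*(T)/\Lambda({{\Phi}'}^{\vee})$, finiteness is equivalent to $\Lambda({{\Phi}'}^{\vee})$ having full rank in $X_*(T)$, i.e. to $V'^* = V^*$, i.e. to $\dim T' = \dim T$, i.e. to $T \subseteq G'$, i.e. to $G = G'$ (using that $G/G'$ is a torus and $T$ surjects onto it). And $G = G'$ is precisely the condition that $G$ is semisimple. So $\pi_1(G)$ is finite iff $G$ is semisimple, which is the second assertion.

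For the last assertion, I would use that $G'$ simply connected means $\pi_1(G') = 0$, which by \eqref{eqn:Pi1} applied to $G'$ (whose root system is also $\Phi'$ with the maximal torus $T'$) says $X_*(T') = \Lambda({{\Phi}'}^{\vee})$ inside $V'^*$. The point is then to compare this with the condition that $X_*(T)/\Lambda({{\Phi}'}^{\vee})$ is torsion-free. The exact sequence $0 \to X_*(T') \to X_*(T) \to X_*(T/T') \to 0$ (coming from the surjection $T \twoheadrightarrow T/T' = G/G'$, whose kernel is the torus $T'$, so all three groups are lattices and the sequence is split) shows $X_*(T)/X_*(T')$ is torsion-free. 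Hence $X_*(T)/\Lambda({{\Phi}'}^{\vee})$ is torsion-free iff the inclusion $\Lambda({{\Phi}'}^{\vee}) \hookrightarrow X_*(T')$ is an equality (a sublattice $\Lambda' \subseteq \Lambda'' \subseteq \Lambda$ with $\Lambda/\Lambda'$ torsion-free and $\Lambda''/\Lambda'$ torsion forces $\Lambda' = \Lambda''$), which is exactly the condition $\pi_1(G') = 0$.

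The main obstacle is bookkeeping rather than conceptual: one must carefully justify that $\Lambda({{\Phi}'}^{\vee})$ sits inside $X_*(T')$ with finite index (equivalently, that it has full rank in $V'^*$ and consists of genuine cocharacters of $T'$), and that the torus quotient $T/T'$ indeed has cocharacter lattice a direct summand of $X_*(T)$. Both follow from standard structure theory of reductive groups — the first from \cite[Section~31.8]{Hump} type statements relating the coroot lattice, the cocharacter lattice of the derived torus, and $\pi_1$, and the second from the fact that a surjection of tori splits up to isogeny and the cocharacter functor is exact on tori. With these in hand the three claims reduce to the elementary lattice arguments above.
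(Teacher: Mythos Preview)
Your proposal is correct and follows essentially the same route as the paper: both use the description \eqref{eqn:Pi1}, the fact that $\Lambda({{\Phi}'}^{\vee})$ has rank equal to the semisimple rank of $G$ to handle the finiteness claim, and the short exact sequence $0 \to X_*(T')/\Lambda({{\Phi}'}^{\vee}) \to X_*(T)/\Lambda({{\Phi}'}^{\vee}) \to X_*(T)/X_*(T') \to 0$ with torsion-free right-hand term to characterize when $\pi_1(G)$ is torsion-free. The paper's version is terser, but the substance is the same.
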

\begin{proof} The first part of the lemma follows directly from 
~\ref{eqn:Pi1} above once we observe that the rank of 
$\Lambda \left({{\Phi}'}^{\vee}\right)$ is same as the semi-simple rank of $G$.
To prove the second part, let $T' = G' \cap T$. Then it is well known that
$T'$ is a maximal torus of $G'$ and $T/{T'}$ is also a torus. In particular,
$\frac{X_*(T)}{X_*(T')} \cong X_*(T/{T'})$ is a free abelian group. The lemma
now follows immediately from the short exact sequence
\begin{equation}\label{eqn:SC2}
0 \to \frac{X_*(T')}{\Lambda \left({{\Phi}'}^{\vee}\right)} \to
\frac{X_*(T)}{\Lambda \left({{\Phi}'}^{\vee}\right)} \to 
\frac{X_*(T)}{X_*(T')} \to 0.
\end{equation}
\end{proof}
The following result about the fundamental groups of the centralizers of
the sub-tori of $G$ will play a crucial role in the next section to prove
the non-abelian completion theorem in a special case.
\begin{prop}\label{prop:SCC}
Let $G$ be as in Lemma~\ref{lem:SC1} and assume that the derived subgroup
$G'$ of $G$ is simply connected. Then for any torus $S \subset G$ in $G$,
the derived subgroup $Z'$ of the centralizer $Z = Z_G(S)$ is also simply
connected. In particular, the commutators of the Levi subgroups of
any parabolic subgroup of $G$ are simply connected. 
\end{prop}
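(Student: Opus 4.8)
The plan is to reduce the statement about $Z' = (Z,Z)$ being simply connected to the criterion established in Lemma~\ref{lem:SC1}, namely that for a connected reductive group the derived subgroup is simply connected precisely when the fundamental group of the whole group is torsion-free. So first I would fix a maximal torus $T$ of $G$ containing $S$ (possible since $S$ is itself a torus, hence contained in some maximal torus, and we may assume $S \subset T$); note that $T \subset Z = Z_G(S)$ and that $T$ is a maximal torus of $Z$ by Lemma~\ref{lem:cent}, which also tells us $Z$ is connected and reductive. The goal then becomes: show ${\pi}_1(Z)$ is torsion-free, given that ${\pi}_1(G)$ is torsion-free.

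The key computation is to compare the two isomorphisms furnished by ~\ref{eqn:Pi1}:
\[
\frac{X_*(T)}{\Lambda \left({{\Phi}'}^{\vee}\right)} \xrightarrow{\cong} {\pi}_1(G),
\qquad
\frac{X_*(T)}{\Lambda \left({{\Phi}'_Z}^{\vee}\right)} \xrightarrow{\cong} {\pi}_1(Z),
\]
where ${\Phi}'$ is the root system of $G' = (G,G)$ and ${\Phi}'_Z$ that of $Z' = (Z,Z)$, both taken with respect to $T$. The root system of $Z = Z_G(S)$ relative to $T$ is the subsystem ${\Phi}_Z = \{\alpha \in {\Phi} : \alpha|_S = 1\}$ (the roots trivial on $S$), a standard Levi-type subsystem of ${\Phi}$. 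Since ${\Phi}_Z \subset {\Phi}$, the coroot lattice satisfies $\Lambda({{\Phi}'_Z}^{\vee}) \subseteq \Lambda({{\Phi}'}^{\vee})$ inside $X_*(T)$. Therefore there is a natural surjection
\[
\frac{X_*(T)}{\Lambda({{\Phi}'_Z}^{\vee})} \twoheadrightarrow \frac{X_*(T)}{\Lambda({{\Phi}'}^{\vee})},
\]
i.e.\ ${\pi}_1(Z) \twoheadrightarrow {\pi}_1(G)$, with kernel $\Lambda({{\Phi}'}^{\vee})/\Lambda({{\Phi}'_Z}^{\vee})$. To conclude that ${\pi}_1(Z)$ is torsion-free, it suffices to show both the kernel and the quotient ${\pi}_1(G)$ are torsion-free. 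The quotient is torsion-free by hypothesis (via Lemma~\ref{lem:SC1}). For the kernel I would argue that $\Lambda({{\Phi}'}^{\vee})/\Lambda({{\Phi}'_Z}^{\vee})$ is torsion-free: the coroots $\{\alpha^{\vee} : \alpha \in {\Phi}_Z\}$ form a "saturated" or "closed" subset of the ambient coroot system in the sense that ${\Phi}_Z^{\vee} = {\Phi}^{\vee} \cap \mathrm{span}_{\Q}({\Phi}_Z^{\vee})$ — this is exactly the statement that a Levi subsystem is closed and its coroot lattice is a direct summand of the ambient coroot lattice, equivalently $\Lambda({{\Phi}'_Z}^{\vee})$ is primitively embedded. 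Hence the quotient of coroot lattices embeds as a subgroup of a free abelian group (a suitable complementary summand), so it is torsion-free. Combining, ${\pi}_1(Z)$ is torsion-free, so $Z'$ is simply connected by Lemma~\ref{lem:SC1}.

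For the final assertion about Levi subgroups of parabolics: any Levi subgroup $L$ of a parabolic $P \subset G$ is itself the centralizer $Z_G(S)$ of the connected component of the center $S = Z(L)^{\circ}$, which is a subtorus of $G$; so the claim is a special case of what was just proved. The main obstacle I anticipate is pinning down cleanly the fact that the coroot lattice of a Levi subsystem is a primitive (direct-summand) sublattice of the ambient coroot lattice — this is the crux that makes the kernel torsion-free, and it rests on the structural description of ${\Phi}_Z$ as the roots vanishing on $S$ together with the observation that a root lying in the $\Q$-span of ${\Phi}_Z$ automatically vanishes on $S$. Once that is in hand, the rest is the short exact sequence argument above, entirely parallel to the proof of Lemma~\ref{lem:SC1}. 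I would cite \cite[Section~31.8]{Hump} and \cite[Corollary~11.12]{Borel} for the structural facts about $Z_G(S)$ and its root datum.
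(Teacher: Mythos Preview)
Your approach is essentially the same as the paper's: both reduce via Lemma~\ref{lem:SC1} to showing ${\pi}_1(Z)$ is torsion-free, and both use the description ~\eqref{eqn:Pi1} of ${\pi}_1$ as $X_*(T)$ modulo the coroot lattice. The paper phrases the key step as an orthogonal-complement decomposition (with respect to the $W$-invariant form) of $\Lambda({\Phi'_I}^{\vee})$ inside $\Lambda({\Phi'}^{\vee})$, while you phrase it as the short exact sequence $0 \to \Lambda({\Phi'}^{\vee})/\Lambda({\Phi'_Z}^{\vee}) \to {\pi}_1(Z) \to {\pi}_1(G) \to 0$ together with the claim that the kernel is torsion-free. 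Your formulation is arguably cleaner; the paper's sentence that ``${\pi}_1(Z)$ is a direct summand of ${\pi}_1(G)$'' is stated in the wrong direction, but the intended content is the same primitivity assertion you isolate.

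One small gap to tighten: the justification you offer for primitivity, namely that a root lying in the $\Q$-span of $\Phi_Z$ automatically restricts trivially to $S$, shows $\Phi \cap \mathrm{span}_{\Q}(\Phi_Z) = \Phi_Z$, but this is a statement about the \emph{set} of (co)roots, not about the full coroot \emph{lattice}. It does not by itself rule out an element of $\Lambda({\Phi'}^{\vee}) \cap \mathrm{span}_{\Q}({\Phi'_Z}^{\vee})$ that is a non-integral combination of coroots of $Z$. The cleanest fix is the one implicit in the paper's Levi picture: choose a base $\Delta$ of $\Phi$ so that $\Phi_Z$ is the subsystem generated by a subset $I \subset \Delta$; then the simple coroots $\{\alpha^{\vee} : \alpha \in \Delta\}$ form a $\Z$-basis of $\Lambda({\Phi'}^{\vee})$ and $\{\alpha^{\vee} : \alpha \in I\}$ a $\Z$-basis of $\Lambda({\Phi'_Z}^{\vee})$, exhibiting the latter as a direct summand outright. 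With that in place your extension argument goes through verbatim.
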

\begin{proof} Using Lemma~\ref{lem:SC1}, it is enough to show that
${\pi}_1(Z)$ is torsion-free.
Let $T$ be a maximal torus of $G$ containing $S$ and let
$\left(G, T, {\Phi}\right)$ be the associated root system. It is then
well known ({\sl cf.} \cite[Section~11.17]{Borel}) that there is a subset 
$I$ of $\Phi$ such that $S$ is the identity component of the subgroup 
$\stackrel{} {\underset {\alpha \in I}{\bigcap}} {\rm Ker}(\alpha)$ of $T$
and hence $Z =  Z_G(S)$ is the Levi subgroup of a parabolic subgroup of
$P$ containing $T$ ({\sl cf.}\cite[Proposition~14.18]{Borel}).
It is easy to check in this case that $Z' = Z \cap G'$. 
Let ${\Phi}'_I \subset {\Phi}'$ be the subset consisting of the linear 
combination of elements of $I$. Then $X_*(S)$ has an orthogonal
complement ${\left(X_*(S)\right)}^{\perp}$ inside $X_*(T)$ with respect
to the inner product as described above and so has 
${\Lambda \left({{\Phi}'_I}^{\vee}\right)}$ inside 
${\Lambda \left({{\Phi}'}^{\vee}\right)}$. In particular, the quotient
${\pi}_1(Z) = \frac{X_*(T)}{\Lambda \left({{\Phi}'_I}^{\vee}\right)}$ is
a direct summand of $\frac{X_*(T)}{\Lambda \left({{\Phi}'}^{\vee}\right)}
= {\pi}_1(G)$ and the latter group is torsion-free.
The second part of the proposition follows from the first because
the Levi subgroup of any parabolic subgroup $P$ of $G$ is isomorphic to
the centralizer of the subtorus $S$ of $T$ as above ({\sl cf. loc. cit.},
Corollary~14.19).
\end{proof}  

We end this section with the following reduction step
which can be used to prove the main results of this paper for non-reductive
groups.
\begin{prop}\label{prop:NR}
Let $G$ be a connected linear algebraic group and let $L$ be a Levi subgroup
of $G$. Suppose $G$ acts on a smooth quasi-projective variety $X$. Then
the restriction map $K^G_*(X) \to K^L_*(X)$ is an isomorphism. In
particular, $R(G) \xrightarrow{\cong} R(L)$.
\end{prop}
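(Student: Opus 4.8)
The plan is to reduce the statement about the unipotent radical to the known case. Write $G = L \ltimes R_u(G)$, where $R_u(G)$ is the unipotent radical of $G$; since we are in characteristic zero, such a Levi decomposition exists. First I would show that the inclusion $L \inj G$ induces an isomorphism on equivariant $K$-theory of $X$ by exhibiting $X$ as a $G$-equivariant affine bundle (in fact a torsor under a vector group) over a suitable space on which $L$ acts, and then invoking homotopy invariance of $K$-theory together with the projection formula. Concretely, the key geometric input is that $R_u(G)$ is a connected unipotent group, hence as a variety it is isomorphic to an affine space $\A^N$, and the quotient map $G \to G/R_u(G) = L$ makes $G$ into an $R_u(G)$-torsor over $L$ which, being an affine-space bundle in characteristic zero, is Zariski-locally trivial; in fact one can choose a splitting so that $G \cong L \times \A^N$ as $L$-varieties (with $L$ acting diagonally, by left translation on itself and by the adjoint/conjugation action on $R_u(G) \cong \A^N$).

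The heart of the argument is the following chain of identifications. For any smooth quasi-projective $X$ with $G$-action, one has $K^G_*(X) \cong K^L_*(G \stackrel{L}{\times} X)$ is not quite the right move; rather, I would use that $K^G_*(X) \cong K^L_*(R_u(G) \backslash\!\backslash ?)$... Let me instead follow the standard route: restriction along $L \inj G$ corresponds, on the level of quotient stacks, to the map $[X/L] \to [X/G]$, and this map is an $R_u(G)$-bundle, i.e. $[X/L] = R_u(G) \times^G ?$. More precisely, $[X/L] \simeq [(G \times^L X)/G]$ where $G \times^L X \to X$ is the projection... The cleanest formulation: since $R_u(G)$ is a split unipotent group, it has a filtration $R_u(G) = U_0 \supset U_1 \supset \cdots \supset U_m = 1$ by normal subgroups of $G$ with each $U_i/U_{i+1} \cong \G_a^{n_i}$. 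By induction it suffices to treat the case $R_u(G) = \G_a^n$, i.e. to show that for a semidirect product $G = L \ltimes \G_a^n$ the restriction $K^G_*(X) \to K^L_*(X)$ is an isomorphism. Here I would observe that $X$ with its $G$-action, regarded with only the $\G_a^n \rtimes$-part quotiented, gives $[X/G] = [[X/\G_a^n]/L]$, and $[X/\G_a^n] \to X/\G_a^n$... the point is that $\G_a^n$ acts on the quasi-projective $X$ and one uses that $K$-theory is invariant under the $\G_a^n$-action in the sense that $K^{\G_a^n}_*(X) \cong K_*(X/\G_a^n)$ when the action is nice — but one cannot assume the action is free. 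The correct tool is rather homotopy invariance applied to the bundle $E L \times^{\G_a^n} X \to BL$-type construction at the level of Totaro/Edidin–Graham approximations: replace $K^G_*(X)$ by $\colim K_*((U \times X)/G)$ over representations $U$ of $G$ with free action outside high codimension, and similarly for $L$; the map $(U \times X)/L \to (U\times X)/G$ is then a Zariski-locally trivial $\A^n$-bundle, so homotopy invariance of $K$-theory of smooth schemes gives the isomorphism in the colimit.

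The main obstacle I anticipate is making rigorous that the relevant quotient $(U \times X)/L \to (U \times X)/G$ is an affine-space bundle (Zariski-locally trivial with fibre $\A^n$) and that all the spaces in sight are smooth and quasi-projective, so that one genuinely may invoke homotopy invariance of higher $K$-theory in the form $K_i(Y) \xrightarrow{\cong} K_i(\mathbb{A}^n_Y)$. This requires knowing that free quotients of quasi-projective schemes by the relevant groups exist as quasi-projective schemes (which is Edidin–Graham's setup, valid because the action on $X$ is linear and we may arrange the same for $U \times X$), and that the filtration of $R_u(G)$ by $G$-normal subgroups with vector-group quotients is compatible with forming these quotients step by step. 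Finally, the statement $R(G) \xrightarrow{\cong} R(L)$ is the special case $X = \Spec(k)$, and it also recovers the isomorphism $R(G) \cong R(G/R_uG)$ already used in the proof of Proposition~\ref{prop:finiteR}; alternatively it follows directly from the fact, noted there, that $R_u(G)$ acts trivially on every irreducible representation of $G$.
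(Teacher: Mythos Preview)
Your final approach has a genuine gap. You propose to compute $K^G_*(X)$ as $\colim K_*\big((U\times X)/G\big)$ over Edidin--Graham approximations $U$, but equivariant $K$-theory is \emph{not} defined or computed this way: $K^G_*(X)$ means the $K$-theory of the category of $G$-equivariant vector bundles on $X$, and the colimit you write down is (at best) its completion at the augmentation ideal, which is precisely the discrepancy the entire paper is about. So the affine-bundle/homotopy-invariance argument applied to $(U\times X)/L \to (U\times X)/G$ proves the wrong statement. Likewise, your intermediate attempt via $K^{\G_a^n}_*(X)\cong K_*(X/\G_a^n)$ only makes sense for free actions and does not apply here.

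Ironically, the step you dismissed as ``not quite the right move'' is exactly the paper's argument, but you wrote the Morita isomorphism backwards. The correct form is
\[
K^L_*(X)\;\cong\; K^G_*\!\left(G\overset{L}{\times} X\right),
\]
induction from $L$ up to $G$. Since $X$ is already a $G$-space, one has a $G$-equivariant isomorphism $G\overset{L}{\times}X \cong (G/L)\times X$, and $G/L\cong R_u(G)$ as a variety. One is then reduced to showing $K^G_*(R_u(G)\times X)\cong K^G_*(X)$, which follows from the composition series of the split unipotent group $R_u(G)$ with successive quotients $\G_a$ and equivariant homotopy invariance, applied inside the equivariant category rather than on Borel quotients. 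The filtration idea you sketched is the right endgame; the missing ingredient was the (correctly oriented) Morita identification that replaces the change of groups $L\hookrightarrow G$ by an honest $G$-equivariant affine-space bundle over $X$.
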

\begin{proof} Let $U = R_uG$ denote the unipotent radical of $G$.
Morita isomorphism ({\sl cf.} \cite[Theorem~1.10]{Thom1}) implies that 
there is an isomorphism $K^G_*\left(G, G  \stackrel{L}
{\underset {}{\times}} X\right) \xrightarrow{\cong} K^L_*(X)$. Since $X$ is a 
$G$-space, there is a $G$-equivariant isomorphism 
$G  \stackrel{L} {\underset {}{\times}} X \cong G/L \times X$. Thus we get an 
isomorphism 
\begin{equation}\label{eqn:Morita}
K^L_*(X) \cong K^G_*\left(G, G/L \times X\right)
\cong K^G_*\left(U \times X\right).
\end{equation}
Now, as $U$ is a split unipotent group, there is a composition series
of closed subgroups of $G$ such that each successive quotient is isomorphic
to the additive group ${\G}_a$. Using a descending induction and homotopy
invariance, one gets an isomorphism $K^G_*\left(U \times X\right) \cong
K^G_*(X)$.
\end{proof}
\section{Non-abelian Completion Theorem}
Let $G$ be a connected reductive group over $k$ with a maximal torus $T$.
Let $\fm$ be a maximal ideal of $R(G)$ and let $\wt{\fm}$ be a maximal
ideal of $R(T)$ whose inverse image in $R(G)$ is $\fm$. Let 
$S = T_{{\wt{\fm}}_k} \subset T$ be the support of $\wt{\fm}$ and let
$Z = Z_G(S)$ denote the centralizer of $S$ in $G$. Put $\ov{\fm} =
\wt{\fm} \cap R(Z)$. Let $X$ be a smooth quasi-projective variety with
a $G$-action and let $f: X^{\wt{\fm}} \inj X$ be the fixed locus for
the action of $S$ on $X$. It is known that ({\sl cf.} 
\cite[Proposition~3.1]{Thom0}) that $X^{\wt{\fm}}$ is smooth. 
Let $d$ be the codimension of $X^{\wt{\fm}}$ in $X$.
Let $\sN$ denote the conormal bundle of $X^{\wt{\fm}}$ in $X$ and 
put ${\lambda}_{-1}(\sN) = \left[{\sO}_{X^{\wt{\fm}}}\right] - 
\left[{\Lambda}^1 \sN \right] + \cdots \underline + 
\left[{\Lambda}^d \sN \right]$
as a class in $K^G_0\left(X^{\wt{\fm}}\right)$.
Recall from ~\ref{eqn:res} that there are
restriction maps of ${\wh{R(G)}}_{\fm}$-modules
\begin{equation}\label{eqn:res1}
{res}_{\fm} : {\wh{K^G_i(X)}}_{\fm} \to {\wh{K^Z_i(X)}}_{\fm} \to
{\wh{K^Z_i(X)}}_{\ov{\fm}}, \ \ {\rm and}
\end{equation}  
\begin{equation}\label{eqn:res2}
f^*: {\wh{K^Z_i(X)}}_{\ov{\fm}} \to {\wh{K^Z_i(X^{\wt{\fm}})}}_{\ov{\fm}}.
\end{equation}
We begin with the following extension of the localization result
\cite[Lemme~3.2]{Thom0} for the action of diagonalizable groups to the
case of arbitrary linear algebraic groups.
\begin{lem}\label{lem:Local}
Let $G$ be a connected reductive group acting on a smooth quasi-projective
variety $X$ as above. Let $\wt{\fm}$ be a maximal ideal of $R(T)$ with
$\fm = \wt{\fm} \cap R(G)$. Assume that the support $T_{\wt{\fm}}$ is
connected and is contained in the center of $G$. Then the pullback map 
\begin{equation}\label{eqn:Local1}
f^* : {K^G_i(X)}_{\fm} \to {K^G_i(X^{\wt{\fm}})}_{\fm} 
\end{equation}
is an isomorphism. In particular, the map
\begin{equation}\label{eqn:Local11}
f^* : {K^G_i(X)} {\otimes}_{R(G)} {\wh{R(G)}}_{\fm} \to
{K^G_i(X^{\wt{\fm}})} {\otimes}_{R(G)} {\wh{R(G)}}_{\fm}
\end{equation}
is also an isomorphism. 
\end{lem}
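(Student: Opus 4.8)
The plan is to deduce the statement from Thomason's localization theorem \cite[Lemme~3.2]{Thom0} for the diagonalizable group $T$, exploiting that the support $S := T_{{\wt{\fm}}_k}$ is a \emph{central} subtorus of $G$. Centrality is used twice: since $S$ is central, the fixed locus $X^{\wt{\fm}} = X^S$ is stable under all of $G$, hence under $N_G(T)$, so $f$ is $N_G(T)$-equivariant; and the Weyl group $W = N_G(T)/T$ acts trivially on $S$, so every maximal ideal of $R(T)$ lying over $\fm$ has support $S$.

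First I would settle the torus case. Let $\wt{\fm} = \wt{\fm}_1, \dots, \wt{\fm}_r$ be the maximal ideals of $R(T)$ lying over $\fm$; by \propref{prop:weyl} they form one $W$-orbit, so by centrality each $\wt{\fm}_j$ has support $S$ and fixed locus $X^{\wt{\fm}_j} = X^S = X^{\wt{\fm}}$. Thomason's theorem applied to $T$ and $\wt{\fm}_j$ gives that $f^* : K^T_i(X)_{\wt{\fm}_j} \to K^T_i(X^{\wt{\fm}})_{\wt{\fm}_j}$ is an isomorphism for every $i$ and $j$. By \propref{prop:finiteR} the ring $R(T) \otimes_{R(G)} R(G)_{\fm}$ is finite over the local ring $R(G)_{\fm}$ and its maximal ideals are exactly (the extensions of) the $\wt{\fm}_j$; since a map of modules over it that is an isomorphism after localization at each maximal ideal is an isomorphism, it follows that
\[
f^* : K^T_i(X) \otimes_{R(G)} R(G)_{\fm} \longrightarrow K^T_i(X^{\wt{\fm}}) \otimes_{R(G)} R(G)_{\fm}
\]
is an isomorphism of $R(G)_{\fm}$-modules, for all $i$.

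Now I would descend from $T$ to $G$. As $f$ is $N_G(T)$-equivariant and $\fm$ is an ideal of $R(G) = R(T)^W$, on which $W$ acts trivially, the displayed isomorphism is equivariant for the natural $W$-action on $T$-equivariant $K$-theory. Taking $W$-invariants --- exact since $|W|$ is invertible on $\Q$-modules, and commuting with the flat base change $R(G) \to R(G)_{\fm}$ --- and invoking the standard rational identification $K^G_i(Y)_{\Q} \cong \bigl(K^T_i(Y)_{\Q}\bigr)^W$ for a $G$-scheme $Y$ (with $Y = X$ and $Y = X^{\wt{\fm}}$), one obtains that $f^* : K^G_i(X)_{\fm} \to K^G_i(X^{\wt{\fm}})_{\fm}$ is an isomorphism, which is the first assertion. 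The second, ``in particular'', assertion then follows because $R(G)_{\fm}$ is Noetherian (\propref{prop:finiteR}), so $\widehat{R(G)}_{\fm}$ is flat over it, and $K^G_i(Y) \otimes_{R(G)} \widehat{R(G)}_{\fm} \cong K^G_i(Y)_{\fm} \otimes_{R(G)_{\fm}} \widehat{R(G)}_{\fm}$.

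The step I expect to be the real obstacle is the input used in the descent: the rational identification $K^G_i(Y)_{\Q} \cong \bigl(K^T_i(Y)_{\Q}\bigr)^W$. This is the genuinely non-abelian ingredient, and it is the sort of statement that fails integrally. The natural proof goes through the flag bundle $Y \times (G/B) \to Y$, combining the Morita isomorphism $K^G_i(Y \times G/B) \cong K^T_i(Y)$ with a Bruhat decomposition of $G/B$; this works cleanly when $R(T)$ is free over $R(G)$, i.e. when the derived group of $G$ is simply connected (precisely the case distinguished at the start of this section, where \propref{prop:SCC} is also available), and the general reductive case is then obtained by the reduction steps of Section~4. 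One could instead avoid the invariants formalism and argue by the localization sequence together with the self-intersection formula $f^* f_* = \lambda_{-1}(\sN) \cdot (-)$, reducing to the vanishing of $K^G_i(U)_{\fm}$ on $U = X \setminus X^{\wt{\fm}}$ and to the invertibility of $\lambda_{-1}(\sN)$ (which holds since the $S$-weights of the conormal bundle are all nontrivial while the closed point of $T$ defined by $\wt{\fm}$ generates $S$ densely, by \lemref{lem:closure}); but the vanishing on $U$ still requires a non-abelian input of the same flavor.
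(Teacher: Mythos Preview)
Your main line has a genuine gap exactly where you flag it: the identification $K^G_i(Y)_{\Q}\cong\bigl(K^T_i(Y)_{\Q}\bigr)^W$ is not available for an arbitrary reductive $G$. In this paper it is only obtained (via \cite[Proposition~8]{Merkur}, see~\eqref{eqn:Merk1}) under the extra hypothesis that the derived subgroup of $G$ is simply connected, and this is precisely why \propref{prop:NACS} carries that hypothesis while the present lemma does not. So your descent step, as written, does not go through here.

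The paper's proof is essentially the ``alternative'' you sketch at the end and then dismiss, and the point is that it does \emph{not} need a non-abelian input of the same strength. Two observations replace the Weyl-invariants formula. First, since $S\subset Z(G)$, \propref{prop:uniqueM} says $\wt{\fm}$ is the \emph{unique} maximal ideal of $R(T)$ lying over $\fm$ (your $r$ equals $1$), so for any $R(T)$-module one has $(-)_{\fm}\cong(-)_{\wt{\fm}}$. Second, one only needs that $K^G_i(Y)$ is a natural $R(G)$-linear \emph{split direct summand} of $K^T_i(Y)$, via the retraction $p_!\circ p^!=\id$ of \cite[Section~1.6]{Thom2}; this holds for every connected reductive $G$ and is much weaker than the invariants identification. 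With these in hand the paper argues as follows: $f_*$ is an isomorphism after localizing at $\fm$ by Thomason's concentration theorem \cite[Th\'eor\`eme~2.2]{Thom0}; the self-intersection formula $f^*f_*=\lambda_{-1}(\sN)\cdot(-)$ then reduces one to showing that $\alpha=\lambda_{-1}(\sN)$ acts invertibly on $K^G_i(X^{\wt{\fm}})_{\fm}$; by the split injection $p^!$ it suffices to check this on $K^T_i(X^{\wt{\fm}})_{\fm}=K^T_i(X^{\wt{\fm}})_{\wt{\fm}}$, where it is exactly \cite[Lemme~3.2]{Thom0}. The same split-summand trick would equally well give the vanishing of $K^G_i(U)_{\fm}$ (it embeds in $K^T_i(U)_{\wt{\fm}}=0$), so your worry that ``the vanishing on $U$ still requires a non-abelian input of the same flavor'' is unfounded: the retraction, not the invariants formula, is the correct tool.
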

\begin{proof} Since $X$ and $X^{\wt{\fm}}$ are smooth, we have the push
forward map $f_* : K^G_i(X^{\wt{\fm}}) \to K^G_i(X)$ such that the 
composite map $f^* \circ f_*$ is multiplication with $\alpha
= {\lambda}_{-1}(\sN)$ by the self-intersection formula 
({\sl cf.} \cite[Theorem~2.1]{VV}). The map $f_*$ is an isomorphism after 
localization at $\fm$ by \cite[Th{\'e}oreme~2.2]{Thom0}. Hence it suffices
to show that $\alpha$ acts as a unit on ${K^G_i(X^{\wt{\fm}})}_{\fm}$.

We have seen in the proof of Proposition~\ref{prop:NR} that there are 
natural maps of $R(G)$-modules 
\begin{equation}\label{eqn:split}
K^G_i\left(X^{\wt{\fm}}\right) \xrightarrow{p^{!}} 
K^T_i\left(X^{\wt{\fm}}\right) \xrightarrow{\cong}
K^G_*\left(G, G/T \times X^{\wt{\fm}}\right) \xrightarrow{p_{!}}
K^G_i\left(X^{\wt{\fm}}\right)
\end{equation}
such that the composite is the identity map 
({\sl cf.} \cite[Section~1.6]{Thom2}). Hence it suffices to show that
$\alpha$ acts as a unit on ${K^T_i\left(X^{\wt{\fm}}\right)}_{\fm}$. Since 
$S = {T_{\wt{\fm}}}_k \subset Z(G)$, we can apply 
Proposition~\ref{prop:uniqueM} to conclude that $\wt{\fm}$ is the unique 
maximal ideal of $R(T)$ which contracts to $\fm$ in $R(G)$. In particular, we 
have ${K^T_i\left(X^{\wt{\fm}}\right)}_{\fm} \cong 
{K^T_i\left(X^{\wt{\fm}}\right)}_{\wt{\fm}}$. Thus we need to show that
$\beta = p^{!}(\alpha)$ acts as a unit on
${K^T_i\left(X^{\wt{\fm}}\right)}_{\wt{\fm}}$. But this follows from
\cite[Lemme~3.2]{Thom0} since $\beta$ is the class of ${\lambda}_{-1}(\sN)$
in $K^T_0\left(X^{\wt{\fm}}\right)$.

Finally, the isomorphism of ~\ref{eqn:Local11} is proved by observing
that the left side of ~\ref{eqn:Local1} is same as 
${K^G_i(X)} {\otimes}_{R(G)} {R(G)}_{\fm}$ ({\sl cf.} 
\cite[Theorem~4.4]{Matsu}) and similarly the right side. Moreover,
~\ref{eqn:Local1} is an isomorphism of ${R(G)}_{\fm}$-modules and hence
remains an isomorphism when tensored with the ${R(G)}_{\fm}$-module
${\wh{R(G)}}_{\fm}$.
\end{proof}     
\begin{prop}\label{prop:NACS}
Let $G$ be a connected reductive group acting on a smooth quasi-projective
variety $X$ as above. Assume that the derived subgroup $G'$ of $G$ is
simply connected. Let $\wt{\fm}$ be a maximal ideal of $R(T)$ with
$\fm = \wt{\fm} \cap R(G)$. Assume that the support $T_{\wt{\fm}}$ is 
connected. Then the map 
\[
{\wh{K^G_i(X)}}_{\fm} \xrightarrow{{res}_{\fm}}
{\wh{K^Z_i(X)}}_{\ov{\fm}}
\]
is an isomorphism of $\wh{R(G)}_{\fm}$-modules.
\end{prop}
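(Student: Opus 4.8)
**The plan is to reduce the non-abelian statement to Thomason's abelian localization theorem by passing to the maximal torus, using the structure theory we have developed in Sections 2--3.** The key observation is that the map $\mathrm{res}_\fm$ sits in a larger picture involving $T$: we have restriction maps $R(G)\to R(Z)\to R(T)$, and by Proposition~\ref{prop:weyl} the fibre $\delta=f^{-1}(x)$ over $\fm$ inside $\Spec R(T)$ is exactly the $W$-orbit $Wt$, while the single point $t$ is the whole fibre of $\Spec R(T)\to\Spec R(Z)$ over $\ov\fm$ (this is where we need $Z$ connected reductive, guaranteed by Lemma~\ref{lem:cent}, together with Proposition~\ref{prop:uniqueM} applied to the pair $(Z,T)$ — note $S=T_{\wt\fm_k}$ is central in $Z$ by construction). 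The hypothesis that $G'$ is simply connected enters through Proposition~\ref{prop:SCC}, which tells us $Z'$ is simply connected as well, so the inductive structure is preserved and we may freely use the torus-splitting maps $p^!,p_!$ of \eqref{eqn:split} for both $G$ and $Z$.

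First I would record that completing at $\fm$ (resp. $\ov\fm$) is the same as localizing and then completing, and that localization is exact and commutes with the $R(T)$-module structure; combined with the finiteness of $R(G)\to R(Z)\to R(T)$ from Proposition~\ref{prop:finiteR}, the completion $\wh{K^G_i(X)}_\fm$ becomes a module over the semi-local completed ring $\wh{R(T)}_\fm\cong\prod_{j=1}^r \wh{R(T)}_{t_j}$, and similarly $\wh{K^Z_i(X)}_{\ov\fm}$ becomes a module over $\wh{R(T)}_{\ov\fm}\cong\wh{R(T)}_t$ since $t$ is the unique point over $\ov\fm$. Next I would invoke the splitting \eqref{eqn:split}: for any connected reductive $H$ with $T\subset H$, the composite $K^H_i(X)\xrightarrow{p^!}K^T_i(X)\xrightarrow{p_!}K^H_i(X)$ is the identity, so $K^H_i(X)_{\text{(completed appropriately)}}$ is a direct summand of the corresponding completion of $K^T_i(X)$. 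Thus $\mathrm{res}_\fm$ is sandwiched between the two splittings, and it suffices to understand how the completed $T$-theory decomposes and to identify which idempotent pieces come from $G$ versus $Z$.

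The heart of the argument is then a Weyl-group/idempotent bookkeeping. On $\wh{K^T_i(X)}_\fm\cong\bigoplus_{j=1}^r \wh{K^T_i(X)}_{t_j}$, the Weyl group $W$ permutes the summands transitively (Proposition~\ref{prop:weyl}); the $G$-theory is the $W$-invariant part and the $Z$-theory is the $W'$-invariant part, where $W'=\mathrm{St}_W(t)$. A standard averaging argument (available since we are in characteristic zero and working rationally) shows that the $W$-invariants of a transitively permuted direct sum $\bigoplus_j M_j$ with stabilizer $W'$ of the index $t$ map isomorphically onto the $W'$-invariants of the single summand $M_t$ via the projection; concretely the map $M^W\hookrightarrow M\twoheadrightarrow M_t$ lands in $M_t^{W'}$ and is an isomorphism with inverse given by $\frac{1}{|W|}\sum_{w\in W}w$. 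Applying this with $M_j=\wh{K^T_i(X)}_{t_j}$ and using the splittings to identify $\wh{K^G_i(X)}_\fm$ with $M^W$ and $\wh{K^Z_i(X)}_{\ov\fm}$ with $M_t^{W'}=(\wh{K^T_i(X)}_t)^{W'}$, and checking that under these identifications $\mathrm{res}_\fm$ is precisely the projection $M^W\to M_t$, we conclude $\mathrm{res}_\fm$ is an isomorphism.

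**The main obstacle** will be verifying carefully that $\mathrm{res}_\fm$, which is a priori just the restriction along $Z\hookrightarrow G$ followed by completion, coincides under the $p^!,p_!$ identifications with the naive projection onto the $t$-summand of the completed torus theory — i.e., the compatibility of the transfer/restriction maps for the triple $T\subset Z\subset G$ with the localizations. This requires knowing that $p^!_Z$ and $p^!_G$ are compatible (the restriction $K^G\to K^Z$ followed by $K^Z\xrightarrow{p^!}K^T$ equals $K^G\xrightarrow{p^!}K^T$), which follows from functoriality of the Morita/induction isomorphisms in \eqref{eqn:Morita}, and then chasing the idempotent decomposition of $R(T)_\fm$ through these maps. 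The rest — exactness of localization, the averaging isomorphism, Galois/Weyl invariance — is routine once the setup is in place; the only genuine inputs are Thomason's theorems cited in Lemma~\ref{lem:Local} (used implicitly through the already-proven abelian case) and the structural Propositions~\ref{prop:uniqueM}, \ref{prop:weyl}, \ref{prop:SCC} of the previous sections.
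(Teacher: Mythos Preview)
Your overall strategy coincides with the paper's: pass to $T$, decompose the $\fm$-adic completion of $K^T_i(X)$ as a product over the $W$-orbit $\{t_1,\dots,t_r\}$ of $\wt\fm$, identify the $G$- and $Z$-theories with the $W$- and $W'$-invariants respectively, and then use the transitive permutation action with stabilizer $W'$ to match them (the paper packages this last step as \cite[Lemmas~3.1--3.3]{ED3}).

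There is, however, a genuine gap at the step where you assert ``the $G$-theory is the $W$-invariant part and the $Z$-theory is the $W'$-invariant part.'' Your justification is the splitting $p^!,p_!$ of \eqref{eqn:split}, but that splitting exists for \emph{every} connected reductive group and only exhibits $K^G_i(X)$ as a direct summand of $K^T_i(X)$; it does not identify which summand. The paper obtains the identification $K^G_i(X)\xrightarrow{\cong}\bigl(K^T_i(X)\bigr)^W$ from Merkurjev's theorem \cite[Proposition~8]{Merkur}, which says that
\[
K^G_i(X)\otimes_{R(G)}R(T)\longrightarrow K^T_i(X)
\]
is an isomorphism precisely when $G'$ is simply connected; taking $W$-invariants and using $R(T)^W=R(G)$ (cf.\ \eqref{eqn:inv}) then yields the desired equality, and likewise for $Z$ once one knows $Z'$ is simply connected. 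This is exactly where the simply-connected hypothesis enters the argument, and why Proposition~\ref{prop:SCC} is invoked. Your explanation that the hypothesis is needed so that ``we may freely use the torus-splitting maps $p^!,p_!$ \dots\ for both $G$ and $Z$'' mislocates its role: those maps are available regardless, and without the Merkurjev input your sandwich argument cannot pin down the image of $p^!$ inside $K^T_i(X)$, so the subsequent averaging step has nothing to act on.
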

\begin{proof} Let $W$ and $W'$ be the Weyl groups of $G$ and $Z$ respectively.
We have the maps of $\Q$-algebras
\begin{equation}\label{eqn:map}
\xymatrix{
R(G) \ar[r]^{f^G_Z} \ar@/_1pc/[rr]_{f^G_T} & R(Z) \ar[r]^{f^Z_T} & R(T)}
\end{equation}
which are all finite by Proposition~\ref{prop:finiteR}.
Put $\ov{\fm} = \wt{\fm} \cap R(Z)$ and let ${f^G_Z}^{-1}(\fm) = \left\{
{\fm}_1, \cdots , {\fm}_s \right\}$ with ${\fm}_1 = \ov{\fm}$ and 
${f^Z_T}^{-1}({\fm}_i) = \left\{{\fm}_{i1}, \cdots, {\fm}_{is_i}\right\}$
with ${\fm}_{11} = \wt{\fm}$. Since ${f^Z_T}^{-1}({\fm}_1) = \wt{\fm} =
{\fm}_{11}$ by Proposition~\ref{prop:uniqueM}, we can write
\[
{f^G_T}^{-1}(\fm) = \{\wt{\fm}\} \bigcup 
\left(\stackrel{} {\underset {i \ge 2}{\bigcup}}
\stackrel{s_i} {\underset {j=1}{\bigcup}} \{ {\fm}_{ij}\}\right).
\]
In particular, the natural maps
\begin{equation}\label{eqn:map1}
{\wh{K^T_i(X)}}_{\fm} \longrightarrow \stackrel{} 
{\underset {i \ge 2, j \ge 1}{\prod}}
{\wh{K^T_i(X)}}_{{\fm}_{ij}} \bigoplus {\wh{K^T_i(X)}}_{\wt{\fm}}
\end{equation}
\begin{equation}\label{eqn:map2}
{\wh{K^T_i(X)}}_{\ov{\fm}} \longrightarrow {\wh{K^T_i(X)}}_{\wt{\fm}}
\end{equation}
are isomorphisms by \cite[Lemma~3.1]{ED3}.

Since $S = {T_{\wt{\fm}}}_k$ is connected and $G'$ is
simply connected, we see from Lemma~\ref{lem:cent} and 
Proposition~\ref{prop:SCC} that $Z$ is a connected reductive subgroup of $G$
of the same rank with simply connected commutator subgroup.
Hence the natural maps 
\begin{equation}\label{eqn:Merk}
K^G_i(X) {\otimes}_{R(G)} R(T)\xrightarrow{{\theta}^G_T} K^T_i(X) , \ \
K^Z_i(X) {\otimes}_{R(Z)} R(T) \xrightarrow{{\theta}^Z_T} K^T_i(X)
\end{equation}
isomorphisms by \cite[Proposition~8]{Merkur}.   
Taking the invariants for the Weyl groups both sides, we get isomorphisms
\begin{equation}\label{eqn:Merk1}
K^G_i(X) \xrightarrow{\cong} {\left(K^T_i(X)\right)}^W \ {\rm and} \
K^Z_i(X) \xrightarrow{\cong} {\left(K^T_i(X)\right)}^{W'}. 
\end{equation}
Taking the $\fm$-adic completions and using \cite[Lemma~3.2]{ED3}, we get 
isomorphisms of ${\wh{R(G)}}_{\fm}$-modules
\begin{equation}\label{eqn:Merk1}
{\wh{K^G_i(X)}}_{\fm} \xrightarrow{\cong} 
{\left({\wh{K^T_i(X)}}_{\fm}\right)}^W \ {\rm and} \
{\wh{K^Z_i(X)}}_{\ov{\fm}} \xrightarrow{\cong} 
{\left({\wh{K^T_i(X)}}_{\ov{\fm}}\right)}^{W'}. 
\end{equation}

For the action of $W$ on ${\wh{K^T_i(X)}}_{\fm}$ as in ~\ref{eqn:map1}
and for $i,j \ge 1$, put 
\[
W_{ij} = \left\{ w \in W | w {\wh{K^T_i(X)}}_{{\fm}_{ij}} =
{\wh{K^T_i(X)}}_{{\fm}_{ij}}\right\}.
\]
Then Proposition~\ref{prop:weyl} implies that $W_{11} = W'$ and 
${f^G_T}^{-1}(\fm)$ is the orbit of $\wt{\fm}$ for the action of $W$.
Using the fact that ${\fm}_{11} = \wt{\fm}$, we conclude using
\cite[Lemma~3.3]{ED3} that the natural map
\begin{equation}\label{eqn:Merk2}
{\left({\wh{K^T_i(X)}}_{\wt{\fm}}\right)}^{W'} \to
{\left({\wh{K^G_i(X)}}_{\fm}\right)}^{W}
\end{equation}
is an isomorphism. Moreover, ~\ref{eqn:map2} and ~\ref{eqn:Merk1} show
that the term on the left is same as ${\wh{K^Z_i(X)}}_{\ov{\fm}}$.
We conclude using ~\ref{eqn:Merk1} again that the map 
${\wh{K^G_i(X)}}_{\fm} \to {\wh{K^Z_i(X)}}_{\ov{\fm}}$ is an isomorphism of
${\wh{R(G)}}_{\fm}$-modules.
\end{proof}

Our final goal in this section is to deduce Theorem~\ref{thm:NAL} from
Proposition~\ref{prop:NACS} using the change of groups argument of \cite{ED3}.
Accordingly, we first establish the following steps necessary for the purpose.
Let $G$ be a connected and reductive group over $k$ with a fixed maximal
torus $T$.
\begin{lem}\label{lem:incl}
For a given maximal ideal $\fm$ of $R(G)$, we can embed $G$ as a closed 
subgroup of a group $H$ such that $H$ is product of general linear groups and 
$\fm$ is the only maximal ideal lying over $\fm \cap R(H)$ under the natural 
map $R(H) \to R(G)$.
\end{lem}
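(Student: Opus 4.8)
The plan is to build the group $H$ out of suitable tori and embeddings so that the restriction map $R(H)\to R(G)$ becomes ``controlled'' over the point $\fm$. First I would use the embedding $G\inj\GL_n$ for some $n$ together with the finiteness $R(\GL_n)\to R(G)$ (a special case of Proposition~\ref{prop:finiteR}); the issue is only that there may be several maximal ideals of $R(G)$ lying over $\fm\cap R(\GL_n)$. To separate them I would work at the level of the maximal torus. Fix the maximal torus $T=D_k(N)$ of $G$ and a lift $\wt\fm$ of $\fm$ to $R(T)$. Since $N$ is a finitely generated abelian group of rank $r$, choose a surjection $\Z^m\surj N$, which dualizes to a closed embedding $T\inj T'$ where $T'=D_k(\Z^m)=\G_m^m$ is a split torus; the induced map $R(T')=\Z[\Z^m]\surj R(T)$ is surjective, so $\wt\fm$ has a unique preimage $\fm'$ in $R(T')$ (the preimage of a maximal ideal under a surjection is a single maximal ideal). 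Now the key point is to realize a product of general linear groups $H$ that simultaneously contains $G$ and contains $T'$ as (a subgroup conjugate to) a maximal torus, with the two embeddings compatible with $T\inj T'$.

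The concrete construction I would carry out: write $T'=\G_m^m$ and let each $\G_m$ factor sit as the standard diagonal torus in a copy of $\GL_1$; but to pick up $G$ as well, I would instead proceed as follows. Embed $G\inj \GL_n$; then $T$ lands in the diagonal torus $D_n\subset\GL_n$ (after conjugating), and $D_n\cong\G_m^n$. The characters of $G$ coming from $\GL_n$ need not generate enough of $N$, so I would augment: pick finitely many characters $\chi_1,\dots,\chi_\ell\in N=X^*(T)$ whose classes, together with the images of the standard characters of $D_n$, generate $N$; each $\chi_j$ extends (after possibly enlarging $n$, replacing $\GL_n$ by $\GL_{n}\times\GL_1$ and letting $G$ act through $\chi_j$ on the new $\GL_1$ factor) to a character of an additional $\GL_1$ factor. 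Concretely, set $H=\GL_n\times\G_m^\ell$ with $G\inj H$ given by $g\mapsto(g,\chi_1(g),\dots,\chi_\ell(g))$ — note this is a closed embedding since the first coordinate already is. Then a maximal torus of $H$ is $D_n\times\G_m^\ell\cong\G_m^{n+\ell}$, and by construction the composite $X^*(\G_m^{n+\ell})\to X^*(T)=N$ is surjective. Hence the induced map $R(\text{max torus of }H)\to R(T)$ is surjective.

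It remains to deduce that $\fm$ is the only maximal ideal of $R(G)$ over $\fm_H:=\fm\cap R(H)$. By Corollary~\ref{cor:maximal1} (using $\Char k=0$, so Levi subgroups exist and the semi-simple picture applies), maximal ideals of $R(G)$ over $\fm_H$ correspond to the semi-simple conjugacy classes in $G$ meeting a fixed semi-simple conjugacy class $\Psi_H$ of $H$. Lift $\fm$ to $\wt\fm$ on $R(T)$ as above, and lift $\fm_H$ to $\fm_H'$ on $R(T_H)$, where $T_H$ is the maximal torus $\G_m^{n+\ell}$ of $H$; since $R(T_H)\surj R(T)$, there is a unique maximal ideal of $R(T)$ over $\fm_H'$, and that forces (via the commutative square of restriction maps $R(T_H)\to R(T)$, $R(H)\to R(G)$, together with the identification of maximal ideals with $W$-orbits in the torus from Lemma~\ref{lem:SSimple} and the finiteness from Proposition~\ref{prop:finiteR}) that any maximal ideal of $R(G)$ over $\fm_H$ lies over the same $W_H$-orbit, hence meets $\Psi_H\cap T$ in a single $W$-orbit, hence equals $\fm$. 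I expect the main obstacle to be the bookkeeping in this last step — carefully matching the $W$- and $W_H$-orbit descriptions through the (possibly non-surjective on the nose, only after conjugation) torus inclusions — rather than the construction of $H$ itself, which is a routine padding argument. If the orbit comparison proves awkward, an alternative is to argue directly: $R(G)_{\fm_H}$ is semi-local by Corollary~\ref{cor:maximal2}, its maximal ideals are the preimages of $\fm_H$, and surjectivity of $R(T_H)\to R(T)$ combined with $R(G)\hookrightarrow R(T)$ (Proposition~\ref{prop:finiteR}, injectivity on the torsion-free representation rings) pins down the fiber to one point.
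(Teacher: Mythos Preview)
Your construction of $H$ has a genuine gap, and the final uniqueness argument cannot be completed as stated.

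First, the map $g\mapsto(g,\chi_1(g),\dots,\chi_\ell(g))$ into $\GL_n\times\G_m^\ell$ is only defined if each $\chi_j$ is a character of $G$, not merely of $T$; that is, $\chi_j\in X^*(G)=X^*(T)^W$. For semisimple $G$ this group is trivial, so your padding contributes nothing and you are back to $H=\GL_n$. (Incidentally, once $T$ sits as a closed subgroup of the diagonal torus $D_n$, the map $X^*(D_n)\to X^*(T)$ is already surjective, so the padding was never needed for that purpose.)

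Second, and more seriously, surjectivity of $R(T_H)\to R(T)$ does \emph{not} force the fiber over $\fm_H$ in $\Spec R(G)$ to be a single point. What you need is that the map $T/W\to T_H/W_H$ is injective at the point in question, and torus-level surjectivity says nothing about how the Weyl groups interact. Concretely, take $G=\SL_2\times\SL_2\hookrightarrow\GL_4$ block-diagonally, so $T=\G_m^2\hookrightarrow D_4$ via $(s,t)\mapsto(s,s^{-1},t,t^{-1})$, $W=(\Z/2)^2$, $W_H=S_4$. For generic $(s_0,t_0)\in T$ the $S_4$-orbit of $(s_0,s_0^{-1},t_0,t_0^{-1})$ meets the image of $T$ in the $W$-orbit of $(s_0,t_0)$ \emph{and} in the $W$-orbit of $(t_0,s_0)$; these are distinct when $\{s_0,s_0^{-1}\}\neq\{t_0,t_0^{-1}\}$. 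So two maximal ideals of $R(G)$ lie over $\fm_H$, and since $X^*(G)=0$ there are no $\G_m$ factors to add. Your sentence ``hence meets $\Psi_H\cap T$ in a single $W$-orbit'' is exactly where this breaks.

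The missing idea is that $H$ must be chosen \emph{depending on} $\fm$. The paper first passes to $k$-points to realize $\fm$ as the conjugacy class of a specific semisimple element $h\in G(k)$, and then invokes \cite[Proposition~2.8]{ED1}, which produces an embedding $G\hookrightarrow H=\prod_i\GL_{n_i}$ (built from the eigenspace decomposition of a faithful representation under $h$) with the property $\Psi_H(h)\cap G=\Psi_G(h)$. This $h$-adapted choice is precisely what collapses the fiber to one point; a uniform $H$ independent of $\fm$ will not do.
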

\begin{proof} Put $X = {\rm Spec}\left(R(G)\right)$ and let $x$ denote the
closed point of $X$ defined by the maximal ideal $\fm$. Let $L$ denote
the residue field of $x$ and let $x_k$ be the image of $x$ under the 
inclusion $X(L) \subset X(k)$.  Then by Corollaries~\ref{cor:charC} and 
~\ref{cor:maximal1}, we have $R_k(G) \xrightarrow{\cong} C[G]$ and $x_k$
is given by a unique conjugacy class ${\fm}_{\Psi}$ of a semi-simple 
element $h$ of $G$. Hence we can use \cite[Proposition~2.8]{ED1}
to embed $G \inj H = \stackrel{} {\underset {i}{\prod}}GL_{n_i}$ as a closed
subgroup such that ${\Psi}_{H} (h) \cap G = {\Psi}_{G} (h)$.
Put $Y = {\rm Spec}\left(R(H)\right)$ and let $f: X \to Y$ be the induced
map, which is finite by Proposition~\ref{prop:finiteR}. Putting $y =
f(x)$, we see by Corollary~\ref{cor:maximal2} that $x_k$ is the only closed 
point lying over $y_k = f(x_k)$ under the finite map 
$X_k \xrightarrow{f_k} Y_k$. We conclude that $x$ is the only closed point
of $X$ lying over $y$.
\end{proof}

Let us fix a maximal ideal $\fm$ of $R(G)$ and  let $G \inj H$ be an 
embedding as a closed subgroup as in Lemma~\ref{lem:incl}.
Let $\wt{\fm}$ be a maximal ideal of $R(T)$ lying over $\fm$. Such an
ideal always exists as follows from Proposition~\ref{prop:finiteR}.
Let $T_{\wt{\fm}}$ be the support of $\wt{\fm}$ and let $Z$ denote the
centralizer of $S = T_{\wt{\fm}} {\otimes}_{\Q} k$ in $G$. Let $T'$ be a 
maximal torus of $H$ containing $T$ and $Z'$ the centralizer of $S$ in $H$. 
It is then clear that $Z = Z' \cap G$. Put
\[
\ov{\fm} = {\wt{\fm}} \cap R(Z), \  {\fm}' = {\fm} \cap R(H), 
\ {\wt{\fm}}' = {\wt{\fm}} \cap R(T'), \
{\rm and} \ {\ov{\fm}}' = {\ov{\fm}} \cap R(Z').
\]   
Note that the map $R(T') \to R(T)$ is surjective and ${\wt{\fm}}'$ is the
unique maximal ideal of $R(T')$ lying over $\wt{\fm}$.
Let $X$ be a smooth quasi-projective variety over $k$ with an action of $G$
and let $f : X^{\wt{\fm}} \inj X$ be the inclusion of the fixed locus for the
action of $S$ on $X$. Put $X' =  H \stackrel{G} {\underset {}{\times}} X$.
Then $H$ acts naturally on $X'$. Let $f': {X'}^{{\wt{\fm}}'} \inj
X'$ denote the inclusion of the fixed locus for the action of 
$S \subset T'$ on $X'$. Then $X'$ and ${X'}^{{\wt{\fm}}'}$ are also smooth
and $f'$ is a regular embedding. Since $S$ is contained in the center of $Z$,
we see that $X^{\wt{\fm}}$ is $Z$-invariant and similarly 
${X'}^{{\wt{\fm}}'}$ is $Z'$-invariant. Thus there is a natural map
$Z' \times  X^{\wt{\fm}} \to {X'}^{{\wt{\fm}}'}$ which in fact descends to a 
map ${\phi}: Z' \stackrel{Z} {\underset {}{\times}} X^{\wt{\fm}} \to
{X'}^{{\wt{\fm}}'}$.
\begin{lem}\label{lem:quotient}
The map $\phi$ is an isomorphism of $Z'$-spaces.
\end{lem}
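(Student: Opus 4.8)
The plan is to exhibit $\phi$ as an isomorphism by analyzing it locally and using the universal property of the contracted product, then reducing to the manifestly true statement that $X^{\wt{\fm}}$ is the preimage of ${X'}^{{\wt{\fm}}'}$ under the open immersion $X \hookrightarrow X'$. First I would recall that the natural map $X = G \stackrel{G}{\times} X \to H \stackrel{G}{\times} X = X'$ is a locally closed immersion whose image is $eG \stackrel{G}{\times} X$, and more usefully that $X'$ is covered by the $H$-translates of this image; equivalently, $X' \cong H \times^{G} X$ is the quotient of $H \times X$ by the free $G$-action $g\cdot(h,x) = (hg^{-1}, gx)$, and the projection $H \times X \to X'$ is a $G$-torsor. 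The same description applies to $Z' \stackrel{Z}{\times} X^{\wt{\fm}}$ as a quotient of $Z' \times X^{\wt{\fm}}$ by the free $Z$-action.

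The key step is to identify ${X'}^{{\wt{\fm}}'}$, the fixed locus of $S \subset T' \subset H$ acting on $X'$, with the image of $Z' \times X^{\wt{\fm}}$ inside $X'$. Since $S$ lies in the center of $Z'$ (as $Z' = Z_H(S)$), the $S$-action on $X' = H \stackrel{G}{\times} X$ is given by $s\cdot[h,x] = [sh, x] = [hs',x]$ only when $s$ normalizes the way $S$ sits — more precisely I would compute the $S$-action directly: writing a point of $X'$ as $[h,x]$, one has $s\cdot[h,x] = [sh, x]$, and this equals $[h,x]$ iff there is $g \in G$ with $sh = hg^{-1}$ and $gx = x$, i.e. $h^{-1}sh \in G$ and $h^{-1}sh$ fixes $x$. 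The condition $h^{-1}sh \in G$ for all $s \in S$ forces $h^{-1}Sh \subset G$; since $S$ and $h^{-1}Sh$ are both tori of the reductive group $H$ mapping into $G$ under... — here I would instead argue that the locus of $h$ with $h^{-1}Sh \subset G$ is exactly $Z' \cdot G = \{h : h \in Z_H(S)\cdot G\}$ because two subtori of $G$ that are $H$-conjugate are $Z'$-conjugate up to $G$ (using that $S \subset Z(Z')$), and on that locus the residual condition says $x \in X^{h^{-1}sh}$ for the relevant conjugate of $S$, which after absorbing into $Z'$-translation becomes $x \in X^{S} = X^{\wt{\fm}}$. Thus set-theoretically the image of $Z' \times X^{\wt{\fm}} \to X'$ is precisely ${X'}^{{\wt{\fm}}'}$, and this already shows $\phi$ is surjective.

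For injectivity and the scheme-theoretic statement, I would use that $Z' \stackrel{Z}{\times} X^{\wt{\fm}}$ is the quotient of $Z' \times X^{\wt{\fm}}$ by the free $Z$-action, and that the map $Z' \times X^{\wt{\fm}} \to {X'}^{{\wt{\fm}}'}$ is $Z$-invariant with fibers exactly the $Z$-orbits: if $[z_1, x_1] = [z_2, x_2]$ in $X'$ then $z_1^{-1}z_2 \in G$ fixes the relevant points, and one checks $z_1^{-1}z_2 \in Z' \cap G = Z$ (using $Z = Z' \cap G$, which is noted in the excerpt just before the lemma). Since both source and target are smooth (stated above: $X'$ and ${X'}^{{\wt{\fm}}'}$ are smooth, and $Z' \stackrel{Z}{\times} X^{\wt{\fm}}$ is smooth as $X^{\wt{\fm}}$ is smooth), $\phi$ is a bijective morphism of smooth varieties over an algebraically closed field of characteristic zero, hence an isomorphism; alternatively one checks directly that $\phi$ is étale by a tangent-space computation at fixed points, combined with the torsor descriptions, to conclude it is an isomorphism without invoking Zariski's main theorem. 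The $Z'$-equivariance is immediate from the construction since the map $Z' \times X^{\wt{\fm}} \to {X'}^{{\wt{\fm}}'}$ is $Z'$-equivariant for left multiplication on the first factor.

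The main obstacle I anticipate is the precise identification in the second step: showing that the $S$-fixed locus in $H \stackrel{G}{\times} X$ is covered by $Z'$-translates of $eG \stackrel{G}{\times} X^{\wt{\fm}}$ requires knowing that any element $h \in H$ with $h^{-1}Sh \subseteq G$ can be written $h = z' g$ with $z' \in Z'$ and $g \in G$. This is where the reductive structure theory enters: $S$ and $h^{-1}Sh$ are central subtori sitting inside the reductive group $Z$ in two ways, and one needs a conjugacy statement of the form "$G$-conjugate central subtori of a Levi are $Z'$-conjugate," which should follow from the fact that $Z = Z_G(S)$ is itself determined by $S$ together with the conjugacy of maximal tori. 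I would handle this carefully, possibly reducing to the case $X = \mathrm{Spec}(k)$ first to isolate the purely group-theoretic content, and then propagate the statement to general $X$ using the torsor/quotient descriptions, which are formal once the group-theoretic fact is in place.
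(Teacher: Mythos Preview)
Your approach differs substantially from the paper's. The paper does not compute the $S$-fixed locus of $X'$ directly; instead it finds a dense open $U\subset S$ on which, for every $h\in U$, one has $Z_H(S)=Z_H(h)$, $Z_G(S)=Z_G(h)$, $X^S=X^h$ and $(X')^S=(X')^h$ (the first via \cite[Proposition~8.18]{Borel}, the fixed-locus statements by reducing to a linear representation), and then invokes \cite[Lemma~3.5]{ED1} for that single semisimple element.

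Your direct approach has a genuine gap, and it is exactly the step you flagged. The assertion that $\{\eta\in H:\eta^{-1}S\eta\subset G\}=Z'\cdot G$ is \emph{false} for an arbitrary closed embedding of reductive groups. Take $G$ to be the diagonal torus in $H=GL_2$ and $S=\{\,{\rm diag}(t,1)\,\}$. Then $Z'=Z_H(S)=G$, so $Z'G=G$; but the permutation matrix $\eta$ satisfies $\eta^{-1}S\eta=\{\,{\rm diag}(1,t)\,\}\subset G$ while $\eta\notin G$. With $X$ a point one finds that $(X')^S$ has two points whereas $Z'\times^{Z}X^S$ is a single point, so $\phi$ is not even bijective. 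Thus no ``general conjugacy of subtori'' argument can rescue this: two $H$-conjugate subtori of $G$ need not be $G$-conjugate.

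What makes the lemma true is that the embedding $G\hookrightarrow H$ in the setup is the special one of Lemma~\ref{lem:incl} (built from \cite[Proposition~2.8]{ED1}), which ensures $\Psi_H(t_k)\cap G=\Psi_G(t_k)$ for the semisimple element $t_k$ attached to $\wt{\fm}$. Since $t_k$ topologically generates $S$ (Lemma~\ref{lem:closure}), one has $Z_H(t_k)=Z'$; then $\eta^{-1}S\eta\subset G$ forces $\eta^{-1}t_k\eta\in\Psi_H(t_k)\cap G=\Psi_G(t_k)$, giving $\eta\in Z_H(t_k)\,G=Z'G$. This is precisely the content hidden in the paper's citation of \cite[Lemma~3.5]{ED1}. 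Your argument can be repaired by inserting this step, but as written it omits the one hypothesis that makes the statement hold.
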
   
\begin{proof} We first observe from \cite[Proposition~8.18]{Borel} that
there is a dense open subset $U_1$ of $S$ such that $Z_H(S) = Z_H(h)$
and hence $Z_G(S) = Z_G(h)$ for all $h \in U_1$. Next we claim that there
is a dense open subset $U_2$ of $S$ such that $X^S = X^h$ and ${X'}^S =
{X'}^h$ for all $h \in U_2$.

Since $S$ is a torus and $X$ (and hence $X'$) has ample line bundles,
there is a finite cover of $X$ and $X'$ by $S$-invariant affine open
sets ({\sl cf.} \cite[Corollary~3.11]{Sumihiro}). Thus by choosing our
desired open set as the finite intersection of open subsets for the 
element of the affine cover, we can reduce to the case when $X$ and 
$X'$ are affine. Then there is a linear representation $V$ of
$S$ such that $X \inj V$ is an $S$-equivariant closed embedding. Since
$X^S = V^S \cap X$ and $X^h = V^h \cap X$, we can assume that $X = V$.
Then we can write $V = V_0 \oplus V_1 \oplus \cdots \oplus V_r$ as a
direct sum of $S$-invariant subspaces, where $S$ acts on $V_i$ by a 
character ${\chi}_i$ and ${\chi}_i \neq {\chi}_j$ for $i \neq j$.
We can find a dense open subset $U_2 \subset S$ such that ${\chi}_i(h) 
\neq {\chi}_j(h)$ for $i \neq j$ and $h \in U_2$. For such $h$, it is
easy to see that $V_0 = V^S = V^h$. The same proof works also for
$X'$. This proves our claim.

Taking $U = U_1 \cap U_2$, we see that $U$ is a dense open subset of $S$
such that for all $h \in U$, one has $Z_H(S) = Z_H(h)$, $X^S = X^h$
and ${X'}^S = {X'}^h$. In particular, we get
\[
Z' \stackrel{Z} {\underset {}{\times}} X^{\wt{\fm}} \cong
Z_{H}(h) \stackrel{Z_G(h)} {\underset {}{\times}} X^h \ {\rm and} \
{X'}^{{\wt{\fm}}'} \cong {X'}^h.
\]
The lemma now follows from this and \cite[Lemma~3.5]{ED1}.
\end{proof}
\begin{remk}\label{remk:quotient1} Although the above lemma has been stated
for the maximal ideals of $R(G)$, it is easy to see that the proof works
for all prime ideals as long as their supports are connected.
\end{remk}
\begin{prop}\label{prop:NAL*0}
Let the notations be as above and consider the commutative diagram
\begin{equation}\label{eqn:NAL*1}
\xymatrix{
{\wh{K^H_i(X')}}_{{\fm}'} \ar[r] \ar@/^1pc/[rr]^{{f'}^!_{{\fm}'}} \ar[d] &
{\wh{K^{Z'}_i(X')}}_{{\fm}'} \ar[d] \ar[r]_{{f'}^*} & 
{\wh{K^{Z'}_i\left(X'^{{\wt{\fm}}'}\right)}}_{{\ov{\fm}}'} \ar[d] \\
{\wh{K^G_i(X)}}_{\fm} \ar[r] \ar@/_1pc/[rr]_{{f}^!_{\fm}} &
{\wh{K^Z_i(X)}}_{\fm} \ar[r]^{f^*} &
{\wh{K^Z_i\left(X^{\wt{\fm}}\right)}}_{\ov{\fm}}}
\end{equation}
The map $f^!_{\fm}$ is an isomorphism if ${f'}^!_{{\fm}'}$ is so.
\end{prop}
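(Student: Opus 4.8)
The plan is to reduce the assertion to the invertibility of the two outer vertical maps of the commutative diagram~\ref{eqn:NAL*1}, namely
\[
\wh{K^H_i(X')}_{\fm'}\ \longrightarrow\ \wh{K^G_i(X)}_{\fm}
\qquad\text{and}\qquad
\wh{K^{Z'}_i\left({X'}^{{\wt\fm}'}\right)}_{{\ov\fm}'}\ \longrightarrow\ \wh{K^Z_i\left(X^{\wt\fm}\right)}_{\ov\fm}.
\]
Once these are known to be isomorphisms the conclusion is formal: commutativity of the two squares yields $f^!_{\fm}\circ(\text{left vertical})=(\text{right vertical})\circ{f'}^!_{\fm'}$, so $f^!_{\fm}$ is an isomorphism as soon as ${f'}^!_{\fm'}$ is. The middle vertical map enters only through the commutativity of the squares; its own behaviour plays no role, and the squares commute by functoriality of the restriction maps together with functoriality of the Gysin pullbacks $f^{*}$ and ${f'}^{*}$ along the regular embeddings $f$ and $f'$.

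For the left vertical map I would invoke the Morita isomorphism \cite[Theorem~1.10]{Thom1} --- exactly as in the proof of Proposition~\ref{prop:NR} --- which identifies $K^H_i(X')=K^H_i(H\stackrel{G}{\times}X)$ with $K^G_i(X)$, compatibly with the $R(H)$-module structures coming from the restriction homomorphism $R(H)\to R(G)$. This homomorphism is finite by Proposition~\ref{prop:finiteR}, and by Lemma~\ref{lem:incl} the ideal $\fm$ is the unique maximal ideal of $R(G)$ lying over $\fm'=\fm\cap R(H)$; hence $R(G)/\fm'R(G)$ is a finite-dimensional algebra over the residue field of $\fm'$, so it is Artinian and local with maximal ideal the image of $\fm$, and therefore $\fm^{N}\subseteq\fm'R(G)\subseteq\fm$ for some $N$. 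Consequently the $\fm'$-adic and $\fm$-adic filtrations on every $R(G)$-module are cofinal, so the $\fm'$-adic completion of $K^G_i(X)$ coincides with its $\fm$-adic completion; under the Morita identification this is precisely the statement that the left vertical map is an isomorphism.

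The right vertical map is handled in the same way, with Lemma~\ref{lem:quotient} as input: since $\phi\colon Z'\stackrel{Z}{\times}X^{\wt\fm}\xrightarrow{\cong}{X'}^{{\wt\fm}'}$ is an isomorphism of $Z'$-spaces, the Morita isomorphism gives $K^{Z'}_i({X'}^{{\wt\fm}'})\cong K^Z_i(X^{\wt\fm})$ as modules over $R(Z')$ acting through the restriction map $R(Z')\to R(Z)$, which is again finite by Proposition~\ref{prop:finiteR}. Thus it suffices to show that $\ov\fm$ is the unique maximal ideal of $R(Z)$ lying over $\ov\fm'=\ov\fm\cap R(Z')$. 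For this I would work in the commutative square of restriction maps with vertices $R(Z')$, $R(Z)$, $R(T')$, $R(T)$, whose bottom edge $R(T')\to R(T)$ is surjective because $T\subseteq T'$. Let $\mathfrak n$ be a maximal ideal of $R(Z)$ with $\mathfrak n\cap R(Z')=\ov\fm'$; using that $R(Z)\to R(T)$ is finite and injective (Proposition~\ref{prop:finiteR}), choose a maximal ideal $\mathfrak n_T$ of $R(T)$ lying over $\mathfrak n$, and let $\mathfrak n_{T'}\subset R(T')$ be its preimage. Chasing the square, $\mathfrak n_{T'}$ contracts to $\ov\fm'$ in $R(Z')$. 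Now $Z'=Z_H(S)$ is connected reductive by Lemma~\ref{lem:cent}, $S=T_{\wt\fm}\otimes_{\Q}k$ is a connected subtorus lying in the centre of $Z'$, the support of $\wt\fm'$ in $T'$ equals $T_{\wt\fm}$ (by~\ref{eqn:max}, as $\wt\fm'$ is the preimage of $\wt\fm$ under the surjection $R(T')\to R(T)$), and $\wt\fm'$ itself contracts to $\ov\fm'$ in $R(Z')$. Hence Proposition~\ref{prop:uniqueM}, applied with $G$ replaced by $Z'$, shows that $\wt\fm'$ is the only maximal ideal of $R(T')$ contracting to $\ov\fm'$; in particular $\mathfrak n_{T'}=\wt\fm'$. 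Surjectivity of $R(T')\to R(T)$ then forces $\mathfrak n_T=\wt\fm$, whence $\mathfrak n=\wt\fm\cap R(Z)=\ov\fm$, as required. Given this, the completion comparison above (now for the finite map $R(Z')\to R(Z)$ and the ideals $\ov\fm'$, $\ov\fm$) shows the right vertical map is an isomorphism.

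I expect the real content to be concentrated in this last step --- the uniqueness of $\ov\fm$ over $\ov\fm'$ --- since that is where the results on the geometry of the representation rings from Section~2 (through Proposition~\ref{prop:uniqueM} and Lemma~\ref{lem:incl}) genuinely enter. The rest is Morita theory plus the standard fact that completing a module at either of two ideals related by a finite ring map with a single prime above gives the same answer. I would also check, without expecting trouble, that the two Morita isomorphisms are compatible with the respective representation-ring module structures and that the squares in~\ref{eqn:NAL*1} commute.
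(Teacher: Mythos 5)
Your proof is correct and follows essentially the same route as the paper: the paper introduces an auxiliary diagram whose top and bottom rows factor exactly your two outer vertical maps into the Morita isomorphism (for the bottom, via Lemma~\ref{lem:quotient}) followed by a change of the completion ideal, and the uniqueness of $\ov{\fm}$ over $\ov{\fm}'$ is established there by the same chase through $R(Z'), R(Z), R(T'), R(T)$ using Proposition~\ref{prop:uniqueM}. The only cosmetic difference is that you prove the cofinality of the $\fm'$-adic and $\fm$-adic filtrations inline, where the paper cites \cite[Lemma~3.1]{ED3}.
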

\begin{proof} We consider the following commutative diagram.
\begin{equation}\label{eqn:NAL*2}
\xymatrix{
{\wh{K^H_i(X')}}_{{\fm}'} \ar[d]_{{f'}^!_{{\fm}'}} \ar[r] &
{\wh{K^G_i(X)}}_{{\fm}'} \ar[r] \ar[d] &
{\wh{K^G_i(X)}}_{\fm} \ar[d]^{{f}^!_{\fm}} \\
{\wh{K^{Z'}_i\left(X'^{{\wt{\fm}}'}\right)}}_{{\ov{\fm}}'} \ar[r] &
{\wh{K^Z_i\left(X^{\wt{\fm}}\right)}}_{{\ov{\fm}}'} \ar[r] &
{\wh{K^Z_i\left(X^{\wt{\fm}}\right)}}_{\ov{\fm}}}
\end{equation}
The first horizontal arrow in the top row is isomorphism by the Morita
isomorphism and the second one is an isomorphism by 
Proposition~\ref{prop:finiteR}, Lemma~\ref{lem:incl}
and \cite[Lemma~3.1]{ED3}.
The left vertical arrow is an isomorphism by our assumption.
The first horizontal arrow in the bottom row is an isomorphism by
Lemma~\ref{lem:quotient} and the Morita isomorphism.
We conclude that the middle vertical arrow is an isomorphism.
We now claim that $\ov{\fm}$ is the only maximal ideal of $R(Z)$ which 
contracts to ${\ov{\fm}}'$ in $R(Z')$. 

To see this, suppose ${\ov{\fm}}_1$
is another such maximal ideal of $R(Z)$. Since the map $R(Z) \to R(T)$
is finite and dominant by Proposition~\ref{prop:finiteR}, we can choose a
maximal ideal ${\wt{\fm}}_1$ of $R(T)$ which contracts to ${\ov{\fm}}_1$
in $R(Z)$. Since the map $R(T') \to R(T)$ is surjective, 
${\wt{\fm}}_1$ is the unique maximal ideal of $R(T)$ lying over
${\wt{\fm}}'_1 = {\wt{\fm}}_1 \cap R(T')$ of $R(T')$.
Hence we get two distinct maximal ideals ${\wt{\fm}}'$ and
${\wt{\fm}}'_1$ of $R(T')$ whose intersection with $R(Z')$ is same
as ${\wt{\fm}}' \cap R(Z') = {\ov{\fm}}'$, which is a contradiction by
Proposition~\ref{prop:uniqueM} as $S$ is contained in the center of $Z'$.
This proves the claim.   
Using the claim and {\sl loc. cit.}, Lemma~3.1, we see that the second
horizontal arrow on the bottom row in ~\ref{eqn:NAL*2} is an isomorphism. 
A diagram chase shows that the map $f^!_{\fm}$ is also an isomorphism.
\end{proof}

{\bf{Proof of Theorem~\ref{thm:NAL}:}}
We first show the isomorphism of the horizontal arrows in ~\ref{eqn:NAL*}.
The isomorphism of ${\wt{f}}^*$ and $f^*$ follows from 
Lemma~\ref{lem:Local}. Thus it suffices to show that the
composite horizontal maps in the top and the bottom rows are isomorphisms.
We embed $G$ as a closed subgroup of $H$ as in Lemma~\ref{lem:incl}.
We first show that the composite map $f^!_{\fm}$ in the bottom row of
~\ref{eqn:NAL*} is an isomorphism. In view of Proposition~\ref{prop:NAL*0},
it suffices to show that ${f'}^!_{{\fm}'}$ is an isomorphism. Hence we can
assume that the derived subgroup $G'$ of $G$ is simply connected.
In this case, the conclusion follows from Lemma~\ref{lem:Local}
and Proposition~\ref{prop:NACS}.

Now we prove the isomorphism of the composite map ${\wt{f^!}}_{\fm}$ in the
top row of ~\ref{eqn:NAL*}.
We first assume that the derived subgroup of $G$ is simply connected.
This implies from Lemma~\ref{lem:cent} and Proposition~\ref{prop:SCC}
that $Z$ also has the similar property. Hence the natural map
\begin{equation}\label{eqn:GZ}
K^G_i(X) {\otimes}_{R(G)} R(Z) \to K^Z_i(X)
\end{equation}
is an isomorphism by \cite[Proposition~2.1]{ED3}.
Next we take $X = {\rm Spec}(k)$ in the bottom row of ~\ref{eqn:NAL*}
to see that map ${\wh{R(G)}}_{\fm} \to {\wh{R(Z)}}_{\ov{\fm}}$ is an
isomorphism of complete local rings. Tensoring this isomorphism with
$K^G_i(X)$, we get 
\[
\xymatrix@C.9pc{
{K^G_i(X) {\otimes}_{R(G)} {\wh{R(G)}}_{\fm}} \ar[r]^{\cong} \ar[rrd] &
{K^G_i(X) {\otimes}_{R(G)} {\wh{R(Z)}}_{\ov{\fm}}} \ar@{=}[r] &
{\left(K^G_i(X) {\otimes}_{R(G)} R(Z)\right) {\otimes}_{R(Z)} 
{\wh{R(Z)}}_{\ov{\fm}}} \ar[d]^{\cong} \\
& &  {K^Z_i(X) {\otimes}_{R(Z)} {\wh{R(Z)}}_{\ov{\fm}}},}
\] 
where the vertical arrow is isomorphism by ~\ref{eqn:GZ}. The proof in the
simply connected case is now completed by Lemma~\ref{lem:Local}.

We now prove the isomorphism of ${\wt{f}}^!_{\fm}$ in ~\ref{eqn:NAL*} in the
general case. So let $G$ be a connected and reductive group with a fixed
maximal torus $T$. Let $\fm$ be the given maximal ideal of $R(G)$.
We embed $G$ as a closed subgroup of $H$ as in Lemma~\ref{lem:incl}
and follow the same notations as above. We have seen in the proof of
Proposition~\ref{prop:NAL*0} that in this case, $\ov{\fm}$ is the only
maximal ideal of $R(Z)$ whose intersection with $R(Z')$ is
${\ov{\fm}}'$. Since the maps of these representation rings are finite
by Proposition~\ref{prop:finiteR}, we see that ${\wh{R(G)}}_{{\fm}'}
\cong {\wh{R(G)}}_{\fm}$ and ${\wh{R(Z)}}_{{\ov{\fm}}'} \cong
{\wh{R(Z)}}_{{\ov{\fm}}}$. We consider the following commutative diagram.
\begin{equation}\label{eqn:final}
\xymatrix{
K^H_i(X') {\otimes}_{R(H)} {\wh{R(H)}}_{{\fm}'} \ar[r]^{{\wt{f'}}^!_{{\fm}'}}
\ar[d]^{\cong}_{l_1} &
K^{Z'}_i({X'}^{{{\wt{\fm}}'}}) {\otimes}_{R(Z')} {\wh{R(Z')}}_{{\ov{\fm}}'}  
\ar[d]_{\cong}^{r_1} \\  
K^G_i(X) {\otimes}_{R(H)} {\wh{R(H)}}_{{\fm}'} \ar[d]^{\cong}_{l_2} & 
K^{Z}_i({X}^{{\wt{\fm}}}) {\otimes}_{R(Z')} {\wh{R(Z')}}_{{\ov{\fm}}'}  
\ar[d]_{\cong}^{r_2} \\  
K^G_i(X) {\otimes}_{R(G)} \left(R(G) {\otimes}_{R(H)}  
{\wh{R(H)}}_{{\fm}'}\right) \ar[d]^{\cong}_{l_3} &
K^{Z}_i({X}^{{\wt{\fm}}})  {\otimes}_{R(Z)} \left(R(Z) {\otimes}_{R(Z')}  
{\wh{R(Z')}}_{{\ov{\fm}}'}\right) \ar[d]_{\cong}^{r_3} \\
K^G_i(X) {\otimes}_{R(G)} {\wh{R(G)}}_{{\fm}'} \ar[d]^{\cong}_{l_4} &
K^{Z}_i({X}^{{\wt{\fm}}})  {\otimes}_{R(Z)} 
{\wh{R(Z)}}_{{\ov{\fm}}'} \ar[d]_{\cong}^{r_4} \\
K^G_i(X) {\otimes}_{R(G)} {\wh{R(G)}}_{\fm} \ar[r]^{{\wt{f}}^!_{\fm}}  &
K^{Z}_i({X}^{{\wt{\fm}}})  {\otimes}_{R(Z)} 
{\wh{R(Z)}}_{{\ov{\fm}}}}
\end{equation}
Since the derived subgroup of $H$ is simply connected by our choice of $H$,
the isomorphism of the top horizontal arrow in ~\ref{eqn:final} is 
already shown. The arrows $l_1$ and $r_1$ are the Morita isomorphisms.
The isomorphism of $l_2$ and $r_2$ is obvious. Next we see from 
~\ref{eqn:inv} that $R(G)$ is the ring of invariants for the Weyl group 
action on $R(T)$ and hence is noetherian ({\sl cf.} \cite[Lemma~4.7]{KV}). 
The isomorphism of $l_3$ and $r_3$ is now an immediate consequence 
Proposition~\ref{prop:finiteR} and \cite[Theorem~8.7]{Matsu}. 
Finally, we have already concluded above that the arrows $l_4$ and $r_4$ are
isomorphisms. A diagram chase shows that the bottom horizontal arrow
is an isomorphism. This completes the proof of the isomorphism of the
horizontal arrows in ~\ref{eqn:NAL*}. 

Since we have just observed that $R(G)$ is noetherian,  the
injectivity of the vertical arrows in ~\ref{eqn:NAL*}  follows from
the following elementary lemma.
$\hfill \square$
\\
\begin{lem}\label{lem:inj*}
Let $A$ be a commutative noetherian ring and let $\wh{A}$ denote the
completion of $A$ with respect to a given ideal $I$ in $A$.
Then for any $A$-module $M$, the natural map $M {\otimes}_A {\wh{A}} \to
\wh{M}$ is injective.
\end{lem}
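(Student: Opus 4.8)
The plan is to prove Lemma~\ref{lem:inj*} by reducing from an arbitrary $A$-module $M$ to finitely generated submodules, where the Artin--Rees lemma gives flatness-type control. First I would recall that completion along $I$ commutes with filtered colimits of \emph{finitely generated} modules in a weak sense, and that tensoring with $\wh{A}$ always commutes with arbitrary colimits, so the natural map $M \otimes_A \wh{A} \to \wh{M}$ is compatible with writing $M = \colim_{\lambda} M_{\lambda}$ as the filtered union of its finitely generated submodules. Since a filtered colimit of injections of abelian groups is an injection, it suffices to prove the statement for $M$ finitely generated.

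Next, for $M$ finitely generated over the noetherian ring $A$, the standard fact (Atiyah--Macdonald, or \cite[Theorem~8.7 and its corollaries]{Matsu}) is that $\wh{A}$ is flat over $A$ and the natural map $M \otimes_A \wh{A} \to \wh{M}$ is an \emph{isomorphism}, not merely an injection. I would invoke this directly: choose a presentation $A^m \to A^n \to M \to 0$, tensor with the flat module $\wh{A}$, and compare with the $I$-adic completion of the same presentation, using that completion is exact on finitely generated modules over a noetherian ring (again Artin--Rees). The five lemma then gives the isomorphism $M \otimes_A \wh{A} \xrightarrow{\cong} \wh{M}$ for finitely generated $M$.

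Combining the two steps: for general $M$, write $M = \colim M_{\lambda}$ over finitely generated submodules; the map $M \otimes_A \wh{A} \to \wh{M}$ is the colimit of the isomorphisms $M_{\lambda} \otimes_A \wh{A} \xrightarrow{\cong} \wh{M_{\lambda}}$ composed with the maps $\wh{M_{\lambda}} \to \wh{M}$ induced by $M_{\lambda} \hookrightarrow M$. So the claim reduces to showing that $\colim_{\lambda} \wh{M_{\lambda}} \to \wh{M}$ is injective. This is where the only real content lies: completion is not exact on arbitrary modules, so $\wh{M_{\lambda}} \to \wh{M}$ need not be injective, but what one checks is that an element of $\colim \wh{M_{\lambda}}$ mapping to zero in $\wh{M}$ must already be zero — equivalently, $\bigcap_{\lambda} \ker(\wh{M_{\lambda}} \to \wh{M}) $, interpreted in the colimit, is trivial. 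Concretely, given $x \in M_{\lambda} \otimes_A \wh{A}$ whose image in $\wh{M}$ vanishes, flatness of $\wh{A}$ over $A$ lets us pass to a larger finitely generated $M_{\mu} \supseteq M_{\lambda}$ and use that $M_{\lambda} \otimes_A \wh{A} \to M_{\mu} \otimes_A \wh{A}$ is injective (flatness) together with exactness of completion on the finitely generated modules $M_{\lambda} \subseteq M_{\mu}$ to conclude $x = 0$ already in $M_{\lambda} \otimes_A \wh{A}$.

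The main obstacle is organizing this last colimit argument cleanly so that one never has to complete a non-finitely-generated module directly; the trick is to keep $\wh{A}$ (which is flat) on the tensor side and only ever complete finitely generated modules. An alternative, perhaps cleaner, route avoids colimits entirely: by flatness of $\wh{A}$, the functor $N \mapsto N \otimes_A \wh{A}$ is exact, so it commutes with arbitrary intersections of submodules; then one directly verifies $\bigcap_{n} I^n (M \otimes_A \wh{A}) $ injects appropriately, or simply notes that $M \otimes_A \wh{A} \to \wh{M}$ factors as $M \otimes_A \wh{A} \to \wh{M \otimes_A \wh{A}}_{\,I\wh{A}} \to \wh{M}$ and that the first map is injective because $\wh{A}$ is $I$-adically separated flat, while the composite identity $\wh{M}_I \cong \wh{(M\otimes_A \wh A)}_{I\wh A}$ pins it down. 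Either way, the essential inputs are precisely the noetherian hypothesis, the Artin--Rees lemma, and flatness of $\wh{A}$ over $A$, all of which are available from \cite{Matsu}.
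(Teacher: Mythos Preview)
Your overall architecture mirrors the paper's: write $M$ as the filtered union of its finitely generated submodules $M_\lambda$, use that $M_\lambda\otimes_A\wh{A}\to\wh{M_\lambda}$ is an isomorphism for each $\lambda$, and then try to show that $\varinjlim_\lambda \wh{M_\lambda}\to\wh{M}$ is injective. The paper dispatches this last step in one line by asserting that ``the completion functor is left exact,'' hence $\wh{M_\lambda}\hookrightarrow\wh{M}$; you are more honest and flag precisely this point as ``where the only real content lies.'' Unfortunately neither your sketch nor the paper's sentence actually closes the gap, and in fact the gap cannot be closed: the lemma is false as stated.

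Take $A=\Z$, $I=(p)$, and $M=\Z[1/p]$. Then $\wh{A}=\Z_p$ and $M\otimes_A\wh{A}=\Z[1/p]\otimes_\Z\Z_p=\Q_p$, which is nonzero. On the other hand $I^nM=M$ for every $n$, so $\wh{M}=\varprojlim M/I^nM=0$, and the ``natural map'' $\Q_p\to 0$ is certainly not injective. In this example the finitely generated submodules are $M_k=p^{-k}\Z$, each with completion $\wh{M_k}\cong\Z_p$, and the transition maps $\wh{M_k}\to\wh{M_{k+1}}$ are multiplication by $p$; thus $\varinjlim_k\wh{M_k}\cong\Q_p$ as well, and this colimit does \emph{not} inject into $\wh{M}=0$. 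So the step you single out---showing $\varinjlim_\lambda\wh{M_\lambda}\hookrightarrow\wh{M}$---genuinely fails, and the concrete mechanism you propose (``pass to a larger $M_\mu$ and use exactness of completion on finitely generated modules'') cannot work because vanishing in $\wh{M}$ gives no information about vanishing in any $\wh{M_\mu}$. The alternative route via $I$-adic separatedness of $M\otimes_A\wh{A}$ fails for the same reason: $\Q_p$ is not $p$-adically separated. The paper's one-line justification is equally in error; left exactness of completion holds on finitely generated modules over a noetherian ring (Artin--Rees), but $M$ here is not assumed finitely generated.
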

\begin{proof} Since $A$ is noetherian, the lemma is obvious if $M$ is
a finitely generated $A$-module ({\sl cf.} \cite[Theorem~8.7]{Matsu}). 
In general, we can find a direct system 
${\left\{M_{\gamma}\right\}}_{\gamma \in \Gamma}$ of finitely generated
submodules of $M$ such that $\stackrel{} 
{\underset {\gamma}{\varinjlim}} M_{\gamma} \xrightarrow{\cong} M$.
In particular, we get $\stackrel{}{\underset {\gamma}{\varinjlim}}   
M_{\gamma} {\otimes}_A {\wh{A}} \xrightarrow{\cong} M {\otimes}_A {\wh{A}}$.
Now we consider the following commutative diagram.
\[
\xymatrix{
{\stackrel{}{\underset {\gamma}{\varinjlim}}   
M_{\gamma} {\otimes}_A {\wh{A}}} \ar[r]^{\cong} \ar[d] &
M {\otimes}_A {\wh{A}} \ar[d] \\
{\stackrel{}{\underset {\gamma}{\varinjlim}} {\wh{M_{\gamma}}}} \ar[r] &
\wh{M}}
\]
The left vertical arrow is an isomorphism since each $M_{\gamma}$ is
finitely generated. Since each $M_{\gamma}$ is a submodule of $M$ and
since the completion functor is left exact,
we see that ${\wh{M_{\gamma}}} \inj \wh{M}$ for each $\gamma$ and hence
${\stackrel{}{\underset {\gamma}{\varinjlim}} {\wh{M_{\gamma}}}}
\inj \wh{M}$. A diagram chase now shows that the right vertical
arrow is injective.
\end{proof}
\section{Riemann-Roch for Higher Equivariant $K$-theory}
We begin this section with a brief recall of the equivariant higher
Chow groups defined in \cite{ED2}. Let $G$ a linear algebraic group over
$k$ acting on a quasi-projective variety $X$.
For any integer $j \ge 0$, let
$V$ be a representation of $G$ and let $U$ be a $G$-invariant open subset
of $V$ such that the codimension of the complement $V-U$ in $V$ is
larger than $j$, and $G$ acts freely on $U$. It is easy to see that
such representations of $G$ always exist. 
Let $X_G$ denote the quotient $X \stackrel{G} {\times} U$ of the product 
$X \times U$ by the diagonal action of $G$, which is free.
The equivariant higher Chow group $CH^j_G(X, i)$ is defined as the homology 
group $H_i\left(\sZ(X_G, \cdot)\right)$, where $\sZ(X_G, \cdot)$ is the 
Bloch's cycle complex of the variety $X_G$ \cite{Bloch}.
It is known ({\sl loc. cit.}) that this definition of $CH^j_G(X, i)$
is independent of the choice of the above representation. 
One should also observe that $CH^j_G(X, i)$ may be nonzero for infinitely 
many values of $j$, a crucial change from the non-equivariant higher Chow 
groups. We refer the reader to {\sl loc. cit.} and \cite{KV} for
further details.

Now we come back to the case of a connected reductive group $G$ over $k$. 
We fix a maximal torus $T = D_k(N)$ of $G$, where $N$ is the character group 
of $T$.
Let $\fm$ be a given maximal ideal of $R(G)$ and let $\wt{\fm}$ be a maximal
ideal of $R(T) = {\Q}[N]$ lying over $\fm$. Assume that the support 
$T_{\wt{\fm}}$ of $\wt{\fm}$ is connected. Until we begin the proof of 
Theorem~\ref{thm:TRR}, we also assume in this section that $S = 
{T_{\wt{\fm}}}_k$ is contained in the center of $G$.
Let $L = {R(T)}/{\wt{\fm}}$ be the residue field of $\wt{\fm}$.
Let $x$ and $y$ be the closed points of $X = {\rm Spec}\left(R(T)\right)$ 
and $Y = {\rm Spec}\left(R(G)\right)$ defined by $\wt{\fm}$ and $\fm$
respectively so that $f(x) = y$ under the finite map of finite type 
$\Q$-schemes $f : X \to Y$. Note that $X = D_{\Q}(N)$ is a split torus over 
$\Q$ such that $X_k \xrightarrow{\cong} D_k(N) = T$. 
Hence the natural map $X^*(X) \to X^*(T)$ is an isomorphism
of $\Q$-algebras. In the same way, we see that if $M$ is the character group
of $S$, then the natural map $X^*\left(T_{\wt{\fm}}\right) \to X^*(S)
={\Q}[M]$ is an isomorphism of $\Q$-algebras. 
We conclude that every character $\chi$ of $S$ is given by the morphism of
group schemes ${\chi} : T_{\wt{\fm}} \to {\G}_m$ over $\Q$. In particular, 
we get ${\chi}(x) \in L^*$.   

If $V$ is a representation of $G$, then $S$ acts on $V$ via characters
and this decomposes $V$ uniquely as
\begin{equation}\label{eqn:Dec}
V = \stackrel{} {\underset {\chi \in M}{\bigoplus}} V_{\chi},
\end{equation}
where $S$ acts on $V_{\chi}$ by $hv = {\chi}(h)v$. Since $S \subset Z(G)$,
we see that the above is a decomposition of $V$ as direct sum of
$G$-representations parameterized by $M$. 
This defines an $L$-algebra automorphism of $R_L(G)$ 
\begin{equation}\label{eqn:auto} 
t_{\wt{\fm}} = t_x : R_L(G) \to R_L(G)
\end{equation}
\[
t_{\wt{\fm}}([V]) = \stackrel{} {\underset {\chi \in M}{\Sigma}}
{\chi}(x)[V_{\chi}].
\]
The following lemma follows easily from above.
\begin{lem}\label{lem:twist}
The automorphism $t_{\wt{\fm}}$ takes the point $x$ of $Y_L$ to the closed 
point defined by the augmentation ideal $I_G$ of $R_L(G)$.
\end{lem}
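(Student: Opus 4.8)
The plan is to unwind the statement into an identity of two $L$-algebra homomorphisms $R_L(G)\to L$ and to check it on classes of representations.

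First I would describe the point $x$ of $Y_L$ explicitly. Since $\wt{\fm}\cap R(G)=\fm$, the point $x$ of $Y_L=\Spec R_L(G)$ (the image of $x$ under $f\colon X\to Y$, base-changed to $L$) is cut out by the kernel of the $L$-algebra homomorphism $\psi_x\colon R_L(G)\to L$ sending a representation class $[V]$ to the value at $x$ of the character of $V$. Writing $\phi\colon N\to L^{*}$ for the map $n\mapsto [n]\bmod\wt{\fm}$ and decomposing a $G$-representation $V$ into its $T$-weight spaces $V=\bigoplus_{\mu\in N}V_{\mu}$, this reads
\[
\psi_x([V])=\sum_{\mu\in N}(\dim V_{\mu})\,\phi(\mu).
\]
By the definition of the support we have $N_{\wt{\fm}}=\ker\phi$, so $\phi$ factors through $N\to N/N_{\wt{\fm}}$; identifying $N/N_{\wt{\fm}}=X^{*}(T_{\wt{\fm}})$ with the character group $M$ of $S$, the induced map $\bar{\phi}\colon M\to L^{*}$ is exactly $\chi\mapsto\chi(x)$. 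Likewise, the closed point defined by $I_G$ is cut out by the kernel of the rank (augmentation) map $\varepsilon\colon R_L(G)\to L$, $[V]\mapsto\dim V$.

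The assertion of the lemma is then equivalent to the equality $\varepsilon\circ t_{\wt{\fm}}=\psi_x$ of $L$-algebra homomorphisms $R_L(G)\to L$; indeed, this says $\ker\psi_x=t_{\wt{\fm}}^{-1}(\ker\varepsilon)$, i.e. $t_{\wt{\fm}}$ carries the maximal ideal of $x$ onto $I_G\cdot R_L(G)$. As both sides are ring homomorphisms and $R(G)$ is spanned by classes of representations, it is enough to compare them on a single $[V]$. From the definition of $t_{\wt{\fm}}$,
\[
\varepsilon\bigl(t_{\wt{\fm}}([V])\bigr)=\varepsilon\Bigl(\sum_{\chi\in M}\chi(x)[V_{\chi}]\Bigr)=\sum_{\chi\in M}\chi(x)\dim V_{\chi},
\]
where $V=\bigoplus_{\chi\in M}V_{\chi}$ is the decomposition into $S$-isotypic summands, each of which is a $G$-subrepresentation because $S\subset Z(G)$. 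On the other hand, grouping the $T$-weights of $V$ by their images under $N\to M$: if $\mu\mapsto\chi$ then $V_{\mu}\subseteq V_{\chi}$, hence $V_{\chi}=\bigoplus_{\mu\mapsto\chi}V_{\mu}$, and moreover $\phi(\mu)=\bar{\phi}(\chi)=\chi(x)$. Therefore
\[
\psi_x([V])=\sum_{\mu\in N}(\dim V_{\mu})\,\phi(\mu)=\sum_{\chi\in M}\chi(x)\sum_{\mu\mapsto\chi}\dim V_{\mu}=\sum_{\chi\in M}\chi(x)\dim V_{\chi}=\varepsilon\bigl(t_{\wt{\fm}}([V])\bigr),
\]
which is the required identity.

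The computation itself is immediate; the only step demanding care is the compatibility of the various character-group identifications under the base changes $\Q\hookrightarrow L$ and $\Q\hookrightarrow k$ — in particular that $x$ lies on the support $T_{\wt{\fm}}$, that $\phi$ descends to $M=N/N_{\wt{\fm}}$, and that this descent coincides with evaluation of characters of $S$ at $x$. The same argument works for any prime $\wt{\fm}$ with connected support, so assuming $T_{\wt{\fm}}$ connected is convenient but inessential here.
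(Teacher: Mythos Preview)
Your argument is correct. Both you and the paper verify the same fact, but the organization differs. The paper first sets up a commutative square showing that $t_{\wt{\fm}}$ on $Y_L$ is compatible, via $f$ and the inclusion $T_{\wt{\fm}}\hookrightarrow X$, with the analogous automorphism $t'_{\wt{\fm}}$ on $(T_{\wt{\fm}})_L$; it then checks directly from \eqref{eqn:auto} that $t'_{\wt{\fm}}(x_L)=(1,\dots,1)$, the identity of the support torus, and concludes because the augmentation ideal of $R(T_{\wt{\fm}})$ contracts to $I_G$. You instead bypass the diagram and work entirely on $R_L(G)$, reformulating the claim as the equality $\varepsilon\circ t_{\wt{\fm}}=\psi_x$ of $L$-algebra maps and checking it on $[V]$ by passing through the $T$-weight decomposition and regrouping by $S$-characters. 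Your route is a bit more self-contained (no compatibility diagram to set up), while the paper's reduction makes the geometric content clearer: $t_{\wt{\fm}}$ is, on the support torus, translation by $x_L^{-1}$, hence carries $x_L$ to the identity.
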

\begin{proof}  We see from the above discussion that $t_{\wt{\fm}}$ acts
compatibly on ${T_{\wt{\fm}}}_L$ and $X_L$ such that the diagram
\[
\xymatrix{
{T_{\wt{\fm}}}_L \ar@{^{(}->}[r] \ar[d]_{t'_{\wt{\fm}}} &
X_L \ar[r]^{f} & Y_L \ar[d]^{t_{\wt{\fm}}} \\
{T_{\wt{\fm}}}_L \ar@{^{(}->}[r] &
X_L \ar[r]_{f} & Y_L}
\]
commutes. Since the augmentation ideal of $R(T_{\wt{\fm}})$
contracts to the augmentation ideal of $R(G)$, 
the lemma now follows immediately by noting that $x_L \in 
{T_{\wt{\fm}}}_L$ and by a direct checking in ~\ref{eqn:auto} that
$t'_{\wt{\fm}}(x_L) = (1, \cdots, 1) \in {T_{\wt{\fm}}}(L)$.
\end{proof}  

For a quasi-projective variety $X$ with a $G$-action, let $G^G(X)$
denote the $K$-theory spectrum of the abelian category of $G$-equivariant 
coherent sheaves on $X$. Assume that $S \subset G$ acts trivially on $X$.
We wish to decompose this spectrum in terms
of the characters of $S$. Let $\sC$ denote the abelian category of
$G$-equivariant coherent sheaves on $X$. Since $S$ acts trivially on $X$,
it acts on each coherent sheaf fiberwise. For any character $\chi$ of
$S$, let ${\sC}_{\chi}$ denote the subcategory of $\sC$ given by those
sheaves on which $S$ acts by the character $\chi$. That is, 
\[
{\rm ob} {\sC}_{\chi} = \left\{\sF \in \sC| sf = {\chi}(s)f \ \forall \ 
U \stackrel{} {\underset {\rm open}{\subset}} X, \forall f \in {\sF}(U)
\ {\rm and} \ \forall s \in S \right\}.
\]   
Note that since every open subset of $X$ is $S$-invariant, the above 
definition makes sense. 
\begin{prop}\label{prop:Dec1}
Each ${\sC}_{\chi}$ is an abelian full subcategory of $\sC$ and the inclusion
${\sC}_{\chi} \inj \sC$ induces a natural isomorphism of $K$-theory spaces
\begin{equation}\label{eqn:Dec2}
\stackrel{} {\underset {\chi \in M}{\coprod}} K\left({\sC}_{\chi}\right)
\xrightarrow{\cong} K^G(X).
\end{equation}
\end{prop}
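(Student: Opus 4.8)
The proof rests entirely on the $S$-weight decomposition of $G$-equivariant coherent sheaves; the $K$-theoretic assertion is then formal. The plan is to first record this decomposition. Since $S$ is diagonalizable with character group $M$ and acts trivially on $X$, restricting a $G$-equivariant structure on $\sF\in\sC$ along $S\hookrightarrow G$ is the same datum as an $M$-grading of $\sF$ as a quasi-coherent $\sO_X$-module, i.e. a canonical decomposition $\sF=\bigoplus_{\chi\in M}\sF_\chi$ into quasi-coherent subsheaves: over an affine $U=\Spec A\subseteq X$ this is the statement that an $A$-module object of $\mathrm{Rep}(S)$ is the same as an $M$-graded $A$-module, with $A$ in degree $0$, and it is Zariski-local, hence glues. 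When $\sF$ is coherent, each $\sF_\chi$, being an $\sO_X$-submodule of a coherent sheaf on a Noetherian scheme, is again coherent; and since $\sF$ is locally finitely generated and $X$ is quasi-compact, only finitely many $\sF_\chi$ are nonzero. Finally, because $S$ is central in $G$, for $g\in G$ and a local section $f$ with $s\cdot f=\chi(s)f$ one has $s\cdot(g\cdot f)=(sg)\cdot f=(gs)\cdot f=g\cdot(s\cdot f)=\chi(s)\,(g\cdot f)$, so the $G$-action preserves the $\chi$-weight subsheaf; hence each $\sF_\chi$ inherits a $G$-equivariant structure, i.e. $\sF_\chi\in\sC_\chi$, and $\sF\mapsto(\sF_\chi)_\chi$ is functorial and exact. (This is exactly where the centrality of $S$ enters.)

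Granting this, the first assertion is immediate: $\sC_\chi$ is full in $\sC$ by definition, and since the weight decomposition is exact it is closed under subobjects, quotients and extensions in $\sC$, hence a Serre subcategory, in particular an abelian category on which the inclusion $\sC_\chi\hookrightarrow\sC$ is exact. Moreover, the same discussion identifies $\sC$ with the direct sum of abelian categories $\bigoplus_{\chi\in M}\sC_\chi$ in the literal sense: every object is canonically a finite direct sum of objects of the $\sC_\chi$, and $\Hom_{\sC}(\sF,\sG)=0$ whenever $\sF\in\sC_\chi$ and $\sG\in\sC_{\chi'}$ with $\chi\neq\chi'$, a $G$-equivariant map being $S$-equivariant and hence weight-preserving.

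For the $K$-theory statement, the map in ~\ref{eqn:Dec2} is the one assembled from the exact inclusions $\sC_\chi\hookrightarrow\sC$ via the universal property of the coproduct in (connective) spectra, i.e. of the wedge $\bigvee_\chi K(\sC_\chi)$. I would prove it is an equivalence by reduction to the finite case. For a finite subset $J\subseteq M$ put $\sC^J=\bigoplus_{\chi\in J}\sC_\chi\subseteq\sC$, a Serre subcategory equivalent to the product category $\prod_{\chi\in J}\sC_\chi$; since $K$-theory sends a finite product of abelian categories to the product of their $K$-theory spectra, $K(\sC^J)\simeq\prod_{\chi\in J}K(\sC_\chi)=\coprod_{\chi\in J}K(\sC_\chi)$. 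The $\sC^J$ form a filtered system of abelian subcategories with exact inclusions, and by the local finiteness of the grading every coherent $G$-equivariant sheaf, and every morphism between such, lies in some $\sC^J$; thus $\sC=\colim_J\sC^J$. As Quillen $K$-theory commutes with filtered colimits of (exact) categories, $K^G(X)=K(\sC)\simeq\colim_J K(\sC^J)\simeq\colim_J\coprod_{\chi\in J}K(\sC_\chi)=\coprod_{\chi\in M}K(\sC_\chi)$, and unwinding the identifications shows this composite is the map of ~\ref{eqn:Dec2}.

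The only points that genuinely need care all live in the first paragraph: that the $\sO_X$-graded pieces of a coherent equivariant sheaf remain coherent, that only finitely many are nonzero, and that they are $G$-stable (which is what forces the use of $S\subseteq Z(G)$); once these are settled the passage to $K$-theory is routine bookkeeping with additivity and filtered colimits. I would also flag explicitly that $\coprod$ in ~\ref{eqn:Dec2} must be read in spectra (equivalently, as the wedge of infinite loop spaces), not as a disjoint union of spaces, since the target $K^G(X)$ is connected.
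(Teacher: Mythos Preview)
Your proof is correct and follows essentially the same approach as the paper: both establish the $S$-weight decomposition $\sF=\bigoplus_\chi \sF_\chi$ using that $S$ is diagonalizable, acts trivially on $X$, and is central in $G$, and then deduce the $K$-theory statement formally. Your passage to $K$-theory via the filtered system of finite subsets $J\subseteq M$ and Quillen's commutation with filtered colimits is a cleaner implementation of the same step the paper handles by viewing $M$ as an indexing category and citing \cite[Proposition~3]{Quillen}.
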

\begin{proof} It is easy to see that ${\sC}_{\chi}$ is a full 
additive subcategory. To see that it has kernels and cokernels, one needs
to show that the kernel and cokernel of a map $\sF \to \sG$ in ${\sC}_{\chi}$
also lie in ${\sC}_{\chi}$, one can check it at each affine open subset of
$X$, where it can be checked directly. 
To prove ~\ref{eqn:Dec2}, note that $M = X^*(S)$ is a filtered category in a 
natural way by putting ${\rm Hom}(\chi, {\chi}')$ to be the empty set
unless $\chi = {\chi}'$ in which case, we take this to be the singleton
set consisting of the identity map. Then $M$ becomes a discrete filtered
category and there is a functor $F : M \to \underline{Cat}$ given by
$F(\chi) = {\sC}_{\chi}$. Now, since $S$ acts
trivially on $X$, every $\sF \in \sC$ has a unique decomposition
\[
\sF =  \stackrel{} {\underset {\chi \in M}{\coprod}} {\sF}_{\chi},
\ {\rm where}
\]
\[
{\sF}_{\chi}(U) = \left\{ f \in {\sF}(U) | sf = {\chi}(s)f \ \forall \
s \in S\right\}.  
\]
Furthermore, the condition $S \subset Z(G)$ shows that this is in fact a 
canonical decomposition of $\sF$ as a direct sum of $G$-equivariant coherent 
subsheaves parameterized by $M$.
This immediately shows that  
\[
\stackrel{} {\underset {\chi \in M}{\varinjlim}} F({\chi}) \cong
\stackrel{} {\underset {\chi \in M}{\coprod}} {\sC}_{\chi} 
\xrightarrow{\cong} \sC
\]
is an equivalence.
In particular, we get $\stackrel{} {\underset {\chi \in M}{\coprod}} 
BQ{\sC}_{\chi} \xrightarrow{\cong} BQ{\sC}$. The proposition now 
follows from \cite[Proposition~3]{Quillen}.
\end{proof}

We shall denote the space $K({\sC}_{\chi})$ also by $K^{\chi}(X)$.
We consider $L = {\Q}(x)$ as a discrete space and write $G^G(X)_L
= G^G(X) \Wedge L$ and $K^{\chi}(X)_L = K^{\chi}(X) \Wedge L$
as the smash products. Then we have ${\pi}_i\left(G^G(X)_L\right) =
G^G_i(X) {\otimes}_{\Z} L$ and ${\pi}_i\left(G^{\chi}(X)_L\right)
= K^{\chi}_i(X) {\otimes}_{\Z} L$.
This allows us to define an $L$-linear automorphism of 
$K^{\chi}_i(X)_L$ 
\begin{equation}\label{eqn:Dec2}
t^{\chi}_{\wt{\fm}} : K^{\chi}_i(X)_L \to K^{\chi}_i(X)_L
\end{equation}
\[
t^{\chi}_{\wt{\fm}}(\alpha) = {\chi}(x){\alpha}.
\]
This makes sense since ${\chi}(x) \in L^*$. By 
Proposition~\ref{prop:Dec1}, this induces  
an $L$-linear automorphism of $K^G_i(X)_L$ 
\begin{equation}\label{eqn:Dec3}
t_{\wt{\fm}} = \stackrel{} {\underset {\chi \in M}{\coprod}} 
t^{\chi}_{\wt{\fm}} : G^G_i(X)_L \to G^G_i(X)_L.
\end{equation}
We shall sometimes write $t_{\wt{\fm}}$ also as $t^G_{\wt{\fm}}$ if we need to 
indicate the underlying group.
It is easy to see from ~\ref{eqn:Dec2} and ~\ref{eqn:Dec3} that the
automorphism $t_{\wt{\fm}}$ coincides with the one defined
in ~\ref{eqn:auto} when $X = {\rm Spec}(k)$.
\begin{prop}\label{prop:match}
For every $\alpha \in R(G)_L$ and $\beta \in G^G_i(X)_L$, one has
\[
t_{\wt{\fm}}\left({\alpha}\cdot {\beta}\right)
= t_{\wt{\fm}}\left({\alpha}\right) \cdot  t_{\wt{\fm}}\left({\beta}\right)
\]
for the $R(G)_L$-module structure on $G^G_i(X)_L$ given by
$R(G)_L {\otimes}_L G^G_i(X)_L \to G^G_i(X)_L$.
\end{prop}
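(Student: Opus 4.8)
The plan is to reduce the asserted identity to the case of homogeneous elements for the grading by the character group $M$ of $S$ that underlies Proposition~\ref{prop:Dec1}, and then to observe that on homogeneous pieces both sides are literally the same scalar multiple of $\alpha\cdot\beta$.

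First I would recall that the $R(G)_L$-module structure on $G^G_i(X)_L$ is induced, at the level of exact categories, by the exact bifunctor $(V,\sF)\mapsto p^*V\otimes_{\sO_X}\sF$, where $p\colon X\to\Spec k$ is the structure map, $V$ a finite-dimensional $G$-representation and $\sF$ a $G$-equivariant coherent sheaf on $X$. Applying Proposition~\ref{prop:Dec1} both to $X$ and to $\Spec k$ (on which $S$, being trivial, acts trivially) yields internal direct sum decompositions $G^G_i(X)_L=\bigoplus_{\chi\in M}K^{\chi}_i(X)_L$ and $R(G)_L=\bigoplus_{\chi\in M}R(G)_L^{\chi}$, where $R(G)_L^{\chi}$ is spanned by the classes of representations on which $S$ acts through $\chi$. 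Because $S$ is central in $G$, every irreducible $G$-representation is $S$-isotypic, so this is a genuine $M$-grading and every element of $R(G)_L$ (resp. $G^G_i(X)_L$) is a finite sum of homogeneous pieces.

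The only point that requires an argument is that the bifunctor above is compatible with these gradings: if $S$ acts on $V$ through $\chi$ and on $\sF$ through $\chi'$, then it acts on $p^*V\otimes_{\sO_X}\sF$ through $\chi+\chi'$. This is immediate, since $S$ acts trivially on $X$, so its action on $p^*V\otimes_{\sO_X}\sF$ is the fibrewise tensor product of its actions on the two factors; equivalently, the tensor bifunctor carries $\sC^{\Spec k}_{\chi}\times\sC^{X}_{\chi'}$ into $\sC^{X}_{\chi+\chi'}$. Passing to $K$-theory via Proposition~\ref{prop:Dec1} gives $R(G)_L^{\chi}\cdot K^{\chi'}_i(X)_L\subseteq K^{\chi+\chi'}_i(X)_L$.

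Finally, both sides of the claimed equation are $L$-bilinear in $(\alpha,\beta)$, so it suffices to check it for homogeneous $\alpha\in R(G)_L^{\chi}$ and $\beta\in K^{\chi'}_i(X)_L$. For such elements $\alpha\cdot\beta\in K^{\chi+\chi'}_i(X)_L$, so by the definition of $t_{\wt{\fm}}$ in~\ref{eqn:Dec3} (which agrees with~\ref{eqn:auto} when $X=\Spec k$) we have $t_{\wt{\fm}}(\alpha\cdot\beta)=(\chi+\chi')(x)\,(\alpha\cdot\beta)=\chi(x)\chi'(x)\,(\alpha\cdot\beta)$, while $t_{\wt{\fm}}(\alpha)\cdot t_{\wt{\fm}}(\beta)=\bigl(\chi(x)\,\alpha\bigr)\cdot\bigl(\chi'(x)\,\beta\bigr)=\chi(x)\chi'(x)\,(\alpha\cdot\beta)$, using $L$-bilinearity of the product and the fact that $\chi(x),\chi'(x)\in L^{*}$ (meaningful because every character of $S$ is defined over $\Q$, as observed just before the proposition). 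The two expressions coincide, which proves the identity. The main obstacle is a mild bookkeeping one: making the grading-compatibility of the module pairing precise at the level of $K$-theory spectra rather than only on $\pi_0$; I would dispose of it by reading the module structure directly off the categorical decomposition of Proposition~\ref{prop:Dec1} as indicated above.
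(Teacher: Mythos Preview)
Your argument is correct, but it differs from the paper's route. The paper first proves the identity when $G=T$ is a torus by writing $T=S\times T'$ with $T'=T/S$, invoking Lemma~\ref{lem:trivial*} to get $G^T_i(X)_L\cong G^{T'}_i(X)_L\otimes_L R(S)_L$, and checking directly that $t^T_{\wt{\fm}}$ acts only on the $R(S)_L$ factor, where it is already known to be a ring automorphism. The general case is then deduced from the torus case via the split injective restriction $p^!\colon G^G_i(X)_L\hookrightarrow G^T_i(X)_L$ and the compatibility of $t_{\wt{\fm}}$ with $p^!$.

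Your approach works instead entirely inside the $M$-grading supplied by Proposition~\ref{prop:Dec1} (applied to both $X$ and $\Spec k$), observing that the tensor bifunctor is graded and hence so is the induced $R(G)_L$-module pairing, after which the identity is a one-line check on homogeneous elements. This is more direct: it avoids the reduction to the maximal torus, the splitting $T=S\times T'$, and Lemma~\ref{lem:trivial*} (with its Mayer--Vietoris input). The paper's detour through $T$ buys a slightly more hands-on computation and reuses the familiar splitting $p^!,p_!$, but your argument extracts the same content straight from the categorical decomposition that the paper had already set up.
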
 
\begin{proof} We first assume that $G = T$ is a torus. Then there is a
decomposition $T = S \times T'$ where $T' = T/S$. Since $S$ acts trivially
on $X$, we have the isomorphism $G^{T'}_i(X)_L {\otimes}_L R(S)_L 
\xrightarrow{\cong} G^T_i(X)_L$ by Lemma~\ref{lem:trivial*} below.
Moreover, since $x_k \in S$ and since
the characters of $S$ restrict to the trivial character on $T'$,
we see that $t^T_{\wt{\fm}}$ acts on $G^T_i(X)_L$ by
$t^T_{\wt{\fm}}\left({\beta}' \otimes {\alpha}' \right)
= {\beta}' \otimes t^S_{\wt{\fm}}({\alpha}')$.    
Writing $R(T) = R(T') \otimes R(S)$, ${\alpha} = {\alpha}_S \otimes
{\alpha}_{T'}$ and $\beta = {\beta}_S \otimes {\beta}_{T'}$, we get
\[
\begin{array}{lllll}
t^T_{\wt{\fm}}\left({\alpha} \cdot {\beta}\right) & = &
t^T_{\wt{\fm}}\left(\left({\alpha}_{T'} \cdot {\beta}_{T'}\right)
\otimes \left({\alpha}_{S} \cdot {\beta}_{S}\right)\right) & = &
{\alpha}_{T'} \cdot {\beta}_{T'} \otimes
t^S_{\wt{\fm}}\left({\alpha}_S \cdot {\beta}_S \right) \\
& = & {\alpha}_{T'} \cdot {\beta}_{T'} \otimes 
t^S_{\wt{\fm}}\left({\alpha}_S \right) \cdot 
t^S_{\wt{\fm}}\left({\beta}_S \right) & = &
\left({\alpha}_{T'} \otimes t^S_{\wt{\fm}}\left({\alpha}_S\right)\right)
\cdot \left({\beta}_{T'} \otimes t^S_{\wt{\fm}}\left({\beta}_S\right)\right) 
\\
& = & t^T_{\wt{\fm}}\left(\alpha\right) \cdot 
t^T_{\wt{\fm}}\left(\beta\right),
\end{array}
\]
where the first equality in the second row holds because $t_{\wt{\fm}}$
is a ring automorphism of $R(S)_L$ ({\sl cf.} Lemma~\ref{lem:twist}).
This proves the proposition in the case of a torus.

To prove the general case, let $p^! : G^G_i(X)_L \to G^T_i(X)_L$ be the
restriction map. We have seen before that $p^!$ is a split injective
$R(G)_L$-linear map ({\sl cf.} \cite[Section~1.6]{Thom2}). Moreover,  
as $x_k \in S$, it is clear from definition that $t_{\wt{\fm}}$ acts 
compatibly on $G^G(X)$ and $G^T(X)$. Now we have
\[
\begin{array}{lllll}
p^! \left(t^G_{\wt{\fm}}\left({\alpha}\cdot {\beta}\right)\right) & = &
t^T_{\wt{\fm}}\left(p^! \left({\alpha}\cdot {\beta}\right)\right) & = &
t^T_{\wt{\fm}}\left(p^! \left({\alpha}\right) \cdot
p^! \left({\beta}\right)\right) \\
& = & t^T_{\wt{\fm}}\left(p^! \left({\alpha}\right)\right) \cdot
t^T_{\wt{\fm}}\left(p^! \left({\beta}\right)\right) & = &
p^! \circ t^T_{\wt{\fm}}\left(\alpha\right) \cdot 
p^! \circ t^T_{\wt{\fm}}\left(\beta\right) \\
& = & p^! \left(t^T_{\wt{\fm}}\left(\alpha\right) \cdot 
t^T_{\wt{\fm}}\left(\beta\right)\right), &  & 
\end{array}
\]
where the first equality in the second row follows from the torus case
proved above. Since $p^!$ is injective, we get the desired result in the
general case.
\end{proof}
The following is a generalization of \cite[Lemma~5.6]{Thom3} where it
was proved when $X$ is affine.
\begin{lem}\label{lem:trivial*}
Let $T$ be a torus acting on a normal quasi-projective variety $X$ and let
$S \subset T$ be a subtorus acting trivially on $X$ so that $T$ acts on $X$
via $T' = T/S$. Then there is an isomorphism
\[
G^{T'}_i(X) {\otimes}_{\Q} R(S) \xrightarrow{\cong} G^T_i(X).
\]
\end{lem}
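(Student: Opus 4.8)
The plan is to reduce to the affine case, which is \cite[Lemma~5.6]{Thom3}, by a Mayer--Vietoris / descent argument using an affine cover of $X$. First I would invoke Sumihiro's theorem \cite[Corollary~3.11]{Sumihiro}: since $T$ acts on the normal quasi-projective variety $X$, which carries an ample line bundle, $X$ admits a finite cover by $T$-invariant affine open subsets $U_1, \dots, U_n$. Each $U_j$ is a normal (open in $X$, hence normal) affine $T$-variety on which $S$ acts trivially, so the affine case gives an isomorphism $G^{T'}_i(U_j) {\otimes}_{\Q} R(S) \xrightarrow{\cong} G^T_i(U_j)$, and the same applies to all finite intersections $U_{j_1} \cap \cdots \cap U_{j_m}$, which are again $T$-invariant, affine (as $X$ is separated) and normal.

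Next I would set up the comparison of the two spectra globally. Equivariant $G$-theory satisfies Zariski (in fact Nisnevich) descent, so for the open cover $\{U_j\}$ there is a spectral sequence, or more simply a Mayer--Vietoris spectral sequence, computing $G^{T}_*(X)$ from the $G^{T}_*$ of the intersections $U_{j_1} \cap \cdots \cap U_{j_m}$, and likewise for $G^{T'}_*$. The map $G^{T'}(X) \wedge BS_+ \to G^{T}(X)$ — more precisely the natural map induced by the forgetful functor along $T \to T'$ together with the $R(S) = R(T)$-action, realized on the level of spectra via $G^{T'}(-) \wedge R(S)$ — is compatible with restriction to opens, hence induces a morphism of the two descent spectral sequences. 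On the $E_1$ (or $E_2$) page it is the map $G^{T'}_i(U_{j_1} \cap \cdots \cap U_{j_m}) {\otimes}_{\Q} R(S) \to G^{T}_i(U_{j_1} \cap \cdots \cap U_{j_m})$, which is an isomorphism by the affine case applied to each intersection. (Here I would use that $- {\otimes}_{\Q} R(S)$ is exact, since $R(S)$ is flat — indeed free — over $\Q$, so tensoring with $R(S)$ commutes with the formation of the spectral sequence and preserves the $E_1$-page computation.) By the comparison theorem for spectral sequences, the map is an isomorphism on abutments, giving $G^{T'}_i(X) {\otimes}_{\Q} R(S) \xrightarrow{\cong} G^{T}_i(X)$.

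The main obstacle I expect is making the map on spectra (as opposed to on homotopy groups) genuinely natural in $X$ and compatible with the affine-case isomorphism of \cite[Lemma~5.6]{Thom3}, so that the descent comparison applies cleanly. Concretely: one must produce a map of presheaves of spectra $U \mapsto G^{T'}(U) \wedge R(S) \to G^{T}(U)$ on the small Zariski site of $X$ restricting to Thomason's isomorphism on affines. The cleanest route is to observe that the decomposition of Proposition~\ref{prop:Dec1}, applied to the $T$-action on $X$ where $S$ acts trivially, already gives $G^T(U) \simeq \coprod_{\chi \in X^*(S)} K(\sC_\chi(U))$ functorially in $U$; one then identifies each $K(\sC_\chi(U))$ with $G^{T'}(U)$ via the character twist (an equivalence of abelian categories $\sC_\chi \simeq \sC_0 = $ coherent $T'$-sheaves, natural in $U$), so that $G^T(U) \simeq G^{T'}(U) \wedge \big(\bigvee_{\chi} \1\big)$ and $\bigvee_\chi \1$ computes $R(S)$ after applying $\pi_*$ and rationalizing. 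This reduces the lemma to the rational identification $\pi_i\big(G^{T'}(X) \wedge (\bigvee_{\chi \in X^*(S)} \1)\big) \otimes \Q \cong G^{T'}_i(X) {\otimes}_{\Q} R(S)$, which follows from $R(S) = \Q[X^*(S)]$ and the fact that smashing with a wedge of spheres gives a direct sum on homotopy groups — and this last step no longer needs $X$ affine, so in fact the affine hypothesis of \cite{Thom3} and the descent argument are only needed to handle the subtlety that $\sC_\chi(U)$ need not be the category of $T'$-sheaves unless one knows $\chi$ extends, which on affine normal $U$ it does. If the character-twist functor is available globally for normal $X$ — which it is, since every line bundle issue is resolved by normality and every open is $S$-invariant — then the descent step can even be bypassed and one argues directly; I would present the descent version as the safe route and remark on the direct argument.
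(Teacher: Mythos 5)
Your proof is correct and follows essentially the same strategy as the paper's: reduce to the affine case via Thomason, cover $X$ by finitely many $T$-invariant affine open subsets using Sumihiro's theorem, and patch by Mayer--Vietoris (the paper phrases this as an induction on the size of the cover with the Mayer--Vietoris long exact sequences coming from localization and excision; your descent spectral sequence is the same device in a different package). Your explicit observation that $- \otimes_{\Q} R(S)$ is exact, so that the tensored-up comparison map remains compatible with the Mayer--Vietoris formalism, is a genuine prerequisite the paper leaves implicit and is worth recording; the alternative ``direct'' argument you sketch via the character decomposition of Proposition~\ref{prop:Dec1} is a reasonable shortcut but the paper does not pursue it.
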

\begin{proof} When $X$ is affine, this was proved in \cite[Lemma~3.1]{Thom3}.
In general, as $X$ is normal, it can be covered by finitely many 
$T$-invariant affine open subsets ({\sl cf.} \cite[Corollary~3.11]{Sumihiro}).
Now the lemma is easily proved by an induction on the number of $T$-invariant
open subsets covering $X$ and the Mayer-Vietoris long exact sequence   
for the $K$-theory of $T$-equivariant coherent sheaves. As is well known,
this Mayer-Vietoris property is a direct consequence of the localization
and the excision properties, both of which hold for the $K$-theory of
equivariant coherent sheaves ({\sl cf.} 
\cite[Theorem~2.7, Corollary~2.4]{Thom4}).   
\end{proof}
\begin{cor}\label{cor:TRR**}
The automorphism $t_{\wt{\fm}}$ of $G^G_i(X)_L$ takes the 
$R(G)_L$-submodule ${{\fm}^j} G^G_i(X)_L$ to
${I_G^j} G^G_i(X)_L$ for all $j \ge 0$. In particular, it induces
an isomorphism
\[
\wh{t}_{\wt{\fm}} : {\wh{G^G_i(X)}}_{{\fm}, L} \xrightarrow{\cong}
{\wh{G^G_i(X)}}_{{I_G}, L}.
\]
\end{cor}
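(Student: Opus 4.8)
The statement to prove is that the automorphism $t_{\wt{\fm}}$ of $G^G_i(X)_L$ carries the submodule ${\fm}^j G^G_i(X)_L$ onto $I_G^j G^G_i(X)_L$ for every $j \ge 0$, and hence induces an isomorphism on $\fm$-adic (resp. $I_G$-adic) completions. The natural approach is to reduce everything to the case $j = 1$ and then invoke Proposition~\ref{prop:match} together with Lemma~\ref{lem:twist}. First I would record the base case: Proposition~\ref{prop:match} says that for $\alpha \in R(G)_L$ and $\beta \in G^G_i(X)_L$ one has $t_{\wt{\fm}}(\alpha \cdot \beta) = t_{\wt{\fm}}(\alpha) \cdot t_{\wt{\fm}}(\beta)$, where on the left factor $t_{\wt{\fm}}$ is the ring automorphism of $R(G)_L$ from ~\ref{eqn:auto}. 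Lemma~\ref{lem:twist} says precisely that this ring automorphism sends the maximal ideal $\wt{\fm}\cap R_L(G)$ (equivalently, the ideal $\fm$ extended to $R(G)_L$) to the augmentation ideal $I_G$ of $R_L(G)$; being a ring automorphism, it sends $\fm R(G)_L$ bijectively onto $I_G R(G)_L$.

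Given this, the inductive step is formal. For $j = 1$: a general element of $\fm \cdot G^G_i(X)_L$ is a finite sum $\sum_m a_m \cdot \beta_m$ with $a_m \in \fm$ (viewed in $R(G)_L$) and $\beta_m \in G^G_i(X)_L$; applying $t_{\wt{\fm}}$ and using Proposition~\ref{prop:match} gives $\sum_m t_{\wt{\fm}}(a_m)\cdot t_{\wt{\fm}}(\beta_m)$, and since $t_{\wt{\fm}}(a_m) \in I_G$ by Lemma~\ref{lem:twist}, this lies in $I_G \cdot G^G_i(X)_L$. Thus $t_{\wt{\fm}}(\fm G^G_i(X)_L) \subseteq I_G G^G_i(X)_L$; applying the same argument to the inverse automorphism $t_{\wt{\fm}}^{-1}$ (which, being induced by multiplication by $\chi(x)^{-1}$ on each $\chi$-eigencomponent, is itself of the same type, inverting the roles of $\fm$ and $I_G$ via the inverse ring automorphism) gives the reverse inclusion, hence equality. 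For general $j$, one writes $\fm^j G^G_i(X)_L$ as the span of products $a_1\cdots a_j \cdot \beta$ with $a_\ell \in \fm$, and iterating Proposition~\ref{prop:match} gives $t_{\wt{\fm}}(a_1\cdots a_j\cdot\beta) = t_{\wt{\fm}}(a_1)\cdots t_{\wt{\fm}}(a_j)\cdot t_{\wt{\fm}}(\beta) \in I_G^j G^G_i(X)_L$; the reverse inclusion again comes from $t_{\wt{\fm}}^{-1}$. (Alternatively one simply notes that $t_{\wt{\fm}}$ is a ring automorphism of $R(G)_L$ carrying $\fm$ to $I_G$, hence $\fm^j$ to $I_G^j$, and then applies the $j=1$ argument with $\fm^j, I_G^j$ in place of $\fm, I_G$.)

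Finally, the completion statement: $\fm$-adic completion of $G^G_i(X)_L$ is the inverse limit of $G^G_i(X)_L / \fm^j G^G_i(X)_L$, and likewise for $I_G$. Since $t_{\wt{\fm}}$ is an $L$-linear automorphism of $G^G_i(X)_L$ taking $\fm^j G^G_i(X)_L$ isomorphically onto $I_G^j G^G_i(X)_L$ for each $j$, it induces compatible isomorphisms on the quotients and hence, passing to the inverse limit, an isomorphism $\wh{t}_{\wt{\fm}} : \wh{G^G_i(X)}_{\fm, L} \xrightarrow{\cong} \wh{G^G_i(X)}_{I_G, L}$, as claimed.

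**Main obstacle.** There is no serious obstacle here: the corollary is a bookkeeping consequence of the two preceding results. The only point requiring a line of care is the observation that $t_{\wt{\fm}}^{-1}$ is of the same form as $t_{\wt{\fm}}$ (so that the reverse inclusion is genuinely symmetric), and that the multiplicativity of Proposition~\ref{prop:match} applies with $t_{\wt{\fm}}$ acting simultaneously on both the ring $R(G)_L$ and the module $G^G_i(X)_L$ — which is exactly how that proposition is stated. One should also make sure to interpret $\fm$ throughout as its image $\fm R(G)_L$ in $R(G)_L = R(G)\otimes_{\Q} L$, which is consistent with the setup in which $L = R(T)/\wt{\fm}$ is the residue field and all the $K$-groups have been base-changed to $L$.
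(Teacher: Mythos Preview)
Your proposal is correct and follows essentially the same route as the paper: the paper's proof is a one-line invocation of Proposition~\ref{prop:match} together with Lemma~\ref{lem:twist}, noting that $t_{\wt{\fm}}$ acts on $R(G)_L$ as a ring automorphism carrying $\fm$ to $I_G$. You have simply spelled out the formal bookkeeping (the inductive step on $j$, the reverse inclusion via $t_{\wt{\fm}}^{-1}$, and the passage to inverse limits) that the paper leaves implicit.
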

\begin{proof} This follows immediately from Proposition~\ref{prop:match}
since the action of $t_{\wt{\fm}}$ on $R(G)_L$ is by a ring automorphism
which takes the maximal ideal $\fm$ to the augmentation ideal $I_G$ by
Lemma~\ref{lem:twist}.
\end{proof} 

{\bf{Proof of Theorem~\ref{thm:TRR}:}}
Since $X$ is smooth, we can replace $G^G_i(X)$ with $K^G_i(X)$ and 
similarly for $X^{\wt{\fm}}$.
By Theorem~\ref{thm:NAL}, the natural map
\[
{\wh{K^G_i(X)}}_{\fm}  \xrightarrow{f^!_{\fm}}
{\wh{K^Z_i(X^{\wt{\fm}})}}_{\ov{\fm}}
\]
is an isomorphism of $\wh{R(G)}_{\fm}$-modules. Combining this with 
Corollary~\ref{cor:TRR**}, we have an isomorphism
\begin{equation}\label{eqn:TR*}
{\wh{K^G_i(X)}}_{{\fm}, L} \xrightarrow{\cong}
{\wh{K^Z_i(X^{\wt{\fm}})}}_{{I_Z}, L}.
\end{equation}
On the other hand, 
\cite[Theorem~1.2]{KV} implies that there is a Chern character map
\begin{equation}\label{eqn:TR*1}
{\wh{K^Z_i(X^{\wt{\fm}})}}_{{I_Z}, L} \xrightarrow{\wh{ch}}
\wh{CH^*_Z(X^{\wt{\fm}},i)}_L
\end{equation}
which is an isomorphism. The theorem now follows from ~\ref{eqn:TR*} and
~\ref{eqn:TR*1} by taking ${ch}^X_{\fm}$ to be 
${\wh{ch}} \circ {f^!_{\fm}}$.
$\hfill \square$
\\
\begin{ack}
This paper has been influenced and motivated in many ways by the work of 
Edidin and Graham \cite{ED3}. It came into being only after the author got 
aware of some of the questions Edidin and Graham had raised in their paper. 
We sincerely thank these authors for sharing these questions through their
paper.
\end{ack} 

School of Mathematics,
Tata Institute Of Fundamental Research, \\
Homi Bhabha Road,
Mumbai, 400005, India. \\ 
Email: amal@math.tifr.res.in  \\
\end{document}